 \theoremstyle{plain}
\newtheorem{theorem}{Theorem}
\newtheorem{corollary}[theorem]{Corollary}
\newtheorem{lemma}[theorem]{Lemma}
\newtheorem{proposition}[theorem]{Proposition}
\theoremstyle{definition}
\newtheorem{definition}{Definition}
\theoremstyle{remark}
\newtheorem{remark}[theorem]{Remark}
\numberwithin{equation}{section}
\numberwithin{theorem}{section}
\newcommand{\bT}{\begin{theorem}}
\newcommand{\eT}{\end{theorem}}
\newcommand{\bProp}{\begin{proposition}}
\newcommand{\eProp}{\end{proposition}}
\newcommand{\bE}{\begin{example}}
\newcommand{\eE}{\end{example}}
\newcommand{\bL}{\begin{lemma}}
\newcommand{\eL}{\end{lemma}}
\newcommand{\bP}{\begin{proof}}
\newcommand{\eP}{\end{proof}}
\newcommand{\bC}{\begin{corollary}}
\newcommand{\eC}{\end{corollary}}
\newcommand{\bD}{\begin{definition}}
\newcommand{\eD}{\end{definition}}
\newcommand{\be}{\begin{enumerate}}
\newcommand{\ee}{\end{enumerate}}
\newcommand{\beqa}{\begin{eqnarray*}}
\newcommand{\eeqa}{\end{eqnarray*}}
\newcommand{\beqaa}{\begin{eqnarray}}
\newcommand{\eeqaa}{\end{eqnarray}}
\newcommand{\ba}{\begin{array}}
\newcommand{\ea}{\end{array}}
\newcommand{\PHI}[2]{\left._{#1}\phi \right._{#2}}
\newcommand{\phiargs}[4]{\left[ \ba{l} #1 \\ #2 \ea; #3 ,#4\right]}
\newcommand{\wpalpha}{\bm{\alpha}}
\newcommand{\wpbeta}{\bm{\beta}}
\newdimen\plusheight
\def\+{\;\lower\plusheight\hbox{$+$}\;}
\newdimen\minusheight
\def\-{\;\lower\minusheight\hbox{$-$}\;}
\newdimen\cdotsheight
\def\cds{\lower\cdotsheight\hbox{$\cdots$}}
\begin{document}
\begin{frontmatter}

\title{Lifting Bailey Pairs to WP-Bailey Pairs}
    \author{James McLaughlin}
\address{Department of Mathematics, 124 Anderson Hall, \\West Chester University, West Chester PA 19383}
\ead{jmclaughl@wcupa.edu}

\author{Andrew V. Sills}
\address{Department of Mathematical Sciences, 203 Georgia Avenue Room 3008,\\ Georgia Southern University, Statesboro, GA 30460-8093}
\ead{ASills@GeorgiaSouthern.edu}

\author{Peter Zimmer}
\address{Department of Mathematics, 124 Anderson Hall, \\West Chester University, West Chester PA 19383}
\ead{pzimmer@wcupa.edu}
%\ead[url]{URL 1}

\begin{keyword}
$q$-Series, Rogers-Ramanujan Type Identities, Bailey chains, Bailey
pairs, WP-Bailey pairs
\end{keyword}

 %\subjclass[2000]{Primary: 33D15. Secondary:11B65, 05A19.}

\date{\today}

\begin{abstract}
A pair of sequences $(\wpalpha_{n}(a,k,q),\wpbeta_{n}(a,k,q))$ such
that\\ $\wpalpha_0(a,k,q)=1$ and
\[
\wpbeta_{n}(a,k,q) = \sum_{j=0}^{n} \frac{(k/a; q)_{n-j}(k;
q)_{n+j}}{(q;q)_{n-j}(aq;q)_{n+j}}\wpalpha_{j}(a,k,q)
\]
is termed a \emph{WP-Bailey Pair}. Upon setting $k=0$ in such a pair
we obtain a \emph{Bailey pair}.

In the present paper we consider the problem of ``lifting" a Bailey
pair to a WP-Bailey pair, and use some of the new WP-Bailey pairs
found in this way to derive some new identities between basic
hypergeometric series and new single sum- and double sum identities
of the Rogers-Ramanujan-Slater type.
\end{abstract}

\end{frontmatter}

%\maketitle

\section{Introduction}

A pair of sequences $\big( \alpha_n (a,q) ,  \beta_n (a,q) \big)$ that satisfy
$\alpha_0 (a,q) = 1$ and
\begin{multline}
\beta_n(a,q) = \sum_{r=0}^n \frac{ \alpha_r (a,q)  }{ (q;q)_{n-r} (aq;q)_{n+r} }
  \\ \label{bpeq}
  = \frac{1}{(aq,q;q)_n} \sum_{r=0}^n \frac{ (q^{-n};q)_r }{ (aq^{n+1};q)_r }
  (-1)^r q^{nr- r(r-1)/2} \alpha_r (a,q)
  \end{multline}
where \begin{align*}
           (a;q)_n &:= (1-a)(1-aq)\cdots (1-aq^{n-1}), \\
          (a_1, a_2, \dots, a_j; q)_n &:= (a_1;q)_n (a_2;q)_n \cdots (a_j;q)_n ,\\
           (a;q)_\infty &:= (1-a)(1-aq)(1-aq^2)\cdots, \mbox{ and }\\
          (a_1, a_2, \dots, a_j; q)_\infty &:= (a_1;q)_\infty (a_2;q)_\infty \cdots (a_j;q)_\infty,
          \end{align*}
is termed a \emph{Bailey pair relative to $a$}. Bailey \cite{B47,
B49} showed that, for such a pair, {\allowdisplaybreaks
\begin{multline}\label{Baileyeq}
\sum_{n=0}^{\infty} (y,z;q)_{n}\left ( \frac{aq}{yz}\right )^{n}
\beta_n (a,q)\\= \frac{(aq/y,aq /z;q)_{\infty}}{ (aq, aq/yz;q)_{\infty}}
\sum_{n=0}^{\infty} \frac{(y,z;q)_{n}}{(aq/y,aq/z;q)_n}\left (
\frac{aq}{yz}\right )^{n} \alpha_n (a,q).
\end{multline}
}
 Slater, in
\cite{S51} and \cite{S52}, subsequently used this transformation
of Bailey to derive 130 identities of the Rogers-Ramanujan type.
Slater's method involved specializing $y$ and $z$ so that the
series on right side of \eqref{Baileyeq} became summable, using
the Jacobi triple product identity.
\begin{equation}\label{JTP}
\sum_{n=-\infty}^{\infty} x^nq^{n^2}=(-q/x,-qx,q^2;q^2)_{\infty}.
\end{equation}

 In
\cite{A01}, Andrews extended the definition of a Bailey pair by
setting\\ $\wpalpha_0(a,k,q)=1$ and {\allowdisplaybreaks
\begin{align}\label{WPpair}
{\wpbeta}_{n}(a,k,q) &= \sum_{j=0}^{n}
\frac{(k/a;q)_{n-j}(k;q)_{n+j}}{(q;q)_{n-j}(aq;q)_{n+j}}{\wpalpha}_{j}(a,k,q)\\
&= \frac{(k/a,k;q)_n}{(aq,q;q)_n}\sum_{j=0}^{n}
\frac{(q^{-n};q)_{j}(kq^n;q)_{j}}{(aq^{1-n}/k;q)_{j}(aq^{n+1};q)_{j}}
\left(\frac{qa}{k}\right)^j\wpalpha_{j}(a,k,q). \notag
\end{align}
} Such a pair $(\wpalpha_{n}(a,k,q),\,\wpbeta_{n}(a,k,q))$ is termed a
\emph{WP-Bailey pair}. Examples of WP Bailey pairs were previously given
by Bressoud \cite{B81a} and Singh \cite{S94}. Note that setting
$k=0$ in a WP-Bailey pair generates a standard Bailey pair, but it
is not necessarily true that all standard Bailey pairs can be
derived in this way (at least not if we insist that the
$\wpbeta_{n}(a,k,q)$ in a WP-Bailey pair be in closed form).

We say that the Bailey pair $(\alpha_n (a,q) , \beta_n(a,q) )$
relative to $a$ \emph{lifts} to the WP-Bailey pair
$(\wpalpha_n(a,k,q), \wpbeta_n(a,k,q))$, or equivalently, that\\
$(\wpalpha_n(a,k,q), \wpbeta_n(a,k,q))$ is a \emph{lift} of the pair
$(\alpha_n(a,q) , \beta_n(a,q) )$, if
\[
\wpalpha_n(a,0,q)=\alpha_n (a,q) , \hspace{25pt}\wpbeta_n(a,0,q)=\beta_n (a,q),
\hspace{25pt}\forall\, n \geq 0.
\]

\begin{remark}
Sometimes it will be convenient to suppress one or more of the parameters
$a$, $k$, or $q$ in the notation of ordinary and WP Bailey pairs.  We shall,
however, always distinguish between ordinary and WP Bailey pairs by
denoting the latter in boldface.
\end{remark}
%\vspace{15pt}

 The following (using slightly different notation) was proved in \cite{McLZ07b}.
\begin{theorem}\label{fbt1}
Let $N$ be a positive integer. Suppose that $\wpalpha_0 =1$ and the
sequences $\{\wpalpha_n\}$ and $\{ \wpbeta_n\}$ are related by
\begin{equation*} \wpbeta_{n} = \sum_{j=0}^{n}
\frac{(k/a;q)_{n-j}(k;q)_{n+j}}{(q;q)_{n-j}(aq;q)_{n+j}}\,\wpalpha_{j}.
\end{equation*}
Then {\allowdisplaybreaks
\begin{multline}\label{fbt1eq}
\sum_{n=0}^{N} \frac{(1-kq^{2n})(y,z,k a q^{N+1}/yz,q^{-N};q)_n
}{(1-k)(k q/y,k q/z,y z q^{-N}/a,k q^{1+N};q)_n}\,q^n \wpbeta_n\\=
\frac{(q k,q k/y z,q a/y,q a/z;q)_{N}} {(q k/y,q k/z,q a,q a/y z;q)_{N}}
\phantom{asdadasdasdabvvbvmvmbnvbnvdasdasd}\\
\times \sum_{n=0}^{N} \frac{(y,z, k a q^{N+1}/y z,q^{-N};q)_{n}}
{(a q/y,a q/z,a q^{1+N}, y z q^{-N}/k;q)_{n}}\left ( \frac{a
q}{k}\right)^n \wpalpha_n.
\end{multline}
}
\end{theorem}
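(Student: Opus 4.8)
The plan is to prove \eqref{fbt1eq} by the classical Bailey-transform strategy: substitute the defining relation for $\wpbeta_n$ into the left side, interchange the order of the two resulting summations, and evaluate the inner sum in closed form by a known very-well-poised summation. After inserting
\[
\wpbeta_n = \sum_{j=0}^{n}\frac{(k/a;q)_{n-j}(k;q)_{n+j}}{(q;q)_{n-j}(aq;q)_{n+j}}\,\wpalpha_j
\]
into the left side of \eqref{fbt1eq} and reversing the order of summation over the triangle $0\le j\le n\le N$, the left side becomes $\sum_{j=0}^{N}S_j\,\wpalpha_j$, where $S_j$ is the inner sum over $n$ from $j$ to $N$. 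The goal is then to show that $S_j$ equals the coefficient of $\wpalpha_j$ on the right side of \eqref{fbt1eq}. Both $S_j$ and that target coefficient will contain the common factor $(y,z,kaq^{N+1}/yz,q^{-N};q)_j$, so a substantial part of each expression may be cancelled before the final comparison.

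The key step is to recognize $S_j$ as a summable series. I would shift the summation index by writing $n=j+m$, split each Pochhammer symbol of the form $(\,\cdot\,;q)_{n\pm j}$ using $(x;q)_{n+j}=(x;q)_{2j}(xq^{2j};q)_m$ and $(x;q)_{n-j}=(x;q)_m$, split $(x;q)_n=(x;q)_j(xq^{j};q)_m$, and factor all $m$-independent quantities out in front. The factor $(1-kq^{2n})/(1-k)$ then rewrites as $\tfrac{1-kq^{2j}}{1-k}\cdot\tfrac{1-kq^{2j}q^{2m}}{1-kq^{2j}}$, which is the signature of a very-well-poised series with base $A=kq^{2j}$. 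A direct check identifies the remaining sum over $m$ (from $0$ to $N-j$) as a terminating very-well-poised ${}_8\phi_7$, namely an ${}_8W_7(A;b,c,d,e,q^{\,j-N};q,q)$ with $A=kq^{2j}$, argument $q$, and upper parameters $b=yq^{j}$, $c=zq^{j}$, $d=kaq^{N+1+j}/yz$, $e=k/a$. One verifies that the bottom parameters $kq^{1+j}/y$, $kq^{1+j}/z$, $yzq^{\,j-N}/a$, $kq^{1+N+j}$, $aq^{2j+1}$ are precisely $Aq/b$, $Aq/c$, $Aq/d$, $Aq/e$, $Aq/f$, confirming the very-well-poised form.

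Because the argument equals $q$ and the balancing condition $A^{2}q^{(N-j)+1}=bcde$ holds (both sides equal $k^{2}q^{3j+N+1}$), Jackson's $q$-analogue of Dougall's theorem, the terminating very-well-poised ${}_8\phi_7$ summation, applies and evaluates $S_j$ as a ratio of $q$-shifted factorials of length $N-j$ times the $j$-dependent prefactors collected above. The final task is to simplify this product and match it against the coefficient
\[
\frac{(qk,qk/yz,qa/y,qa/z;q)_{N}}{(qk/y,qk/z,qa,qa/yz;q)_{N}}\cdot\frac{(y,z,kaq^{N+1}/yz,q^{-N};q)_{j}}{(aq/y,aq/z,aq^{1+N},yzq^{-N}/k;q)_{j}}\Big(\frac{aq}{k}\Big)^{j}
\]
of $\wpalpha_j$ on the right side of \eqref{fbt1eq}. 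I expect this last reconciliation to be the main obstacle: it requires converting factorials of argument $N-j$ into factorials of arguments $N$ and $j$ (via relations such as $(x;q)_{N}=(x;q)_{j}(xq^{j};q)_{N-j}$ and the standard inversion formulas for $(q^{-N};q)_m$), and choosing which three of the four free parameters enter Jackson's right-hand side so as to separate cleanly into the $N$-only prefactor and the stated $j$-dependence. The structural recognition of the ${}_8W_7$ and the verification of the balancing condition are the conceptual core; the concluding factorial bookkeeping, though routine, is where errors are most likely to arise.
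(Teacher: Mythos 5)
Your strategy is the right one, and it is essentially the proof behind this theorem: the paper itself gives no argument (it cites \cite{McLZ07b} and notes the result is a reformulation of Andrews' WP-Bailey lattice construction), and that construction rests on exactly the step you identify, namely interchanging the two sums and evaluating the inner sum by Jackson's terminating very-well-poised ${}_8\phi_7$ summation. Your parameter identifications check out: with $n=j+m$ the inner sum is ${}_8W_7(kq^{2j}; yq^{j}, zq^{j}, kaq^{N+1+j}/yz, k/a, q^{j-N}; q,q)$, the denominator parameters match $Aq/b,\dots,Aq/f$ as you list them, and the balancing condition $bcde=A^2q^{(N-j)+1}=k^2q^{3j+N+1}$ holds, so Jackson's formula applies. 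For the final bookkeeping you flag, the clean choice is to take $b=yq^{j}$, $c=zq^{j}$, $d=k/a$ in the right-hand side of Jackson's sum: then $Aq/bd=aq^{j+1}/y$, $Aq/cd=aq^{j+1}/z$, $Aq/d=aq^{2j+1}$, $Aq/bc=kq/yz$, and each factor of length $N-j$ splits via $(x;q)_{N-j}=(x q^{-j};q)_N/(xq^{-j};q)_j$ or $(xq^{j};q)_{N-j}=(x;q)_{N+j}/(x;q)_{2j}$ into the $N$-only product prefactor and the stated $j$-dependent coefficient of $\wpalpha_j$, including the factor $(aq/k)^j$ which emerges from the inversion $(q^{-N};q)_j/(q^{j-N}\cdot\text{etc.})$ manipulations. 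No gap in the approach; only that last routine verification remains to be written out.
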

This turned out to be a re-formulation of one of the constructions
Andrews \cite{A01} used to generate the \emph{WP-Bailey lattice},
but we were unaware of the connection initially. Upon letting $N \to
\infty$ we get the following result from \cite{McLZ07a}.
\begin{theorem}\label{6t1}
Subject to suitable convergence conditions, if
\begin{equation}\label{6betaneq}
\wpbeta_n=
\sum_{r=0}^{n}\frac{(k/a;q)_{n-r}}{(q;q)_{n-r}}\frac{(k;q)_{n+r}}
{(a q;q)_{n+r}}\wpalpha_{r},
\end{equation}
then {\allowdisplaybreaks
\begin{multline}\label{wpeq}
\sum_{n=0}^{\infty} \frac{(q\sqrt{k},-q\sqrt{k}, y,z;q)_{n}}
{(\sqrt{k},-\sqrt{k}, q k/y,q k/z;q)_{n}}\left( \frac{q a}{y z }\right )^{n} \wpbeta_n =\\
\frac{(q k,q k/yz,q a/y,q a/z;q)_{\infty}} {(q k/y,q k/z,q a,q
a/yz;q)_{\infty}} \sum_{n=0}^{\infty}\frac{(y,z;q)_{n}}{(q a/y ,q
a/z;q)_n}\left (\frac{q a}{y z}\right)^{n}\wpalpha_n.
\end{multline}
}
\end{theorem}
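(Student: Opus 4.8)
The plan is to obtain Theorem~\ref{6t1} as the $N\to\infty$ limit of the finite identity in Theorem~\ref{fbt1}, since the two share the same $\wpalpha$, $\wpbeta$ recurrence and the right-hand prefactors already agree up to replacing $(\cdot;q)_N$ by $(\cdot;q)_\infty$. The first thing I would record is the elementary identity
\[
\frac{(q\sqrt{k},-q\sqrt{k};q)_n}{(\sqrt{k},-\sqrt{k};q)_n}=\frac{(1-\sqrt{k}q^n)(1+\sqrt{k}q^n)}{(1-\sqrt{k})(1+\sqrt{k})}=\frac{1-kq^{2n}}{1-k},
\]
so that the very-well-poised factor $(1-kq^{2n})/(1-k)$ on the left of \eqref{fbt1eq} is exactly the quotient $(q\sqrt{k},-q\sqrt{k};q)_n/(\sqrt{k},-\sqrt{k};q)_n$ appearing on the left of \eqref{wpeq}; the factors $(y,z;q)_n/(kq/y,kq/z;q)_n$ already match. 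What remains is to track the $N$-dependent factors in both summands and in the prefactor.

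The key computation is the limit of the ``$q^{-N}$'' pieces. For fixed $n$ and any nonzero constant $\lambda$ independent of $N$, multiplying numerator and denominator of each factor by $q^{N-i}$ gives
\[
\frac{(q^{-N};q)_n}{(\lambda q^{-N};q)_n}=\prod_{i=0}^{n-1}\frac{q^{N-i}-1}{q^{N-i}-\lambda}\xrightarrow[N\to\infty]{}\prod_{i=0}^{n-1}\frac{-1}{-\lambda}=\lambda^{-n},
\]
while every factor carrying a positive power of $q^{N}$ tends to $1$, namely $(kaq^{N+1}/yz;q)_n,(kq^{1+N};q)_n,(aq^{1+N};q)_n\to 1$. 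Applying this on the left of \eqref{fbt1eq} with $\lambda=yz/a$ converts $(q^{-N};q)_n/(yzq^{-N}/a;q)_n\to(a/yz)^n$, which together with the explicit $q^n$ yields the weight $(aq/yz)^n$ of \eqref{wpeq}. On the right of \eqref{fbt1eq} the same device with $\lambda=yz/k$ gives $(q^{-N};q)_n/(yzq^{-N}/k;q)_n\to(k/yz)^n$, and combining with the factor $(aq/k)^n$ again produces $(aq/yz)^n$; the surviving factors $(y,z;q)_n/(aq/y,aq/z;q)_n$ are precisely those in \eqref{wpeq}. It remains only to take the limit of the prefactor: since $(a;q)_N\to(a;q)_\infty$ for $|q|<1$, the quotient $(qk,qk/yz,qa/y,qa/z;q)_N/(qk/y,qk/z,qa,qa/yz;q)_N$ tends to the corresponding infinite-product prefactor, and assembling the three pieces reproduces \eqref{wpeq} term by term.

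The hard part will be the analytic justification, which is exactly what the phrase ``subject to suitable convergence conditions'' protects. The delicate point is that I must interchange $\lim_{N\to\infty}$ with the (now infinite) summations: the upper limit $N$ of each sum itself grows and each summand depends on $N$, so the passage is not merely termwise. I would supply a bound on the summands that is uniform in $N$ on each side---controlling $|(q^{-N};q)_n/(\lambda q^{-N};q)_n|$ by a constant multiple of $|\lambda|^{-n}$ and the $q^{+N}$ factors by absolute constants---so that a dominated-convergence argument applies once $|aq/yz|$ (together with the growth of $\wpalpha_n$ and $\wpbeta_n$) is restricted enough to force absolute convergence of the limiting series. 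Establishing such a uniform bound, rather than the formal term-by-term limits above, is the real content of the convergence hypotheses.
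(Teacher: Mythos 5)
Your proposal is correct and follows essentially the same route as the paper, which obtains Theorem~\ref{6t1} precisely by letting $N\to\infty$ in Theorem~\ref{fbt1}; your term-by-term limits (the $(1-kq^{2n})/(1-k)$ rewriting, $(q^{-N};q)_n/(\lambda q^{-N};q)_n\to\lambda^{-n}$, and the $q^{+N}$ factors tending to $1$) are exactly what that passage requires. Your additional remarks on justifying the interchange of limit and summation are a more careful treatment of what the paper subsumes under ``subject to suitable convergence conditions.''
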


Notice that, if $\wpalpha_n$ above is independent of $k$, then the
series on the right sides of \eqref{Baileyeq}
 and \eqref{wpeq}  are identical.
 Now suppose that a  standard
Bailey pair as in~\eqref{bpeq} lifts to a WP-Bailey pair in which
$\wpalpha_n$  is independent of $k$. If the standard Bailey pair gives
rise to an identity of the Rogers-Ramanujan-Slater type, for certain
choices of the parameters $y$ and $z$, then it follows that the same
choices for $y$ and $z$ will lead to a generalization of that
identity, since the only occurrence of $k$ on the right side of
\eqref{wpeq} is in the infinite product and the left side of
\eqref{wpeq} will thus also be an infinite product.

In \cite{McLZ07b} we found a WP-Bailey pair that is a lift of
Slater's pair \textbf{F3}.

\begin{theorem}
Define
\begin{align}\label{newwp}
\wpalpha_{n}(1,k)&=
\begin{cases}
1,&n=0,\\
q^{-n/2}+q^{n/2},&n\geq1,
\end{cases}\\
\wpbeta_{n}(1,k)&=
\frac{(k\sqrt{q},k;q)_{n}}{(\sqrt{q},q;q)_{n}}q^{-n/2}. \notag
\end{align}
Then $(\wpalpha_{n}(1,k),\wpbeta_{n}(1,k))$ satisfy \eqref{WPpair}
(with $a=1$).
\end{theorem}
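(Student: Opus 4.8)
The plan is to verify the defining relation \eqref{WPpair} directly by substituting the proposed $\wpalpha_n$ into the sum and checking that it produces the claimed closed form for $\wpbeta_n$. With $a=1$, the relation to verify is
\[
\wpbeta_n(1,k)=\sum_{j=0}^n\frac{(k;q)_{n-j}(k;q)_{n+j}}{(q;q)_{n-j}(q;q)_{n+j}}\,\wpalpha_j(1,k),
\]
so I would plug in $\wpalpha_0=1$ and $\wpalpha_j=q^{-j/2}+q^{j/2}$ for $j\ge 1$, and aim to show the right side equals $\dfrac{(k\sqrt q,k;q)_n}{(\sqrt q,q;q)_n}q^{-n/2}$.

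First I would pass to the second (basic-hypergeometric) form of the WP-Bailey relation given in \eqref{WPpair}, since with $\wpalpha_j$ essentially a sum of two powers $q^{\pm j/2}$ the inner sum should collapse to a recognizable ${}_r\phi_s$. Concretely, setting $a=1$ turns the summand into
\[
\frac{(k;q)_n^2}{(q;q)_n^2}\sum_{j=0}^n\frac{(q^{-n};q)_j(kq^n;q)_j}{(q^{1-n}/k;q)_j(q^{n+1};q)_j}\,q^j\,\wpalpha_j(1,k).
\]
Splitting $\wpalpha_j=q^{-j/2}+q^{j/2}$ (and accounting for the $j=0$ term, where the two pieces would double-count the value $1$) produces two terminating ${}_2\phi_2$-type sums in the variable $q^{\pm 1/2}$. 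The natural tool is the $q$-Pfaff--Saalsch\"utz summation or a terminating ${}_3\phi_2$ evaluation, applied after absorbing the $q^{\pm j/2}$ factors into shifted parameters $\sqrt q$ and $k\sqrt q$; the appearance of $k\sqrt q$ and $\sqrt q$ in the target $\wpbeta_n$ strongly signals that one of the standard terminating summation formulas will fire here.

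An alternative, possibly cleaner, route is to invert the problem: rather than summing $\wpalpha\to\wpbeta$, use the \emph{inverse} of the WP-Bailey matrix. Since the pair $(\wpalpha,\wpbeta)$ is determined by the lower-triangular system, it suffices to check that the proposed closed-form $\wpbeta_n$ recovers the proposed $\wpalpha_n$ under the inversion. The inverse of the WP-Bailey relation is itself known (it is essentially the WP-analogue of the Bailey-pair inversion), and applying it to $\wpbeta_n=\frac{(k\sqrt q,k;q)_n}{(\sqrt q,q;q)_n}q^{-n/2}$ should reduce to a single terminating very-well-poised sum that telescopes or sums by a known formula to give exactly $q^{-n/2}+q^{n/2}$. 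I would try the direct approach first and fall back on inversion if the two split sums do not recombine cleanly.

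\textbf{The main obstacle} I anticipate is the bookkeeping around the $j=0$ term and the half-integer powers of $q$: writing $\wpalpha_j$ uniformly as $q^{-j/2}+q^{j/2}$ for all $j\ge 0$ would count the $n=0$ contribution twice, so I must either correct by subtracting a $\delta_{j,0}$ term or handle $j=0$ separately, and then confirm that the correction is exactly absorbed so that both the direct summation and the target formula agree at $n=0$ (where both sides equal $1$). The other delicate point is matching the precise shifted parameters — ensuring that the $q^{\mp 1/2}$ shifts convert the summation variable correctly so that the known ${}_3\phi_2$ or Saalsch\"utz evaluation applies and yields the factor $(k\sqrt q;q)_n/(\sqrt q;q)_n$ rather than some near-miss; this is routine but error-prone, and is where I would spend the most care.
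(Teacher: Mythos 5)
Your overall plan (direct verification of \eqref{WPpair} with $a=1$) is viable, but the key summation step as you describe it would fail. If you split $\wpalpha_j=q^{-j/2}+q^{j/2}$ into its two powers, each resulting sum is a terminating series $\sum_j\frac{(q^{-n},kq^{n},q;q)_j}{(q,q^{1-n}/k,q^{n+1};q)_j}z^j$, i.e.\ a ${}_3\phi_2$ with argument $z=q^{1/2}/k$ (respectively $q^{3/2}/k$). Neither is balanced --- the Saalsch\"utz conditions would force $z=q$ and $k^2=1$ --- and for generic $k$ neither piece has a product evaluation on its own; only the combination does. The mechanism that actually makes the sum collapse is that $q^{-j/2}+q^{j/2}=q^{-j/2}(1+q^{j})$, and $1+q^{j}$ (interpreted as $1$ at $j=0$, which also disposes of your $j=0$ double-counting worry) is exactly the very-well-poised factor $\lim_{a\to1}\frac{(1-aq^{2j})(a;q)_j}{(1-a)(q;q)_j}$. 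Keeping the two powers together turns the second form of \eqref{WPpair} into a single terminating very-well-poised ${}_6\phi_5$ with $a=1$, $b=kq^{n}$, $c=\sqrt{q}$, and Rogers'/Jackson's summation (the terminating companion of \eqref{6phi5eq}) evaluates it to $\frac{(q,\,q^{1/2-n}/k;q)_n}{(q^{1-n}/k,\,\sqrt{q};q)_n}$; multiplying by the prefactor $(k,k;q)_n/(q,q;q)_n$ and reversing the shifted factorials $(q^{1/2-n}/k;q)_n$ and $(q^{1-n}/k;q)_n$ gives precisely $\frac{(k\sqrt{q},k;q)_n}{(\sqrt{q},q;q)_n}q^{-n/2}$. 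So the repair is to replace ``two sums summed by $q$-Saalsch\"utz'' with ``one very-well-poised ${}_6\phi_5$.''

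For comparison, the paper does not verify the relation termwise at all: the pair \eqref{newwp} reappears as \eqref{F3'ab} and is obtained there as the specialization $a=1$, $c=\sqrt{q}$, $d\to\infty$ of Singh's WP-Bailey pair \eqref{wpAB}, whose validity encodes the same ${}_6\phi_5$ summation. Your inversion fallback is legitimate in principle (the WP-Bailey inversion is known), but it is not carried out, so as written the proposal still rests on the unsupported Saalsch\"utz step.
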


The substitution of this pair into Theorem \ref{fbt1} leads to the
following corollary.
\begin{corollary}
\begin{multline}\label{q1/2pair}
 _{8} \phi _{7} \left [
\begin{matrix}
k,q\sqrt{k}, -q\sqrt{k},y,z,
k\sqrt{q}, kq^{1+N}/yz,q^{-N}\\
\sqrt{ k},-\sqrt{k}, q k/y,q k/z,\sqrt{q},k q^{1+N}, y z q^{-N}
\end{matrix}
; q,\sqrt{q} \right ]\\=
\frac{(qk,qk/yz,q/y,q/z;q)_{N}} {(qk/y,qk/z,q,q/yz;q)_{N}}\\
\times \left( 1+\sum_{n=0}^{N}\frac{(1+q^{n})(y,z,
kq^{1+N}/yz,q^{-N};q)_{n}}{(q/y,q/z, q^{1+N},y z
q^{-N}/k;q)_n}\left (\frac{\sqrt{q} }{k}\right)^{n} \right).
\end{multline}
\begin{equation}\label{q1/2pairpr}
\sum_{n=0}^{\infty} \frac{ (1-k q^{4n})(k;q)_{2n}q^{2n^2-n} }{
(1-k)(q;q)_{2n}}= \frac{(kq^2;q^2)_{\infty}}{(q;q^2)_{\infty} }.
\end{equation}
\begin{equation}\label{q1/2pairpr2}
\sum_{n=0}^{\infty} \frac{ (1-k q^{2n})(k;q)_{n}(-1)^nq^{n(n-1)/2}
}{ (1-k)(q;q)_{n}}= 0.
\end{equation}
\begin{equation}\label{q1/2pairpr3}
\sum_{n=0}^{\infty} \frac{ (1-k
q^{4n})(-q;q^2)_n(k;q)_{2n}q^{n^2-n} }{
(1-k)(-kq;q^2)_n(q;q)_{2n}}=
\frac{(kq^2,-1;q^2)_{\infty}}{(-kq,q;q^2)_{\infty} }.
\end{equation}
\end{corollary}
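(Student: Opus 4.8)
The plan is to handle the master identity \eqref{q1/2pair} by direct substitution and the three summation formulas \eqref{q1/2pairpr}--\eqref{q1/2pairpr3} by a further specialization that exploits the $k$-independence of $\wpalpha_n$. For \eqref{q1/2pair} I would insert the pair \eqref{newwp} (with $a=1$) into the transformation \eqref{fbt1eq} of Theorem \ref{fbt1}. On the left, substituting $\wpbeta_n$ and combining $q^n q^{-n/2}=(\sqrt q)^n$ with the very-well-poised simplification
\[
\frac{(q\sqrt k,-q\sqrt k;q)_n}{(\sqrt k,-\sqrt k;q)_n}=\frac{1-kq^{2n}}{1-k}
\]
gathers the factors into the single ${}_8\phi_7$ on the left of \eqref{q1/2pair}. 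On the right I set $a=1$, use $\wpalpha_0=1$, and for $n\ge1$ write $(q/k)^n\wpalpha_n=(1+q^n)(\sqrt q/k)^n$; pulling out the $n=0$ term (which is $1$, not the value $2$ that $q^{-n/2}+q^{n/2}$ returns at $n=0$) yields the displayed ``$1+\sum$'' form. This step is purely mechanical.

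For \eqref{q1/2pairpr}--\eqref{q1/2pairpr3} I would instead substitute \eqref{newwp} into the limiting transformation \eqref{wpeq} of Theorem \ref{6t1}. The left side again collapses, via the same very-well-poised identity, to a sum carrying $(1-kq^{2n})/(1-k)$. The decisive point is that $\wpalpha_n$ in \eqref{newwp} does not involve $k$; hence, as noted after Theorem \ref{6t1}, the right side of \eqref{wpeq} factors as the $k$-product $(qk,qk/yz;q)_\infty/(qk/y,qk/z;q)_\infty$ times precisely the $k$-free series occurring on the right of Bailey's transformation \eqref{Baileyeq} for Slater's pair \textbf{F3}. I would then specialize $y$ and $z$ (allowing $y,z\to\infty$ and the values $\pm\sqrt q$) as in the classical derivation of the three Rogers--Ramanujan--Slater identities attached to \textbf{F3}: each choice makes the $k$-free series summable by the Jacobi triple product \eqref{JTP}, while the surviving $k$-product supplies the generalization. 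In particular $y=\sqrt q,\ z\to\infty$ cancels the factor $(k\sqrt q;q)_n/(\sqrt q;q)_n$ inherited from $\wpbeta_n$ and reduces the $k$-free series to an alternating theta sum that telescopes to $0$, giving \eqref{q1/2pairpr2}; the other two choices deliver the base-$q^2$ products on the right of \eqref{q1/2pairpr} and \eqref{q1/2pairpr3}.

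The assembly of the ${}_8\phi_7$ and the rewriting of the left sides are routine. The main obstacle will be the right sides of \eqref{q1/2pairpr}--\eqref{q1/2pairpr3}: choosing the precise values of $y$ and $z$ and the order of the limits so that both sides converge, confirming that the residual $k$-free series is genuinely summable by \eqref{JTP}, and in particular engineering the even-index reduction that converts $(1-kq^{2n})$ into $(1-kq^{4n})$ and the base-$q$ products into the base-$q^2$ products of \eqref{q1/2pairpr} and \eqref{q1/2pairpr3}. Verifying the vanishing in \eqref{q1/2pairpr2} and tracking the half-integer powers $q^{\pm n/2}$ throughout are where sign and exponent slips are most likely.
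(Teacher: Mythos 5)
Your proposal matches the paper's proof: \eqref{q1/2pair} is obtained by direct substitution of the pair \eqref{newwp} into Theorem \ref{fbt1} with $a=1$, and the three summations follow from the $N\to\infty$ limit (i.e.\ Theorem \ref{6t1}) with exactly the specializations you name --- $y,z\to\infty$ together with $q\mapsto q^2$ for \eqref{q1/2pairpr}, $y=\sqrt q$, $z\to\infty$ for \eqref{q1/2pairpr2}, and $y=-\sqrt q$, $z\to\infty$ for \eqref{q1/2pairpr3} --- each finished by the Jacobi triple product \eqref{JTP}. Your handling of the $n=0$ term (so that the right side reads $1+\sum$) and of the $k$-independence of $\wpalpha_n$ is consistent with the paper's intent, so this is essentially the same argument.
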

\begin{proof}
The identity at \eqref{q1/2pair} is immediate, while
\eqref{q1/2pairpr} follows upon letting $N \to \infty$, replacing
$q$ by $q^2$, letting $y, z \to \infty$ and finally using
\eqref{JTP} to sum the right side. The identity at
\eqref{q1/2pairpr2} is a consequence of letting $N \to \infty$,
setting $y=\sqrt{q}$, letting $z \to \infty$ and again using
\eqref{JTP} to sum the right side, and \eqref{q1/2pairpr3} follows
similarly, except we set $y=-\sqrt{q}$ instead.
\end{proof}
Of course the last three identities are not new, as all can easily
be seen to arise as special cases of \eqref{6phi5eq}. However, they
do illustrate how a lift of a standard Bailey pair leads to
generalizations of identities arising from this standard pair (the
identities given by setting $k=0$ in the corollary above).

The discovery of the WP-Bailey pair at \eqref{newwp} which is a lift
of Slater's Bailey pair \textbf{F3}, together with the observation
following Theorem \ref{6t1}, motivated us to investigate if other
standard Bailey pairs could be lifted to a WP-Bailey pair, and to
see what new transformations of basic hypergeometric series and what
new identities of the Rogers-Ramanujan-Slater type would follow from
these new WP-Bailey pairs.

It turned out that several of the lifts of Bailey pairs that we
found could be derived as special cases of a result (see
\eqref{wpAB} below) of Singh \cite{S94} (see also Andrews and
Berkovich \cite{AB02}) . However, several others were not so
easily explained, and in attempting to prove that some of these
pairs that were found experimentally were indeed WP-Bailey pairs,
we were led to consider various elementary ways of deriving new
WP-Bailey pairs from existing pairs (ways that are different from
those described by Andrews in \cite{A01}).

We also describe various ways in which double-sum identities of the
Rogers -Ramanujan type identities may be easily derived from
WP-Bailey pairs.

The second author defined three ``multiparameter Bailey pairs"
in~\cite{S07}, which, together with certain families of
$q$-difference equations, ``explain" more than half of the 130
Rogers-Ramanujan type identities in Slater's paper~\cite{S52}. These
multiparameter Bailey lift in a natural way to WP Bailey pairs,
which in turn easily yield a variety of single and double-sum WP
generalizations of Rogers-Ramanujan type identities.

\section{WP-Bailey pairs arising from standard Bailey pairs}
We first tried inserting the $\alpha_n$ that were part of Bailey
pairs found by Slater \cite{S51, S52} into \eqref{WPpair}, and
checking experimentally if the resulting $\beta_n(a,k)$ had closed
forms. As a result, the following WP-Bailey pairs were found. The
letter-number combination (e.g. \textbf{E7'}) refers to the standard
pair in Slater's papers \cite{S51} and \cite{S52}  recovered by
setting $k=0$. In all cases it is to be understood that
$\wpalpha_0=\wpbeta_0=1$.
%{\allowdisplaybreaks
\begin{align}\label{E7'ab}
\wpalpha_n(q,k)&=\frac{(-1)^n (q^{-n}-q^{n+1})}{(1-q)},&&\phantom{asdadadsadaasdadsdsdasd}(\textbf{E7'})& \\
\wpbeta_n(q,k)&=\frac{(-1)^n(k^2;q^2)_n}{q^n(q^2;q^2)_n}. \notag
\end{align}
%}
\begin{align}\label{F3'ab}
\wpalpha_n(1,k)&=q^{-n/2}+q^{n/2}, &&\phantom{asdadadsadasdasasdsdasd}(\textbf{F3'})&\\
\wpbeta_n(1,k)&=\frac{(k, kq^{1/2};q)_n}{(q^{1/2},q;q)_n}\,q^{-n/2}.
\notag
\end{align}
\begin{align}\label{F4'ab}
\wpalpha_n(q,k)&=\frac{q^{-n/2}+q^{n/2+1/2}}{1+q^{1/2}},  &&\phantom{asadadadsasasadasadasd}(\textbf{F4'})&\\
\wpbeta_n(q,k)&=\frac{(k, k q^{-1/2};q)_n}{(q^{3/2},q;q)_n}\,q^{-n/2}.
\notag
\end{align}
\begin{align}\label{H3'ab}
\wpalpha_n(1,k)&=(-1)^n q^{-n^2/2}\left(q^{-3 n/2}+q^{3 n/2}\right), &&\phantom{asdadasddaasd}(\textbf{H3'})&\\
\wpbeta_n(1,k)&=\frac{(-1)^n (1-kq^n+kq^{2n})
(k;q)_{n}}{q^{(n^2+3n)/2}(q;q)_n}. \notag
\end{align}
\begin{align}\label{H4'ab}
\wpalpha_n(1,k)&=(-1)^n q^{-n^2/2}\left(q^{-n/2}+q^{n/2}\right),&&\phantom{asdadadsaasdasd}(\textbf{H4'})&\\
\wpbeta_n(1,k)&=\frac{(-1)^n (k;q)_{n}}{q^{(n^2+n)/2}(q;q)_n}. \notag
\end{align}
\begin{align}\label{H5'ab}
\wpalpha_n(1,k)&=q^{-n}-q^{n}, &&\phantom{asdadadsadadasd}(\textbf{H5'})&\\
\wpbeta_n(1,k)&=\frac{ (k,k;q)_{n}(1-2 k q^n + k q^{2 n})}
{(1-k)q^{n}(q,q;q)_n}. \notag
\end{align}
\begin{align}\label{H6'ab}
\wpalpha_n(1,k)&=0, &&\phantom{asdadadsadaasdadsdaassasddsdasd}(\textbf{H6'})&\\
\wpbeta_n(1,k)&=\frac{ (k,k;q)_{n}}{(q,q;q)_n}. \notag
\end{align}
\begin{align}\label{H7'ab}
\wpalpha_n(1,k)&=2(-1)^n, &&\phantom{asdadadsadaasdadssdsasdasd}(\textbf{H7'})&\\
\wpbeta_n(1,k)&=(-1)^n\,\frac{ (k^2;q^2)_{n}}{(q^2;q^2)_n}. \notag
\end{align}
\begin{align}\label{H8'ab}
\wpalpha_n(1,k)&=(-1)^n(q^{-n}+q^{n}), &&\phantom{asdadadsadaasddasd}(\textbf{H8'})&\\
\wpbeta_n(1,k)&=(-1)^n\frac{ (k^2;q^2)_{n} (1+kq^{2n})}
{(1+k)q^n(q^2;q^2)_n}. \notag
\end{align}
\begin{align}\label{H12'ab}
\wpalpha_n(q,k)&=q^{-n}-q^{n+1}, &&\phantom{ad}(\textbf{H12'})&\\
\wpbeta_n(q,k)&=\frac{ (k;q)_{n} (k;q)_{n-1}(1-k q^{n-1}-k q^n+k
q^{2n})} {q^n(q,q^2;q)_n}. \notag
\end{align}
\begin{align}\label{H13'ab}
\wpalpha_n(q,k)&=0, &&\phantom{aasdfasdsasdfgasaasdfasasdfd}(\textbf{H13'})&\\
\wpbeta_n(q,k)&=\frac{ (k,k/q;q)_{n} } {(q,q^2;q)_n}. \notag
\end{align}
\begin{align}\label{H17'ab}
\wpalpha_n(1,q)&=(-1)^n (1+q^n)q^{(n^2-n)/2},&&\phantom{asdadadsadaisadsd}(\textbf{H17'})&\\
\wpbeta_n(1,q)&=\frac{ (-1)^n(k;q)_{n} q^{n(n-1)/2}k^n} {(q;q)_n}.
\notag
\end{align}
It turns out that eight of these are special cases of a more
general WP-Bailey pair. In attempting to prove that these were
indeed WP-Bailey pairs, we observed  that the following WP-Bailey
pair of Singh (\cite{S94} (see also Andrews and Berkovich
\cite{AB02}),
\begin{align}\label{wpAB}
\wpalpha_n'&=\frac{(1-a
q^{2n})(a,c,d,a^2q/kcd;q)_n}{(1-a)(q,aq/c,aq/d,
kcd/a;q)}\left( \frac{k}{a}\right)^n,\\
\wpbeta_n'&=\frac{(kc/a,kd/a,k,aq/cd;q)_n}{(aq/c,aq/d,q,kcd/a;q)_n},
\notag
\end{align}
is a lift of a standard Bailey pair of Slater \cite[Equation (4.1),
page 469]{S51}, {\allowdisplaybreaks
\begin{align}\label{wpS}
\alpha_n&=\frac{(1-a q^{2n})(a,c,d;q)_n}{(1-a)(aq/c,aq/d,
q;q)_n}\left( \frac{-a}{cd}\right)^n q^{(n^2+n)/2},\\
\beta_n&=\frac{(aq/cd;q)_n}{(aq/c,aq/d,q;q)_n}. \notag
\end{align}}
\begin{remark} (1) We have replaced the $\rho_1$ and $\rho_2$ in
\cite{AB02} with $c$ and $d$, to maintain consistency with
Slater's notation in \cite{S51}.

(2) Slater did not state the pair \eqref{wpS} explicitly as a Bailey
pair, but instead listed many special cases (see Tables \textbf{B},
\textbf{F}, \textbf{E} and \textbf{H} in \cite{S51}).
\end{remark}

By making the correct choices for $a$, $c$ and $d$, it can be shown
that the WP-Bailey pairs at \eqref{E7'ab},  \eqref{F3'ab},
\eqref{F4'ab}, \eqref{H4'ab}, \eqref{H6'ab}, \eqref{H7'ab} ,
\eqref{H13'ab} and \eqref{H17'ab} are all special cases of
\eqref{wpS}. The proofs for the remaining pairs do not follow in the
same way, because the corresponding standard Bailey pairs are
\emph{not} derived by substituting directly into \eqref{wpS}. It is
necessary to first derive some other preliminary results.

\begin{theorem}\label{wpt1} Suppose that
$(\wpalpha_n(a),\wpbeta_n(a,k))$ is a WP-Bailey pair such that
$\wpalpha_n(a)$ is independent of\, $k$. Then
$(\wpalpha_n^*(a),\wpbeta_n^*(a,k))$ is a WP-Bailey pair, where
{\allowdisplaybreaks
\begin{align*}
\wpalpha_n^*(a) &= (aq^n+q^{-n})\wpalpha_n(a)\\
\wpbeta_n^*(a,k) &=
%\displaystyle{
\frac{(1+aq^{2n})\wpbeta_n(a,k)-(1-k)\left(1-\frac{k}{a}\right)\wpbeta_{n-1}(a,kq)}{q^{n}}
%}
-a\displaystyle{\frac{(k,k/a;q)_{n}}{(aq,q;q)_{n}}}.\\
\end{align*}
}
\end{theorem}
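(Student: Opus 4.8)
The plan is to verify directly that the proposed pair $(\wpalpha_n^*,\wpbeta_n^*)$ satisfies the defining relation \eqref{WPpair}, using the assumption that $(\wpalpha_n,\wpbeta_n)$ already does. That is, I must show
\[
\wpbeta_n^*(a,k) = \sum_{j=0}^{n}\frac{(k/a;q)_{n-j}(k;q)_{n+j}}{(q;q)_{n-j}(aq;q)_{n+j}}\,\wpalpha_j^*(a),
\]
where $\wpalpha_j^*(a)=(aq^j+q^{-j})\wpalpha_j(a)$. The natural first move is to split the factor $aq^j+q^{-j}$ and write the right-hand side as a sum of two pieces, one weighted by $aq^j$ and one by $q^{-j}$. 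I would like to reassemble each piece into a WP-Bailey $\beta$-sum for the original $\wpalpha$, but with the parameters shifted, so that I can invoke the known relation \eqref{WPpair} (possibly with $k$ replaced by $kq$, which is where the $\wpbeta_{n-1}(a,kq)$ term on the right must come from).

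First I would record the behavior of the coefficient $c_{n,j}:=\dfrac{(k/a;q)_{n-j}(k;q)_{n+j}}{(q;q)_{n-j}(aq;q)_{n+j}}$ under multiplication by $aq^j$ and by $q^{-j}$. The key algebraic identities are the standard Pochhammer recurrences, e.g. $(k;q)_{n+j}=(1-kq^{n+j-1})(k;q)_{n+j-1}$ and $(k/a;q)_{n-j}=(1-k/a)(kq/a;q)_{n-j-1}$, together with $q^{-j}=q^{-n}\,q^{n-j}$ and similar rebalancing. The idea is that multiplying $c_{n,j}$ by $aq^j$ (resp.\ $q^{-j}$) and telescoping should produce, up to an explicit prefactor, the coefficient $c_{n,j}'$ that appears in the $\wpbeta$-sum for the \emph{shifted} parameter $kq$ at index $n-1$, plus a boundary correction. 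Concretely, I expect the manipulation to yield something of the shape
\[
\sum_{j=0}^{n} c_{n,j}\,(aq^j+q^{-j})\,\wpalpha_j
= \frac{(1+aq^{2n})}{q^{n}}\,\wpbeta_n(a,k)
-\frac{(1-k)(1-k/a)}{q^{n}}\,\wpbeta_{n-1}(a,kq)
-a\,\frac{(k,k/a;q)_n}{(aq,q;q)_n},
\]
which is exactly $\wpbeta_n^*(a,k)$ as defined. The term $\wpbeta_{n-1}(a,kq)$ is the signature of the shift $k\mapsto kq$, and here I use crucially that $\wpalpha_j$ is independent of $k$, so that the \emph{same} $\alpha$-sequence serves as the WP-Bailey $\alpha$ relative to the parameter $kq$ as well.

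The main obstacle will be the bookkeeping in the telescoping step: tracking how the endpoints $j=0$ and $j=n$ behave, since shifting the summation index and reindexing to match the $n-1$ sum will leave boundary terms, and it is precisely these that must collapse into the closed-form subtracted term $-a\,(k,k/a;q)_n/(aq,q;q)_n$. I would isolate the $j=n$ contribution (which carries the $(k;q)_{2n}$ factor and no $(k/a)$-denominator obstruction) separately, handle the generic $0\le j\le n-1$ terms by the recurrence-and-reindex argument, and then check that the leftover pieces combine correctly; the factor $(1+aq^{2n})$ should emerge from recombining the two split sums at the matched index. A useful sanity check throughout is to set $k=0$ and confirm that the identity degenerates to the corresponding statement for ordinary Bailey pairs, which pins down the normalizations of the boundary terms.
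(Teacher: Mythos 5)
Your strategy is correct and matches the paper's proof in all essentials: both rest on the factorization $q^{n}(aq^{r}+q^{-r})=(1+aq^{2n})-(1-aq^{n+r})(1-q^{n-r})$, which together with the Pochhammer recurrences turns the second piece into $(1-k)(1-k/a)$ times the WP coefficient of the shifted pair (index $n-1$, parameter $kq$) --- exactly where the $k$-independence of $\wpalpha_n$ is used --- while the $j=0$ normalization $\wpalpha_0^{*}=1$ versus the literal value $1+a$ accounts for the subtracted term $a\,(k,k/a;q)_{n}/(aq,q;q)_{n}$. The only difference is direction: the paper expands $\wpbeta_{n}(a,kq)$ and solves for $\wpbeta_{n+1}^{*}$, whereas you expand $\sum_{j}c_{n,j}\wpalpha_{j}^{*}$ directly; this is the same computation read backwards, and no telescoping or special treatment of the $j=n$ term is actually needed since the factor $(1-q^{n-j})$ annihilates it.
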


\bP From the definition of a WP-Bailey pair,
{\allowdisplaybreaks
\begin{align*} \wpbeta_n(a,kq) &= \sum_{r=0}^n \frac{(kq;q)_{n+r}
(kq/a;q)_{n-r}}
{(aq;q)_{n+r}(q;q)_{n-r}}\wpalpha_r(a)\\
&= \sum_{r=0}^{n+1} \frac{(k;q)_{n+1+r}
(k/a;q)_{n+1-r}(1-aq^{n+1+r})(1-q^{n+1-r})}
{(1-k)(1-k/a)(aq;q)_{n+1+r}(q;q)_{n+1-r}}\wpalpha_r(a)\\
&=\frac{(1+aq^{2n+2})}{(1-k)(1-k/a)} \sum_{r=0}^{n+1}
\frac{(k;q)_{n+1+r} (k/a;q)_{n+1-r}}
{(aq;q)_{n+1+r}(q;q)_{n+1-r}}\wpalpha_r(a)\\
&-\frac{q^{n+1}}{(1-k)(1-k/a)} \sum_{r=0}^{n+1}
\frac{(k;q)_{n+1+r} (k/a;q)_{n+1-r}}
{(aq;q)_{n+1+r}(q;q)_{n+1-r}}\wpalpha_r(a)(aq^r+q^{-r})\\
&=\frac{(1+aq^{2n+2})}{(1-k)(1-k/a)}\wpbeta_{n+1}(a,k)\\
&-\frac{q^{n+1}}{(1-k)(1-k/a)} \left[a\frac{(k;q)_{n+1}
(k/a;q)_{n+1}}{(aq;q)_{n+1}(q;q)_{n+1}} +
\wpbeta_{n+1}^*(a,k)\right].
\end{align*} }
The result follows upon replacing $n$ with $n-1$.\eP

The following corollary is immediate, upon setting $k=0$.
\begin{corollary}\label{cor1}
Suppose that $(\alpha_n,\beta_n)$ is a Bailey pair relative to
$a$. Then so is $(\alpha_n^*,\beta_n^*)$, where \begin{align*}
\alpha^*_n &= (aq^{n}+q^{-n})\alpha_n\\
\beta^*_n &=\frac{(1+aq^{2n})\beta_n
 -\beta_{n-1}}{q^{n}}-\frac{a}{(aq,q;q)_{n}}.
\end{align*}
\end{corollary}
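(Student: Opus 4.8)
The plan is to obtain Corollary \ref{cor1} as a direct specialization of Theorem \ref{wpt1} by setting $k=0$, so the work reduces to carefully evaluating each piece of the WP-Bailey construction at $k=0$. First I would note that when $k=0$, the WP-Bailey pair relation \eqref{WPpair} collapses to the ordinary Bailey pair relation \eqref{bpeq}: the factors $(k/a;q)_{n-j}$ and $(k;q)_{n+j}$ in the numerator both become $(0;q)_m = 1$, so $\wpbeta_n(a,0) = \beta_n$ exactly when $\wpalpha_n(a) = \alpha_n$. Thus a WP-Bailey pair specializes to a Bailey pair under $k=0$, and the hypothesis that $\wpalpha_n(a)$ is independent of $k$ is exactly what lets us write $\wpalpha_n(a) = \alpha_n$ unambiguously.

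Next I would specialize the two formulas defining $(\wpalpha_n^*,\wpbeta_n^*)$. The alpha-part is immediate: $\wpalpha_n^*(a) = (aq^n + q^{-n})\wpalpha_n(a)$ becomes $\alpha_n^* = (aq^n + q^{-n})\alpha_n$ with no dependence on $k$ at all. For the beta-part I would set $k=0$ in
\[
\wpbeta_n^*(a,k) = \frac{(1+aq^{2n})\wpbeta_n(a,k) - (1-k)\bigl(1 - \tfrac{k}{a}\bigr)\wpbeta_{n-1}(a,kq)}{q^n} - a\,\frac{(k,k/a;q)_n}{(aq,q;q)_n}.
\]
The factor $(1-k)(1-k/a)$ becomes $1$; the shifted argument $\wpbeta_{n-1}(a,kq)$ becomes $\wpbeta_{n-1}(a,0) = \beta_{n-1}$ since $kq \to 0$ as $k \to 0$; and in the final term $(k;q)_n(k/a;q)_n \to 1$ (for $n \geq 1$), giving $a/(aq,q;q)_n$. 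Assembling these yields exactly the claimed $\beta_n^*$.

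The one point requiring care — and the only place I would anticipate a minor obstacle — is the boundary behavior of the final term $a(k,k/a;q)_n/(aq,q;q)_n$ and the overall consistency at $n=0$. For $n \geq 1$ the Pochhammer product $(k,k/a;q)_n$ contains the factor $(1-k)(1-k/a)$, which vanishes at $k=0$ in the WP case but whose value at $k=0$ is simply $1$ after the limit; one must check that the stated Corollary formula is the correct $k=0$ evaluation rather than something that degenerates. I would verify that the resulting pair genuinely satisfies \eqref{bpeq} by appealing to Theorem \ref{wpt1}: since $(\wpalpha_n^*,\wpbeta_n^*)$ is proved there to be a WP-Bailey pair for all admissible $k$, and since setting $k=0$ in any WP-Bailey pair produces a Bailey pair relative to $a$ (as observed above), the specialized pair $(\alpha_n^*,\beta_n^*)$ is automatically a Bailey pair. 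No independent verification of the Bailey relation is needed — the corollary is genuinely immediate once the $k=0$ substitutions in the three defining expressions are performed correctly.
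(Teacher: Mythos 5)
Your proposal is correct and takes exactly the paper's route: the paper's entire justification is the sentence ``The following corollary is immediate, upon setting $k=0$'' after Theorem \ref{wpt1}, and your careful evaluation of each piece at $k=0$ (noting $(k;q)_m \to 1$, $(1-k)(1-k/a)\to 1$, and $\wpbeta_{n-1}(a,kq)\to\beta_{n-1}$) is just a fleshed-out version of that same specialization.
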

The following theorem follows directly from the definition of a
WP-Bailey pair, so the proof is omitted.

\begin{theorem}\label{wpt2}
Let $(\wpalpha_n^{(1)}(a,k), \wpbeta_n^{(1)}(a,k))$ and
$(\wpalpha_n^{(2)}(a,k), \wpbeta_n^{(2)}(a,k))$ be WP-Bailey pairs and
let $c_1$ and $c_2$ be constants. Then   $(\wpalpha_n(a,k),
\wpbeta_n(a,k))$ is a WP-Bailey pair, where
\begin{align*}
\wpalpha_n(a,k)&=c_1\wpalpha_n^{(1)}(a,k)+c_2\wpalpha_n^{(2)}(a,k),\\
\wpbeta_n(a,k)&=c_1\wpbeta_n^{(1)}(a,k)+c_2\wpbeta_n^{(2)}(a,k)
+(1-c_1-c_2)\frac{(k,k/a;q)_{n}}{(aq,q;q)_{n}}.
\end{align*}
\end{theorem}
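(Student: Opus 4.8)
The plan is to verify directly that the proposed pair $(\wpalpha_n(a,k),\wpbeta_n(a,k))$ satisfies the defining relation \eqref{WPpair}. That is, I must check that
\[
\wpbeta_n(a,k) = \sum_{j=0}^{n} \frac{(k/a;q)_{n-j}(k;q)_{n+j}}{(q;q)_{n-j}(aq;q)_{n+j}}\,\wpalpha_j(a,k),
\]
where $\wpalpha_j = c_1\wpalpha_j^{(1)} + c_2\wpalpha_j^{(2)}$ and $\wpbeta_n$ is as stated. The key observation is that the map $\wpalpha \mapsto \wpbeta$ given by the summation kernel is \emph{linear}, so substituting the linear combination $c_1\wpalpha_j^{(1)} + c_2\wpalpha_j^{(2)}$ immediately produces $c_1\wpbeta_n^{(1)} + c_2\wpbeta_n^{(2)}$ on the right-hand side. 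This is the entire content of the argument; everything else is bookkeeping to account for the normalization convention $\wpalpha_0 = 1$.

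First I would write out the right-hand sum with the combined $\wpalpha_j$ and split it by linearity into
\[
c_1\sum_{j=0}^{n} \frac{(k/a;q)_{n-j}(k;q)_{n+j}}{(q;q)_{n-j}(aq;q)_{n+j}}\wpalpha_j^{(1)}
+ c_2\sum_{j=0}^{n} \frac{(k/a;q)_{n-j}(k;q)_{n+j}}{(q;q)_{n-j}(aq;q)_{n+j}}\wpalpha_j^{(2)}.
\]
Since each of $(\wpalpha^{(i)},\wpbeta^{(i)})$ is by hypothesis a WP-Bailey pair, these two sums equal $\wpbeta_n^{(1)}$ and $\wpbeta_n^{(2)}$ respectively, giving $c_1\wpbeta_n^{(1)}(a,k) + c_2\wpbeta_n^{(2)}(a,k)$.

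The only subtlety — and the step I expect to require the most care — is the normalization term $(1-c_1-c_2)\frac{(k,k/a;q)_n}{(aq,q;q)_n}$. This arises because the definition of a WP-Bailey pair requires $\wpalpha_0 = 1$, but the raw linear combination has $\wpalpha_0 = c_1\wpalpha_0^{(1)} + c_2\wpalpha_0^{(2)} = c_1 + c_2$, which need not equal $1$. To repair this, I would note that the constant pair $\wpalpha_n = \delta_{n,0}$ corresponds (via the defining kernel with only the $j=0$ term surviving) to $\wpbeta_n = \frac{(k/a;q)_n(k;q)_n}{(q;q)_n(aq;q)_n} = \frac{(k,k/a;q)_n}{(aq,q;q)_n}$. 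Thus adding $(1-c_1-c_2)$ times this base pair adjusts the $\wpalpha_0$ value up to exactly $1$ while contributing precisely the stated correction term to $\wpbeta_n$; verifying that $c_1 + c_2 + (1-c_1-c_2) = 1$ confirms the new $\wpalpha_0 = 1$ normalization holds. Since this is routine and follows directly from the definition, the omission of the proof in the paper is fully justified.
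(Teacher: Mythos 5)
Your proof is correct and is exactly the direct verification from the definition that the paper has in mind when it omits the proof: linearity of the kernel, plus the observation that the pair $\wpalpha_n=\delta_{n,0}$ has $\wpbeta_n=(k,k/a;q)_n/(aq,q;q)_n$, so the term $(1-c_1-c_2)(k,k/a;q)_n/(aq,q;q)_n$ precisely compensates for restoring the normalization $\wpalpha_0=1$. Nothing further is needed.
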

Note for later the special case $c_2=0$. The following corollary is
immediate upon setting $k=0$.
\begin{corollary}\label{cor2}
Let $(\alpha_n^{(1)}, \beta_n^{(1)} )$ and $(\alpha_n^{(2)},
\beta_n^{(2)} )$ be Bailey pairs relative to $a$, and let $c_1$ and
$c_2$ be constants. Then   $(\alpha_n, \beta_n)$ is a Bailey pair
relative to $a$, where
\begin{align*}
\alpha_n&=c_1\alpha_n^{(1)}+c_2\alpha_n^{(2)},\\
\beta_n&=c_1\beta_n^{(1)}+c_2\beta_n^{(2)}
+\frac{(1-c_1-c_2)}{(aq,q;q)_{n}}.
\end{align*}
\end{corollary}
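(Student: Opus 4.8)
The quickest route is to specialize Theorem~\ref{wpt2} at $k=0$. Setting $k=0$ in the defining relation \eqref{WPpair} turns each factor $(k/a;q)_{n-j}$ and $(k;q)_{n+j}$ into $1$, so \eqref{WPpair} collapses to the ordinary Bailey relation \eqref{bpeq}; thus a WP-Bailey pair specializes at $k=0$ to a Bailey pair relative to $a$, as noted in the introduction. Under the same specialization the correction term $(1-c_1-c_2)(k,k/a;q)_n/(aq,q;q)_n$ in Theorem~\ref{wpt2} becomes $(1-c_1-c_2)/(aq,q;q)_n$, and the linear combinations of the $\wpalpha$'s and $\wpbeta$'s become the stated combinations of the $\alpha$'s and $\beta$'s. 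So the corollary is the $k=0$ shadow of Theorem~\ref{wpt2}.

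Because this route tacitly assumes the two Bailey pairs arise as the $k=0$ traces of WP-Bailey pairs, and not every Bailey pair need lift, I would also give a self-contained argument valid for arbitrary Bailey pairs relative to $a$, resting only on the linearity of \eqref{bpeq}. The map carrying a sequence $(\alpha_r)$ to the sequence with $n$th term $\sum_{r=0}^n \alpha_r/[(q;q)_{n-r}(aq;q)_{n+r}]$ is linear, so any linear combination of $\alpha$-sequences of Bailey pairs is matched term by term by the same combination of their $\beta$-sequences. The one ingredient beyond linearity is the \emph{trivial} Bailey pair relative to $a$, with $\alpha_n=\delta_{n,0}$ and $\beta_n=1/(aq,q;q)_n$, which satisfies \eqref{bpeq} by inspection (and is the $k=0$ trace of the trivial WP-Bailey pair $(\delta_{n,0},\,(k,k/a;q)_n/(aq,q;q)_n)$).

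I would then form the affine combination
\[
(\alpha_n,\beta_n)=c_1(\alpha_n^{(1)},\beta_n^{(1)})+c_2(\alpha_n^{(2)},\beta_n^{(2)})+(1-c_1-c_2)\left(\delta_{n,0},\frac{1}{(aq,q;q)_n}\right),
\]
whose coefficients $c_1,c_2,1-c_1-c_2$ sum to $1$. By linearity it satisfies \eqref{bpeq}; for $n\geq1$ the trivial pair drops out, giving $\alpha_n=c_1\alpha_n^{(1)}+c_2\alpha_n^{(2)}$, while the $\beta$-side reads $c_1\beta_n^{(1)}+c_2\beta_n^{(2)}+(1-c_1-c_2)/(aq,q;q)_n$ for all $n$, exactly the asserted formulas. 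The only genuine obstacle is the normalization $\alpha_0=1$ demanded by the definition of a Bailey pair: the naive combination gives $\alpha_0=c_1+c_2$, which fails unless $c_1+c_2=1$, and it is precisely the $(1-c_1-c_2)$ copies of the trivial pair — whose $\beta$-side supplies the correction term — that restore $\alpha_0=c_1+c_2+(1-c_1-c_2)=1$. Recognizing that this correction is forced by normalization, and is the $k=0$ echo of the corresponding term in Theorem~\ref{wpt2}, is the crux; the rest is linear bookkeeping.
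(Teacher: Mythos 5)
Your proposal is correct and takes essentially the same route as the paper, whose entire proof is the single sentence ``The following corollary is immediate upon setting $k=0$'' in Theorem~\ref{wpt2}; your direct linearity argument with the trivial pair $(\delta_{n,0},\,1/(aq,q;q)_n)$ just spells out what that one-liner leaves implicit, including the normalization $\alpha_0=1$ that forces the correction term. Your worry that the $k=0$ route tacitly assumes the two Bailey pairs lift is not a genuine obstacle---the $k=0$ instance of \eqref{WPpair} is exactly \eqref{bpeq}, so the linearity proof of Theorem~\ref{wpt2} applies verbatim at $k=0$ without any lift to general $k$---but your self-contained version disposes of the point cleanly in any case.
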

\begin{remark}
 Some the Bailey pairs derived by Slater \cite{S51,S52}
follow from other pairs derived by her in \cite{S51,S52} as a result
of Corollaries \ref{cor1} and \ref{cor2}.
\end{remark}

We are now in a position to prove that the pairs at \eqref{H3'ab},
\eqref{H5'ab}, \eqref{H8'ab} and \eqref{H12'ab} are indeed WP-Bailey
pairs.

\begin{corollary}
The pair of sequences $(\wpalpha_n(1,k), \wpbeta_n(1,k))$, where
\begin{align*}
\wpalpha_n(1,k)&=(-1)^n q^{-n^2/2}\left(q^{-3 n/2}+q^{3 n/2}\right), &&\phantom{asdadasdadaasd}(\textbf{H3'})&\\
\wpbeta_n(1,k)&=\frac{(-1)^n (1-kq^n+kq^{2n})
(k;q)_{n}}{q^{(n^2+3n)/2}(q;q)_n}, \notag
\end{align*}
is a WP-Bailey pair.
\end{corollary}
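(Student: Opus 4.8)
The plan is to show that the \textbf{H3'} pair is obtained by applying the operation of Theorem~\ref{wpt1} to a simpler, previously-verified WP-Bailey pair, thereby reducing the problem to an application of an already-proven result rather than a direct computation of the sum defining $\wpbeta_n$. The natural candidate for the input pair is \textbf{H4'} at \eqref{H4'ab}, whose $\wpalpha_n(1,k)=(-1)^n q^{-n^2/2}(q^{-n/2}+q^{n/2})$ is independent of $k$, as required by the hypothesis of Theorem~\ref{wpt1}. First I would verify the algebraic identity at the level of the $\wpalpha$-sequences: with $a=1$, Theorem~\ref{wpt1} sends $\wpalpha_n(1)$ to $\wpalpha_n^*(1)=(q^n+q^{-n})\wpalpha_n(1)$, so I would check that $(q^n+q^{-n})\cdot(-1)^n q^{-n^2/2}(q^{-n/2}+q^{n/2})$ equals the \textbf{H3'} value $(-1)^n q^{-n^2/2}(q^{-3n/2}+q^{3n/2})$. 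Expanding the product $(q^n+q^{-n})(q^{-n/2}+q^{n/2})=q^{3n/2}+q^{n/2}+q^{-n/2}+q^{-3n/2}$ shows these do \emph{not} match directly, so \textbf{H4'} alone is not the correct input.

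This mismatch signals that the real work is to combine Theorem~\ref{wpt1} with Theorem~\ref{wpt2} (in its $c_2=0$ special case, i.e.\ scaling a single pair and correcting the $\wpbeta$ by the multiple of $(k,k/a;q)_n/(aq,q;q)_n$). The strategy would be: apply Theorem~\ref{wpt1} to \textbf{H4'} to produce a pair with $\wpalpha_n^*=(q^n+q^{-n})(-1)^nq^{-n^2/2}(q^{-n/2}+q^{n/2})$, and then use Theorem~\ref{wpt2} to subtract off the two ``extra'' terms $q^{n/2}+q^{-n/2}$ (which is, up to sign and the $q^{-n^2/2}$ factor, just \textbf{H4'}'s own $\wpalpha_n$ again). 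Concretely, since $\wpalpha_n^* - \wpalpha_n^{\textbf{H4'}}$ collapses to exactly $(-1)^nq^{-n^2/2}(q^{-3n/2}+q^{3n/2})$, the \textbf{H3'} $\wpalpha$-sequence is the $c_1=1,c_2=-1$ linear combination of the Theorem~\ref{wpt1} output and the \textbf{H4'} pair, with the $(1-c_1-c_2)=1$ correction term from Theorem~\ref{wpt2} reinserting the base sequence $(k;q)_n/(q,q;q)_n$ (taking $a=1$).

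Having fixed the combinatorics of the $\wpalpha$'s, the remaining step is to confirm that the $\wpbeta_n$ produced by this chain of operations simplifies to the stated closed form $(-1)^n(1-kq^n+kq^{2n})(k;q)_n/(q^{(n^2+3n)/2}(q;q)_n)$. This is where I expect the genuine computation to live: I would substitute the \textbf{H4'} value $\wpbeta_n(1,k)=(-1)^n(k;q)_n/(q^{(n^2+n)/2}(q;q)_n)$ into the $\wpbeta_n^*$ formula of Theorem~\ref{wpt1} (with $a=1$), which requires assembling $(1+q^{2n})\wpbeta_n(1,k)$, $-(1-k)^2\wpbeta_{n-1}(1,kq)$, and the term $-(k;q)_n/(q,q;q)_n$, all divided by $q^n$, and then apply the Theorem~\ref{wpt2} correction. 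The main obstacle will be the bookkeeping around the shifted argument $\wpbeta_{n-1}(1,kq)$: the substitution $k\mapsto kq$ alters $(k;q)_{n-1}$ to $(kq;q)_{n-1}$, and reconciling this against the target $(1-kq^n+kq^{2n})(k;q)_n$ requires carefully tracking the $q$-shifted Pochhammer factors and extracting the quadratic-in-$q^n$ numerator. Once the powers of $q$ in the prefactor (note $q^{(n^2+n)/2}\cdot q^n = q^{(n^2+3n)/2}$) and the $(k;q)_n$ factors are aligned, the identity should reduce to verifying the elementary polynomial identity producing $1-kq^n+kq^{2n}$, which is routine but is the crux of the verification.
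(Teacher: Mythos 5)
Your proposal is correct and follows essentially the same route as the paper: apply Theorem~\ref{wpt1} to the pair \textbf{H4'} at \eqref{H4'ab}, then use Theorem~\ref{wpt2} to subtract off \textbf{H4'} again (your $c_1=1$, $c_2=-1$ labeling is just the paper's $c_1=-1$, $c_2=1$ with the two pairs swapped, and the correction term $(1-c_1-c_2)\frac{(k,k;q)_{n}}{(q,q;q)_{n}}$ with coefficient $1$ is identified correctly). The final bookkeeping you describe does close up as expected, since $(1-k)^2(kq;q)_{n-1}=(1-k)(k;q)_n$ and the bracket collapses to $1+q^n-kq^n+kq^{2n}$, exactly as in the paper's displayed simplification.
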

\begin{proof}From what has been said previously, the pair at
\eqref{H4'ab}, namely
\begin{align*}
\wpalpha_n^{(1)}(1,k)&=(-1)^n q^{-n^2/2}\left(q^{-n/2}+q^{n/2}\right),&&\phantom{asdadadsaasadasd}(\textbf{H4'})&\\
\wpbeta_n^{(1)}(1,k)&=\frac{(-1)^n (k;q)_{n}}{q^{(n^2+n)/2}(q;q)_n}.
\notag
\end{align*}
is a WP-Bailey pair. From Theorem \ref{wpt1}, with $a=1$,
\begin{align*}
\wpalpha_n^{(2)}(1,k)&= (q^n+q^{-n})\wpalpha_n^{(1)}(1,k)\\
&=
(-1)^n q^{-n^2/2}\left(q^{-3n/2}+q^{3n/2}+q^{-n/2}+q^{n/2}\right),\\
\wpbeta_n^{(2)}(1,k)&=\frac{(1+q^{2n})\wpbeta_n^{(1)}(1,k)-(1-k)
\left(1-k\right)\wpbeta_{n-1}^{(1)}(1,kq)}{q^{n}}
%}
-\displaystyle{\frac{(k,k;q)_{n}}{(q,q;q)_{n}}}\\
&=\frac{(-1)^n (k;q)_{n}}{q^{(n^2+n)/2}(q;q)_n}\frac{1+q^n-k q^n+k
q^{2n}}{q^n}-\displaystyle{\frac{(k,k;q)_{n}}{(q,q;q)_{n}}}, \notag
\end{align*}
is a WP-Bailey pair. The result now follows from Theorem \ref{wpt2},
upon setting $a=1$, $c_1=-1$, $c_2=1$ and letting
$(\wpalpha_n^{(1)}(1,k), \wpbeta_n^{(1)}(1,k))$ and
$(\wpalpha_n^{(2)}(1,k)$, $\wpbeta_n^{(2)}(1,k))$ be as stated above.
\end{proof}

We next prove that the pair at \eqref{H8'ab} is a WP-Bailey pair.
\begin{corollary}
The pair of sequences $(\wpalpha_n(1,k), \wpbeta_n(1,k))$, where
\begin{align*}
\wpalpha_n(1,k)&=(-1)^n(q^{-n}+q^{n}), &&\phantom{asdadadsadaasdadasd}(\textbf{H8'})&\\
\wpbeta_n(1,k)&=(-1)^n\frac{ (k^2;q^2)_{n} (1+kq^{2n})}
{(1+k)q^n(q^2;q^2)_n}. \notag
\end{align*}
is a WP-Bailey pair.
\end{corollary}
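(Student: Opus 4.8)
The plan is to manufacture the \textbf{H8'} pair from the already-established pair \textbf{H7'} using the two tools proved above: Theorem~\ref{wpt1} (which multiplies $\wpalpha_n$ by $aq^n+q^{-n}$) followed by Theorem~\ref{wpt2} in its $c_2=0$ special case (which rescales a pair). The crucial observation is that the $\wpalpha_n$ of \textbf{H8'}, namely $(-1)^n(q^{-n}+q^n)$, equals $\tfrac12(q^n+q^{-n})$ times the $\wpalpha_n=2(-1)^n$ of \textbf{H7'}. Since \textbf{H7'} is one of the pairs already shown to be a specialization of \eqref{wpS}, we may treat it as a known WP-Bailey pair; moreover its $\wpalpha_n=2(-1)^n$ is independent of $k$, so Theorem~\ref{wpt1} applies with $a=1$.

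Concretely, I would first apply Theorem~\ref{wpt1} with $a=1$ to \textbf{H7'}. This sends $\wpalpha_n$ to $\wpalpha_n^{*}=2(-1)^n(q^n+q^{-n})$ and produces the companion
\[
\wpbeta_n^{*}=\frac{(1+q^{2n})\wpbeta_n-(1-k)^2\,\wpbeta_{n-1}(1,kq)}{q^n}-\frac{(k,k;q)_n}{(q,q;q)_n},
\]
where $\wpbeta_n=(-1)^n(k^2;q^2)_n/(q^2;q^2)_n$ is the \textbf{H7'} beta. Next I would apply Theorem~\ref{wpt2} with $c_2=0$ and $c_1=\tfrac12$ to this starred pair; this halves $\wpalpha_n^{*}$ to exactly the desired $(-1)^n(q^{-n}+q^n)$ and returns the companion
\[
\wpbeta_n=\tfrac12\wpbeta_n^{*}+\tfrac12\frac{(k,k;q)_n}{(q,q;q)_n}.
\]
It then remains only to verify that this expression collapses to the stated closed form $(-1)^n(k^2;q^2)_n(1+kq^{2n})/\big((1+k)q^n(q^2;q^2)_n\big)$.

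This final simplification is the one genuinely computational step, and the only place I expect real work. The key reductions are the Pochhammer identities $(k^2q^2;q^2)_{n-1}=(k^2;q^2)_n/(1-k^2)$ and $(q^2;q^2)_{n-1}=(q^2;q^2)_n/(1-q^{2n})$, which rewrite $\wpbeta_{n-1}(1,kq)$ as a multiple of $(k^2;q^2)_n/(q^2;q^2)_n$. Using $(1-k)^2/(1-k^2)=(1-k)/(1+k)$, the combination $(1+q^{2n})\wpbeta_n-(1-k)^2\wpbeta_{n-1}(1,kq)$ factors as $(-1)^n(k^2;q^2)_n/(q^2;q^2)_n$ times the bracket $(1+q^{2n})+(1-k)(1-q^{2n})/(1+k)$, whose numerator over $1+k$ telescopes to $2(1+kq^{2n})$. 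Dividing by $q^n$ gives $\wpbeta_n^{*}=(-1)^n\,2(k^2;q^2)_n(1+kq^{2n})/\big((1+k)q^n(q^2;q^2)_n\big)-(k,k;q)_n/(q,q;q)_n$, and the $c_1=\tfrac12$ step restores exactly half of the subtracted $(k,k;q)_n/(q,q;q)_n$ term so that it cancels, leaving precisely the claimed \textbf{H8'} beta. Since the argument invokes nothing beyond Theorems~\ref{wpt1} and~\ref{wpt2} and the known status of \textbf{H7'}, this completes the construction.
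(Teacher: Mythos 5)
Your proposal is correct and follows essentially the same route as the paper: start from \textbf{H7'}, apply Theorem~\ref{wpt1} with $a=1$, then rescale by $\tfrac12$ via Theorem~\ref{wpt2} so that the extra $\tfrac12\frac{(k,k;q)_n}{(q,q;q)_n}$ term cancels the one produced by Theorem~\ref{wpt1} (the choice of which constant is labelled $c_1$ versus $c_2$ is immaterial by symmetry). Your explicit simplification to $2(1+kq^{2n})/(1+k)$ is right, and in fact corrects a small typo in the paper's intermediate expression, where $(2+2q^{2n})$ should read $(2+2kq^{2n})$.
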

\begin{proof}
The proof is similar to that for \eqref{H3'ab} in the corollary
above, except we start with
\begin{align*}
\wpalpha_n^{(1)}(1,k)&=2(-1)^n, &&\phantom{asdadadsadaasdadssdsasadasd}(\textbf{H7'})&\\
\wpbeta_n^{(1)}(1,k)&=(-1)^n\,\frac{ (k^2;q^2)_{n}}{(q^2;q^2)_n}.
\notag
\end{align*}
Theorem \ref{wpt1} gives
\begin{align*}
\wpalpha_n^{(2)}(1,k)&=2(-1)^n(q^n+q^{-n}), \\
\wpbeta_n^{(2)}(1,k)&=(-1)^n\,\frac{ (k^2;q^2)_{n}(2+2
q^{2n})}{q^n(1+k)(q^2;q^2)_n}-\displaystyle{\frac{(k,k;q)_{n}}{(q,q;q)_{n}}},
\notag
\end{align*}
is a WP-Bailey pair. The result follows once again from Theorem
\ref{wpt2}, upon setting $c_2=1/2$ and $c_1=0$.
\end{proof}
We next give  proofs for the two remaining two pairs at
\eqref{H5'ab} and \eqref{H12'ab}.
\begin{corollary}
The pairs of sequences
\begin{align*}
\wpalpha_n(1,k)&=q^{-n}-q^{n}, &&\phantom{asdadadsadaadasd}(\textbf{H5'})&\\
\wpbeta_n(1,k)&=\frac{ (k,k;q)_{n}(1-2 k q^n + k q^{2 n})}
{(1-k)q^{n}(q,q;q)_n}. \notag
\end{align*}
and
\begin{align*}
\wpalpha_n(q,k)&=q^{-n}-q^{n+1}, &&\phantom{aad}(\textbf{H12'})&\\
\wpbeta_n(q,k)&=\frac{ (k;q)_{n} (k;q)_{n-1}(1-k q^{n-1}-k q^n+k
q^{2n})} {q^n(q,q^2;q)_n}. \notag
\end{align*}
are WP-Bailey pairs.
\end{corollary}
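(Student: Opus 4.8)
The plan is to treat \textbf{H5'} and \textbf{H12'} uniformly, by first producing a single $a$-parametrized WP-Bailey pair whose $\wpalpha$ equals, up to a constant, exactly $q^{-n}-aq^{n}$, and then specializing $a=q$ (for \textbf{H12'}) and $a=1$ (for \textbf{H5'}). The first point to note is that these two pairs are different in character from \eqref{H3'ab} and \eqref{H8'ab}: here $\wpalpha_n=q^{-n}-aq^{n}$ carries \emph{no} factor $(-1)^n$. Every listed pair with $a=1$ that has $k$-independent $\wpalpha$ is either alternating with purely integral powers of $q$ (namely \textbf{H7'} and \textbf{H8'}) or else involves half-integral powers or the Gaussian factor $q^{\pm n^2/2}$ (as in \textbf{F3'}, \textbf{H3'}, \textbf{H4'}, \textbf{H17'}). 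Since the only expressions Theorems \ref{wpt1} and \ref{wpt2} can manufacture from such a pair have the shape $(c_1+c_2(aq^{n}+q^{-n}))\wpalpha_n^{(1)}$, an irremovable $(-1)^n$ (or a half-integral/Gaussian factor) would persist. Hence \textbf{H5'} and \textbf{H12'} cannot be reached from the listed alternating pairs, and a genuinely new base pair is needed.

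To build that base I would specialize Singh's WP-Bailey pair \eqref{wpAB}, setting $c=q$ and letting $d\to\infty$. The choice $c=q$ cancels a $(q;q)_n$ against the first denominator slot, and the limit $d\to\infty$ removes all dependence of $\wpalpha$ on $k$, leaving the pair $\wpalpha^{\mathrm{b}}_n=\dfrac{(1-aq^{2n})\,q^{-n}}{1-a}$ and $\wpbeta^{\mathrm{b}}_n=\dfrac{(kq/a,k;q)_n}{(a,q;q)_n}\,q^{-n}$. I would record this as a WP-Bailey pair, either by noting that a limit of \eqref{wpAB} is again a WP-Bailey pair or by a direct very-well-poised check of \eqref{WPpair}. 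Applying the special case $c_2=0$ of Theorem \ref{wpt2} (the case flagged in the text) with $c_1=1-a$ rescales this into the pair $\wpalpha_n=(1-a)\wpalpha^{\mathrm{b}}_n=q^{-n}-aq^{n}$, with $\wpbeta_n=(1-a)\wpbeta^{\mathrm{b}}_n+a\,\dfrac{(k,k/a;q)_n}{(aq,q;q)_n}$.

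From here \textbf{H12'} is immediate: putting $a=q$ gives $\wpalpha_n=q^{-n}-q^{n+1}$, and one simplifies $(1-q)\,(k,k;q)_n q^{-n}/(q,q;q)_n+q\,(k,k/q;q)_n/(q^2,q;q)_n$ over a common denominator, a short $q$-Pochhammer computation that collapses to the stated $\wpbeta_n(q,k)$. For \textbf{H5'} I would let $a\to1$; the $\wpalpha$ limit is entirely clean, since $(1-a)\wpalpha^{\mathrm{b}}_n=(1-aq^{2n})\,q^{-n}\to q^{-n}-q^{n}$.

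The step I expect to be the main obstacle is the $a\to1$ limit of $\wpbeta$ for \textbf{H5'}. The base pair is singular at $a=1$, because $c=q$ forces $aq/c=1$, so the factor $(a;q)_n$ in the denominator of $\wpbeta^{\mathrm{b}}_n$ vanishes; it is precisely the prefactor $1-a$ supplied by Theorem \ref{wpt2} that cancels this pole. Writing $(a;q)_n=(1-a)(aq;q)_{n-1}$ gives $(1-a)\wpbeta^{\mathrm{b}}_n\to (kq,k;q)_n\,q^{-n}/\big((q;q)_{n-1}(q;q)_n\big)$, and adding the limit $(k,k;q)_n/(q,q;q)_n$ of the correction term must be shown to reproduce $\dfrac{(k,k;q)_n(1-2kq^{n}+kq^{2n})}{(1-k)q^{n}(q,q;q)_n}$. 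This final identification is again routine $q$-factorial algebra, but the limit has to be organized so that the pole cancellation is transparent before the two terms are combined.
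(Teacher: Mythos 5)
Your proposal is correct and follows essentially the same route as the paper: specialize Singh's pair \eqref{wpAB} by fixing one of $c,d$ at $q$ and sending the other to $\infty$ (the pair is symmetric in $c$ and $d$, so your $c=q$, $d\to\infty$ is the paper's $d=q$, $c\to\infty$), then apply the $c_2=0$ case of Theorem \ref{wpt2} with $c_1=1-a$ to absorb the $1/(1-a)$ factor, and finally set $a=1$ and $a=q$. The $a\to1$ pole you flag is handled exactly as you suggest, by writing $(a;q)_n=(1-a)(aq;q)_{n-1}$ before specializing, and the remaining $q$-Pochhammer algebra closes as you describe.
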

\begin{proof}
Let $c \to \infty$ (or $c \to 0$) and set $d=q$  in \eqref{wpAB} to
get that
\begin{align*}
\alpha_n(a,k) &= \frac{q^{-n}-aq^{n}}{1-a},\\
\beta_{n}(a,k) &= \frac{(kq/a,k;q)_n}{(a,q;q)_n q^n}
\end{align*}
is a WP-Bailey pair. Now apply Theorem \ref{wpt2} with $c_1=1-a$ and
$c_2=0$ (with $(\wpalpha_n^{(1)}(a,k), \wpbeta_n^{(1)}(a,k))$ $=$
$(\wpalpha_n^{(2)}(a,k), \wpbeta_n^{(2)}(a,k))$ $=$ $(\wpalpha_n(a,k),
\wpbeta_n(a,k))$) to get that
\begin{align*}
\wpalpha_n^*(a,k) &= q^{-n}-aq^{n},\\
\wpbeta_{n}^*(a,k) &= \frac{(kq/a,k;q)_n}{(aq;q)_{n-1}(q;q)_n
q^n}+a\frac{(k,k/a;q)_n}{(aq,q;q)_n },
\end{align*}
is a WP-Bailey pair. The pairs in the statement of the corollary
are, respectively, the cases $a=1$ and $a=q$.
\end{proof}

\begin{remark} If lifts exist for the remaining Bailey pairs found by
Slater \cite{S51, S52}, finding them will likely prove more
difficult, as experiment seems to indicate that the $\alpha_n$ are
\emph{not} independent of $k$.
\end{remark}

For completeness sake, we include the following theorem in this
section, as it gives yet another way of deriving new Bailey pairs
from existing Bailey pairs. We first note the identities
\begin{equation}\label{id1}
1=\frac{1-a q^{n+r+1}}{1-a q^{2r+1}}-a q^{2r+1}\frac{1-q^{n-r}}{1-a
q^{2r+1}}.
\end{equation}
\begin{equation}\label{id2}
1=q^{-n+r}\frac{1-a q^{n+r+1}}{1-a
q^{2r+1}}-q^{-n+r}\frac{1-q^{n-r}}{1-a q^{2r+1}}.
\end{equation}

\begin{theorem}\label{bptother}
Suppose $(\alpha_n(a,q), \beta_n(a,q))$ is a Bailey pair with
respect to $a$. Then so are the pairs $(\alpha_n^*(a,q),
\beta_n^*(a,q))$ and $(\alpha_n^{\dagger}(a,q),
\beta_n^{\dagger}(a,q))$, where $\alpha_0^*(a,q)=
\beta_0^*(a,q)=\alpha_0^{\dagger}(a,q)= \beta_0^{\dagger}(a,q)=1$,
and for $n>0$,{\allowdisplaybreaks
\begin{align*}
\alpha_n^*(a,q)&=(1-aq)\left ( \frac{\alpha_n(aq,q)}{1-a q^{2n+1}}-a
q^{2n-1}\frac{\alpha_{n-1}(aq,q)}{1-a q^{2n-1}}\right),\\
\beta_n^*(a,q)) &=\beta_n(aq,q),
\end{align*}
\begin{align*}
\alpha_n^{\dagger}(a,q)&=(1-aq)\left ( q^n \frac{
\alpha_n(aq,q)}{1-a q^{2n+1}}-
q^{n-1}\frac{\alpha_{n-1}(aq,q)}{1-a q^{2n-1}}\right),\\
\beta_n^{\dagger}(a,q)) &=q^n \beta_n(aq,q).
\end{align*}
}
\end{theorem}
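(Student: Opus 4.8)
The plan is to verify the defining relation \eqref{bpeq} directly for each new pair, using \eqref{id1} and \eqref{id2} as the mechanism that drives the computation. Since $\beta_n^*(a,q)=\beta_n(aq,q)$ and $(\alpha_n(aq,q),\beta_n(aq,q))$ is by hypothesis a Bailey pair relative to $aq$, I would begin from the known relation
\[
\beta_n^*(a,q)=\beta_n(aq,q)=\sum_{r=0}^{n}\frac{\alpha_r(aq,q)}{(q;q)_{n-r}(aq^2;q)_{n+r}},
\]
and transform the right-hand side into $\sum_{r=0}^{n}\alpha_r^*(a,q)/\big((q;q)_{n-r}(aq;q)_{n+r}\big)$. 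Establishing this equality, together with the trivial normalization $\alpha_0^*=1$ (and the $n=0$ instance, which is immediate), is exactly the claim that $(\alpha_n^*,\beta_n^*)$ is a Bailey pair relative to $a$.

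First I would multiply the summand above by $1$ written in the form \eqref{id1}, and split the result into two sums $A$ and $B$ coming from the two terms on the right of \eqref{id1}. The point of \eqref{id1} is that it inserts exactly the factors $1-aq^{n+r+1}$ and $1-q^{n-r}$ needed to convert the $(aq^2;q)$-products into $(aq;q)$-products. Concretely, the elementary identities $(1-aq^{n+r+1})(aq;q)_{n+r}=(1-aq)(aq^2;q)_{n+r}$ and $(aq;q)_{n+r+1}=(1-aq)(aq^2;q)_{n+r}$ let me rewrite $A$ with denominator $(q;q)_{n-r}(aq;q)_{n+r}$, while in $B$ the factor $1-q^{n-r}$ collapses $(q;q)_{n-r}$ to $(q;q)_{n-r-1}$ (so the $r=n$ term drops out) and $(aq^2;q)_{n+r}$ becomes $(aq;q)_{n+r+1}$. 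Relabelling the summation index in $B$ so that $\alpha_{r-1}(aq,q)$ appears at position $r$ then aligns the two sums, and the bracketed combination of $\alpha_r(aq,q)$ and $\alpha_{r-1}(aq,q)$ that emerges is precisely $\alpha_r^*(a,q)/(1-aq)$, with the convention $\alpha_{-1}=0$ taking care of the $r=0$ boundary. This produces the desired formula for $\beta_n^*$.

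The dagger pair is handled by the identical argument with \eqref{id2} in place of \eqref{id1}. The only difference is the common factor $q^{-n+r}$ multiplying both terms of \eqref{id2}; carrying it through the same two Pochhammer conversions and index shift produces the global factor $q^n$ appearing in $\beta_n^{\dagger}=q^n\beta_n(aq,q)$ and the weights $q^r$ and $q^{r-1}$ appearing in the definition of $\alpha_n^{\dagger}$. Conceptually the theorem is immediate once \eqref{id1} and \eqref{id2} are available, since these identities are engineered precisely so that the unwanted cross terms cancel after the reindexing; the main obstacle is therefore not conceptual but bookkeeping, namely keeping the two Pochhammer conversions straight and correctly matching the boundary terms at $r=0$ and $r=n$ once the sum in $B$ has been relabelled.
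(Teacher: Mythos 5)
Your proposal is correct and follows essentially the same route as the paper's own proof: both insert \eqref{id1} (resp.\ \eqref{id2}) into the Bailey-pair relation for $(\alpha_n(aq,q),\beta_n(aq,q))$, convert the $(aq^2;q)$-Pochhammer symbols to $(aq;q)$ ones, and reindex the second sum so that $\alpha_{r-1}(aq,q)$ aligns with $\alpha_r(aq,q)$. The only cosmetic difference is that the paper splits off the $r=n$ term before applying \eqref{id1}, whereas you keep it in the sum and observe that its contribution to the second piece vanishes because $1-q^{n-r}=0$ there; the computations are otherwise identical.
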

\begin{proof}
From the definition of a Bailey pair, {\allowdisplaybreaks
\begin{align*}
\beta_n(aq,q)&=\sum_{r=0}^{n}
\frac{\alpha_r(aq,q)}{(q;q)_{n-r}(aq^2;q)_{n+r}}\\
&=\sum_{r=0}^{n-1}
\frac{\alpha_r(aq,q)}{(q;q)_{n-r}(aq^2;q)_{n+r}}+\frac{\alpha_n(aq,q)}{(aq^2;q)_{2n}}\\
&=(1-aq)\sum_{r=0}^{n-1}
\frac{\alpha_r(aq,q)}{(q;q)_{n-r}(aq;q)_{n+r+1}}+\frac{\alpha_n(aq,q)}{(aq^2;q)_{2n}}\\
&=(1-aq)\sum_{r=0}^{n-1} \frac{\alpha_r(aq,q)}{(1-a
q^{2r+1})(q;q)_{n-r}(aq;q)_{n+r}}\\
&\phantom{asd}-(1-aq)\sum_{r=0}^{n-1} \frac{a
q^{2r+1}\alpha_r(aq,q)}{(1-a
q^{2r+1})(q;q)_{n-r-1}(aq;q)_{n+r+1}}+\frac{\alpha_n(aq,q)}{(aq^2;q)_{2n}}\\
&=(1-aq)\sum_{r=0}^{n-1} \frac{\alpha_r(aq,q)}{(1-a
q^{2r+1})(q;q)_{n-r}(aq;q)_{n+r}}\\
&\phantom{asd}-(1-aq)\sum_{r=1}^{n} \frac{a
q^{2r-1}\alpha_{r-1}(aq,q)}{(1-a
q^{2r-1})(q;q)_{n-r}(aq;q)_{n+r}}+\frac{\alpha_n(aq,q)}{(aq^2;q)_{2n}}.
\end{align*}
} The next-to-last equality follows from \eqref{id1} and the result
now follows for $(\alpha_n^*(a,q), \beta_n^*(a,q))$. The result for
$(\alpha_n^{\dagger}(a,q), \beta_n^{\dagger}(a,q))$ follows
similarly, except we use \eqref{id2} at the next to last step.
\end{proof}

\begin{corollary}
The pairs $(\alpha_n^*(a,q), \beta_n^*(a,q))$ and
$(\alpha_n^{\dagger}(a,q), \beta_n^{\dagger}(a,q))$ are Bailey pairs
with respect to $a$, where $\alpha_0^*(a,q)=
\beta_0^*(a,q)=\alpha_0^{\dagger}(a,q)= \beta_0^{\dagger}(a,q)=1$,
and for $n>0$,
%{\allowdisplaybreaks
\begin{align*}
\alpha_n^*(a,q)&=\frac{(aq,c,d;q)_n}{(aq^2/c,aq^2/d, q;q)_n}\left(
\frac{-a q}{cd}\right)^n q^{(n^2+n)/2} \\
&\phantom{asd}- a q^{2n-1}\frac{(aq,c,d;q)_{n-1}}{(aq^2/c,aq^2/d,
q;q)_{n-1}}\left( \frac{-a q}{cd}\right)^{n-1} q^{(n^2-n)/2},\\
\beta_n^*(a,q)) &=\frac{(aq^2/cd;q)_n}{(aq^2/c,aq^2/d,q;q)_n},
\end{align*}
\begin{align*}
\alpha_n^{\dagger}(a,q)&=q^n\frac{(aq,c,d;q)_n}{(aq^2/c,aq^2/d,
q;q)_n}\left( \frac{-a q}{cd}\right)^n q^{(n^2+n)/2}\\
&\phantom{asd} - q^{n-1}\frac{(aq,c,d;q)_{n-1}}{(aq^2/c,aq^2/d,
q;q)_{n-1}}\left( \frac{-a q}{cd}\right)^{n-1} q^{(n^2-n)/2},\\
\beta_n^{\dagger}(a,q)) &=q^n
\frac{(aq^2/cd;q)_n}{(aq^2/c,aq^2/d,q;q)_n}.
\end{align*}
%}
\end{corollary}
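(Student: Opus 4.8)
The plan is to read this corollary as simply the specialization of Theorem~\ref{bptother} to the explicit Slater pair \eqref{wpS}. First I would note that \eqref{wpS} is a Bailey pair relative to $a$ \emph{identically} in $a$ (and in $c,d$); consequently, replacing $a$ by $aq$ throughout produces a Bailey pair relative to $aq$, whose members are obtained from \eqref{wpS} by the substitutions $(a,c,d;q)_n\mapsto(aq,c,d;q)_n$, $aq/c\mapsto aq^2/c$, $aq/d\mapsto aq^2/d$, and $1-aq^{2n}\mapsto 1-aq^{2n+1}$. In particular its zeroth terms equal $1$, so the hypotheses of Theorem~\ref{bptother} are met.

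Then I would substitute this pair relative to $aq$ into the starred and daggered constructions of Theorem~\ref{bptother}. The $\beta$-parts are immediate: $\beta_n^*(a,q)=\beta_n(aq,q)=(aq^2/cd;q)_n/(aq^2/c,aq^2/d,q;q)_n$ and $\beta_n^\dagger(a,q)=q^n\beta_n(aq,q)$, which are precisely the claimed formulas. For the $\alpha$-parts the one point to observe is a cancellation: $\alpha_n(aq,q)$ carries a numerator factor $(1-aq^{2n+1})$ and a denominator factor $(1-aq)$, so that
\[
(1-aq)\,\frac{\alpha_n(aq,q)}{1-aq^{2n+1}}=\frac{(aq,c,d;q)_n}{(aq^2/c,aq^2/d,q;q)_n}\left(\frac{-aq}{cd}\right)^n q^{(n^2+n)/2},
\]
and, using $((n-1)^2+(n-1))/2=(n^2-n)/2$, the shifted term $(1-aq)\,aq^{2n-1}\alpha_{n-1}(aq,q)/(1-aq^{2n-1})$ collapses in the same way to the second summand of $\alpha_n^*(a,q)$. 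Subtracting reproduces the stated $\alpha_n^*(a,q)$ exactly; the daggered case is identical save for the extra $q^n,q^{n-1}$ factors that Theorem~\ref{bptother} attaches, which pass through unchanged.

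The only real work here is bookkeeping: tracking the Pochhammer shifts under $a\mapsto aq$ and checking that the quadratic exponent drops from $(n^2+n)/2$ to $(n^2-n)/2$ when the index moves from $n$ to $n-1$. I anticipate no genuine obstacle, since the substantive content lives in Theorem~\ref{bptother} and in the Bailey-pair status of \eqref{wpS}; this corollary is a direct computation. The boundary values $\alpha_0^*=\beta_0^*=\alpha_0^\dagger=\beta_0^\dagger=1$ are inherited verbatim from the corresponding clause of Theorem~\ref{bptother}.
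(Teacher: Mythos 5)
Your proposal is correct and follows exactly the paper's route: the paper's proof is the single sentence that the corollary ``follows directly from applying Theorem~\ref{bptother} to the Bailey pair at \eqref{wpS}.'' Your additional bookkeeping (the $a\mapsto aq$ shifts, the cancellation of $(1-aq^{2n+1})$ against $(1-aq)$, and the exponent check $((n-1)^2+(n-1))/2=(n^2-n)/2$) is accurate and simply makes explicit what the paper leaves to the reader.
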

\begin{proof}
This follows directly from applying Theorem \ref{bptother} to the
Bailey pair at \eqref{wpS}.
\end{proof}

\section{ Dual WP-Bailey pairs}

We next consider a natural pairing of  WP-Bailey pairs. We first
recall that
\begin{equation}\label{bin1/qeq}
(a;1/q)_n = \frac{(-a)^n(1/a;q)_n}{q^{n(n-1)/2}}.
\end{equation}
Andrews showed in \cite{A84}  that if  $(\alpha_{n}(a,q),
\beta_{n}(a,q))$ is a Bailey pair relative to $a$, then
$(\alpha_{n}^*(a,q), \beta_{n}^*(a,q))$ is also a Bailey pair
relative to $a$, where
\begin{align*}
\alpha_{n}^*(a,q) &= a^n q^{n^2}\alpha_{n}(1/a,1/q),\\
 \beta_{n}^*(a,q) &= a^{-n}q^{-n^2-n}\beta_n(1/a,1/q).
\end{align*}

The pair $(\alpha_{n}^*(a,q), \beta_{n}^*(a,q))$ is called \emph{the
dual} of $(\alpha_{n}(a,q), \beta_{n}(a,q))$. Note that the dual of
$(\alpha_{n}^*(a,q), \beta_{n}^*(a,q))$ is $(\alpha_{n}(a,q),$
$\beta_{n}(a,q))$. As an example, the dual of the Bailey pair
{\allowdisplaybreaks
\begin{align*}
\alpha_n(1,q)&=(-1)^n q^{3n^2/2}\left(q^{-3 n/2}+q^{3 n/2}\right),
&&\phantom{asdadasdadaasd}(\textbf{B2})&\\
\beta_n(1,q)&=\frac{q^n }{(q;q)_n}. \notag
\end{align*}}
is the Bailey pair
\begin{align*}
\alpha_n(1,q)&=(-1)^n q^{-n^2/2}\left(q^{-3 n/2}+q^{3 n/2}\right), &&\phantom{asdadasdadaasd}(\textbf{H3})&\\
\beta_n(1,q)&=\frac{(-1)^n }{q^{(n^2+3n)/2}(q;q)_n}. \notag
\end{align*}

This concept of duality can be extended to WP-Bailey pairs.
\begin{theorem}\label{WPBdual}
Suppose $(\wpalpha_{n}(a,k,q), \wpbeta_{n}(a,k,q))$ is a WP-Bailey pair.
Then $(\wpalpha_{n}^*(a,k,q), \wpbeta_{n}^*(a,k,q))$ is also a WP-Bailey
pair, where
\begin{align*}
\wpalpha_{n}^*(a,k,q) &= \wpalpha_{n}(1/a,1/k,1/q),\\
 \wpbeta_{n}^*(a,k,q) &= \left(\frac{k}{aq}\right)^{2n}\wpbeta_n(1/a,1/k,1/q).
\end{align*}
\end{theorem}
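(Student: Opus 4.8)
The plan is to verify that the starred pair satisfies the defining WP-Bailey relation \eqref{WPpair} by a direct base-change computation, exactly parallel to Andrews' argument for ordinary duals recalled just above. First I would write the hypothesis
\[
\wpbeta_n(a,k,q)=\sum_{j=0}^n\frac{(k/a;q)_{n-j}(k;q)_{n+j}}{(q;q)_{n-j}(aq;q)_{n+j}}\,\wpalpha_j(a,k,q)
\]
and perform the simultaneous substitution $a\to 1/a$, $k\to 1/k$, $q\to 1/q$. This produces an identity for $\wpbeta_n(1/a,1/k,1/q)$ whose summand is built from the four base-$(1/q)$ symbols $(a/k;1/q)_{n-j}$ and $(1/k;1/q)_{n+j}$ in the numerator and $(1/q;1/q)_{n-j}$ and $(1/(aq);1/q)_{n+j}$ in the denominator.

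Next I would apply the conversion formula \eqref{bin1/qeq} to each of these four symbols to return to base $q$. The numerator symbols become $(k/a;q)_{n-j}$ and $(k;q)_{n+j}$, and the denominator symbols become $(q;q)_{n-j}$ and $(aq;q)_{n+j}$, each up to a Gaussian factor $q^{-m(m-1)/2}$ and a monomial $(-b)^m$. The key structural observation is that the exponents $q^{(n-j)(n-j-1)/2}$ and $q^{(n+j)(n+j-1)/2}$ occur identically in numerator and denominator, so they cancel in the ratio, leaving precisely the standard WP-Bailey kernel times a product of the $(-b)^m$ monomials.

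It then remains to collect that monomial prefactor. The signs cancel, since both numerator and denominator contribute $(-1)^{(n-j)+(n+j)}=1$; the surviving powers are $a^{n-j}k^{-2n}$ from the numerator and $a^{-(n+j)}q^{-2n}$ from the denominator, so the net factor is $a^{2n}k^{-2n}q^{2n}=(aq/k)^{2n}$. Since this is independent of $j$, it pulls out of the sum, giving
\[
\wpbeta_n(1/a,1/k,1/q)=\left(\frac{aq}{k}\right)^{2n}\sum_{j=0}^n\frac{(k/a;q)_{n-j}(k;q)_{n+j}}{(q;q)_{n-j}(aq;q)_{n+j}}\,\wpalpha_j(1/a,1/k,1/q).
\]
Multiplying both sides by $(k/(aq))^{2n}$ turns the left side into $\wpbeta_n^*(a,k,q)$ and the factor $\wpalpha_j(1/a,1/k,1/q)$ into $\wpalpha_j^*(a,k,q)$, which is exactly relation \eqref{WPpair} for the starred pair; the normalization $\wpalpha_0^*(a,k,q)=\wpalpha_0(1/a,1/k,1/q)=1$ is inherited for free. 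The only real obstacle is the exponent bookkeeping in the last two steps: one must confirm that both Gaussian-exponent cancellations and the sign cancellation go through, and that the residual powers of $a$, $k$ and $q$ assemble into the single clean factor $(aq/k)^{2n}$ with no leftover $j$-dependence. Everything else is formal substitution.
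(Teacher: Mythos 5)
Your proposal is correct and follows exactly the paper's own argument: the paper's proof consists of the single instruction ``Replace $a$ by $1/a$, $k$ by $1/k$ and $q$ by $1/q$ in \eqref{WPpair} and use \eqref{bin1/qeq} to simplify the resulting expression,'' and your computation (the cancellation of the Gaussian exponents and the signs, and the residual factor $(aq/k)^{2n}$) is precisely the bookkeeping that this instruction implicitly requires. Your write-up simply makes explicit the details the paper leaves to the reader.
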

\begin{proof}
Replace $a$ by $1/a$, $k$ by $1/k$ and $q$ by $1/q$ in
\eqref{WPpair} and use \eqref{bin1/qeq} to simplify the resulting
expression.
\end{proof}
As with standard Bailey pairs, we refer to the pair
$(\wpalpha_{n}^*(a,k,q), \wpbeta_{n}^*(a,k,q))$ in Theorem \ref{WPBdual}
as \emph{the dual} of $(\wpalpha_{n}(a,k,q), \wpbeta_{n}(a,k,q))$. Note
that, as above, the dual of $(\wpalpha_{n}^*(a,k,q),
\wpbeta_{n}^*(a,k,q))$ is $(\wpalpha_{n}(a,k,q),$ $\wpbeta_{n}(a,k,q))$.

We also remark that it is possible to use these duality
constructions to derive new Bailey- or WP-Bailey pairs.

\begin{corollary}
The pair of sequences $(\wpalpha_{n}^*(1,k), \wpbeta_{n}^*(1,k))$ is a
WP-Bailey pair, where
\begin{align}\label{H3'dual}
\wpalpha_n^*(1,k)&=(-1)^n q^{n^2/2}\left(q^{-3 n/2}+q^{3 n/2}\right), \\
\wpbeta_n^*(1,k)&=\frac{(-1)^n (1-q^n+kq^{2n})
(k;q)_{n}k^{n-1}q^{(n^2-3n)/2}}{(q;q)_n}. \notag
\end{align}
\end{corollary}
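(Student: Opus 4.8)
The plan is to obtain \eqref{H3'dual} as the dual, in the sense of Theorem \ref{WPBdual}, of the WP-Bailey pair \textbf{H3'} at \eqref{H3'ab} with $a=1$. Since Theorem \ref{WPBdual} already guarantees that the dual of a WP-Bailey pair is again a WP-Bailey pair, the entire task reduces to verifying that the explicit formulas in \eqref{H3'dual} are precisely what the dual construction produces. First I would treat the $\wpalpha$ component. Because the \textbf{H3'} sequence $\wpalpha_n(1,k)=(-1)^nq^{-n^2/2}(q^{-3n/2}+q^{3n/2})$ is independent of $k$, forming $\wpalpha_n^*(1,k)=\wpalpha_n(1,1/k,1/q)$ only requires replacing $q$ by $1/q$. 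The factor $q^{-3n/2}+q^{3n/2}$ is invariant under this inversion, while $q^{-n^2/2}$ becomes $q^{n^2/2}$, which immediately gives the stated $\wpalpha_n^*(1,k)$.

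The substantive computation is the $\wpbeta$ component, namely $\wpbeta_n^*(1,k)=(k/q)^{2n}\,\wpbeta_n(1,1/k,1/q)$. Here I would substitute $k\mapsto 1/k$ and $q\mapsto 1/q$ into the \textbf{H3'} expression for $\wpbeta_n(1,k)$ and then clear the reciprocal bases using \eqref{bin1/qeq}. Concretely, $(1/k;1/q)_n$ and $(1/q;1/q)_n$ are rewritten via \eqref{bin1/qeq} as base-$q$ Pochhammer symbols times explicit monomial factors, and the common factor $q^{n(n-1)/2}$ arising from the two applications cancels between numerator and denominator. The trinomial $1-q^{-n}/k+q^{-2n}/k$ that appears should be handled by factoring out $q^{-2n}/k$, which rewrites it as $(q^{-2n}/k)(1-q^n+kq^{2n})$ and thereby recovers the trinomial $1-q^n+kq^{2n}$ displayed in \eqref{H3'dual}.

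Finally I would collect the powers of $k$ and $q$. After the cancellation this yields $\wpbeta_n(1,1/k,1/q)=(-1)^n k^{-n-1}q^{(n^2+n)/2}(1-q^n+kq^{2n})(k;q)_n/(q;q)_n$, and multiplying by the prefactor $(k/q)^{2n}=k^{2n}q^{-2n}$ combines the powers of $k$ into $k^{n-1}$ and the powers of $q$ into $q^{(n^2-3n)/2}$, while leaving the sign $(-1)^n$ and the quotient $(k;q)_n/(q;q)_n$ intact. This matches \eqref{H3'dual} exactly, completing the verification.

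The main obstacle is purely bookkeeping: keeping correct track of the signs and half-integer powers of $q$ generated by the two uses of \eqref{bin1/qeq}, together with the sign $(-1)^n$ already present in \textbf{H3'}. Once the trinomial is factored correctly there is no further difficulty, and the result follows as an immediate consequence of Theorem \ref{WPBdual}.
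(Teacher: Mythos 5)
Your proposal is correct and follows exactly the paper's route: the paper likewise proves this corollary by observing that \eqref{H3'dual} is the dual, in the sense of Theorem \ref{WPBdual}, of the pair \textbf{H3'} at \eqref{H3'ab}. Your explicit verification of the $\wpalpha$ and $\wpbeta$ formulas (via \eqref{bin1/qeq} and the factorization of the trinomial) checks out and merely supplies the bookkeeping that the paper leaves to the reader.
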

\begin{proof}
The pair at \eqref{H3'dual} is the dual of the pair at
\eqref{H3'ab}:
\begin{align*}
\wpalpha_n(1,k)&=(-1)^n q^{-n^2/2}\left(q^{-3 n/2}+q^{3 n/2}\right), &&\phantom{asdadasdadaasd}(\textbf{H3'})&\\
\wpbeta_n(1,k)&=\frac{(-1)^n (1-kq^n+kq^{2n})
(k;q)_{n}}{q^{(n^2+3n)/2}(q;q)_n}. \notag
\end{align*}
\end{proof}
\begin{remark} The Bailey pair derived from \eqref{H3'dual} by setting
$k=0$ does not appear in Slater's lists of Bailey pairs in
\cite{S51, S52}.
\end{remark}

\section{Basic Hypergeometric series identities and identities of
 Rogers - Ramanujan-Slater type}

Each of the WP-Bailey pairs found above may be substituted into
\eqref{fbt1eq} and \eqref{wpeq},  leading to possibly new identities
between basic hypergeometric series and/or new identities of the
Rogers-Ramanujan type. We illustrate this by considering the
WP-Bailey pair from \eqref{H3'dual}. We believe that the following
identities are new.
\begin{corollary}\label{cor3}
Let $N$ be a positive integer and suppose $k \not = 0$.
\begin{multline}\label{cor3eq1}
\hspace{-4pt}\sum_{n=0}^{N}\frac{\left(q\sqrt{k},-q\sqrt{k},k,y,z,\frac{k
q^{N+1}}{yz},
\frac{1-\sqrt{1-4k}}{2}q,\frac{1+\sqrt{1-4k}}{2}q,q^{-N};q\right)_n}
{\left(\sqrt{k},-\sqrt{k},\frac{k q}{y},\frac{k q}{z},y z q^{-N}, k
q^{1+N},\frac{1-\sqrt{1-4k}}{2},
\frac{1+\sqrt{1-4k}}{2},q;q\right)_n}(-k)^n
q^{\frac{n^2-n}{2}}\\
=\frac{\left(q k, \frac{q k}{y z}, \frac{q }{y}, \frac{q
}{z};q\right)_N}{\left(\frac{q k}{y}, \frac{q k}{z}, q , \frac{q
}{y z};q\right)_N}\phantom{asdasdasasdasdasdadadasdasdasasdddaadasd}\\
\times \left( 1+\sum_{n=1}^{N} \frac{ \left(y,z,\frac{k  q^{N+1}}{y
z},q^{-N};q\right)_n} {\left(\frac{q}{y},\frac{q}{z},\frac{y z
q^{-N}}{k},  q^{1+N};q\right)_n} \left(\frac{-1}{k}\right)^n
q^{(n^2-n)/2}\left(1+q^{3n}\right) \right).
\end{multline}
\begin{multline}\label{cor3eq2}
\hspace{-4pt}\sum_{n=0}^{\infty}\frac{\left(q\sqrt{k},-q\sqrt{k},k,
\frac{1-\sqrt{1-4k}}{2}q,\frac{1+\sqrt{1-4k}}{2}q;q\right)_n
(q;q^2)_n k^n q^{(n^2-3n)/2}}
{\left(\sqrt{k},-\sqrt{k},\frac{1-\sqrt{1-4k}}{2},
\frac{1+\sqrt{1-4k}}{2},q;q\right)_n (k^2q;q^2)_n}\\
=\frac{1+k}{q}\frac{\left(q^2 k^2, ;q^2\right)_{\infty}}{\left( q
k^2 ;q^2\right)_{\infty}}.
\end{multline}
\begin{equation}\label{cor3eq3}
\hspace{-4pt}\sum_{n=0}^{\infty}
\frac{\left(q\sqrt{k},-q\sqrt{k},k,
\frac{1-\sqrt{1-4k}}{2}q,\frac{1+\sqrt{1-4k}}{2}q;q\right)_n
 (-k)^n q^{3(n^2-n)/2}}
{\left(\sqrt{k},-\sqrt{k},\frac{1-\sqrt{1-4k}}{2},
\frac{1+\sqrt{1-4k}}{2},q;q\right)_n
%\phantom{asdasaasdsdsdd}
}
=0.
\end{equation}
\begin{multline}\label{cor3eq4}
\hspace{-4pt}\sum_{n=0}^{\infty}
\frac{\left(q\sqrt{k},-q\sqrt{k},k,\sqrt{q},
\frac{1-\sqrt{1-4k}}{2}q,\frac{1+\sqrt{1-4k}}{2}q;q\right)_n
 k^n q^{n^2-3n/2}}
{\left(\sqrt{k},-\sqrt{k},k\sqrt{q},\frac{1-\sqrt{1-4k}}{2},
\frac{1+\sqrt{1-4k}}{2},q;q\right)_n \phantom{asdasaasdsdsdd}}\\ =
q^{-1/2}\frac{(k q;q)_{\infty}}{(k\sqrt{q};q)_{\infty}}.
\end{multline}
\end{corollary}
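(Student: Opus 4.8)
The plan is to derive all four identities by inserting the WP-Bailey pair \eqref{H3'dual} (with $a=1$) into the two transformations already in hand: \eqref{cor3eq1} will come from Theorem \ref{fbt1}, and \eqref{cor3eq2}--\eqref{cor3eq4} from Theorem \ref{6t1} after specializing $y$ and $z$. The mechanism throughout is a pair of factorizations that recast the factors of $\wpbeta_n^*(1,k)$ as $q$-Pochhammer quotients. Setting $\mu=\tfrac{1-\sqrt{1-4k}}{2}$ and $\nu=\tfrac{1+\sqrt{1-4k}}{2}$, so that $\mu+\nu=1$ and $\mu\nu=k$, I would record
\[
\frac{1-kq^{2n}}{1-k}=\frac{(q\sqrt{k},-q\sqrt{k};q)_n}{(\sqrt{k},-\sqrt{k};q)_n},\qquad 1-q^n+kq^{2n}=k\,\frac{(\mu q,\nu q;q)_n}{(\mu,\nu;q)_n},
\]
the latter because $(1-\mu q^n)(1-\nu q^n)=1-q^n+kq^{2n}$ while $(1-\mu)(1-\nu)=k$. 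Feeding the second relation into $\wpbeta_n^*(1,k)$ reduces it to $(-1)^n\frac{(\mu q,\nu q;q)_n}{(\mu,\nu;q)_n}\frac{(k;q)_n}{(q;q)_n}k^n q^{(n^2-3n)/2}$.

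For \eqref{cor3eq1} I would substitute \eqref{H3'dual} directly into \eqref{fbt1eq} with $a=1$. On the right the factor $(q/k)^n\wpalpha_n^*(1,k)$ collapses to $(-1/k)^n q^{(n^2-n)/2}(1+q^{3n})$, which is the displayed summand. On the left the transformation's prefactor contributes $\tfrac{1-kq^{2n}}{1-k}$, converted by the first relation into $(q\sqrt{k},-q\sqrt{k};q)_n/(\sqrt{k},-\sqrt{k};q)_n$, while the reduced $\wpbeta_n^*$ contributes the $(\mu q,\nu q;q)_n/(\mu,\nu;q)_n$ and $(k;q)_n/(q;q)_n$ quotients; the coefficient $(-1)^n k^n q^{(n^2-3n)/2}$ absorbs the $q^n$ of \eqref{fbt1eq} to become $(-k)^n q^{(n^2-n)/2}$. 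Collecting the nine numerator against the nine denominator Pochhammer symbols reproduces \eqref{cor3eq1}; this step is pure bookkeeping.

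For \eqref{cor3eq2}--\eqref{cor3eq4} I would use Theorem \ref{6t1}. The left sides are treated as above, now with $(y,z)=(\sqrt{q},-\sqrt{q})$ for \eqref{cor3eq2}, $y,z\to\infty$ for \eqref{cor3eq3}, and $y=\sqrt{q}$, $z\to\infty$ for \eqref{cor3eq4}; here $(\sqrt{q},-\sqrt{q};q)_n=(q;q^2)_n$ and $(k\sqrt{q},-k\sqrt{q};q)_n=(k^2q;q^2)_n$ (or their $z\to\infty$ degenerations) yield the displayed summands, and one checks that the running power of $q$ comes out correctly in each limit. The decisive move is the evaluation of the right side: because $\wpalpha_n^*(1,k)$ is independent of $k$, the observation following Theorem \ref{6t1} lets me rewrite the right side of \eqref{wpeq} as $\frac{(qk,qk/yz;q)_\infty}{(qk/y,qk/z;q)_\infty}$ times the ordinary Bailey sum $\sum_{n}(y,z;q)_n(q/yz)^n\beta_n$ of the $k=0$ specialization of \eqref{H3'dual}. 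That specialization has $\beta_0=1$, $\beta_1=-1/q$, and $\beta_n=0$ for $n\ge2$, so the Bailey sum is the finite rational expression $\tfrac1y+\tfrac1z-\tfrac1{yz}$, which evaluates to $1/q$, $0$, and $q^{-1/2}$ at the three specializations. Combining these with the limits of the infinite product, and using $(qk,-k;q)_\infty=(1+k)(q^2k^2;q^2)_\infty$ for \eqref{cor3eq2}, produces the three right sides; in particular the vanishing of $\tfrac1y+\tfrac1z-\tfrac1{yz}$ as $y,z\to\infty$ is exactly what forces \eqref{cor3eq3} to equal $0$.

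The main obstacle is this last evaluation, together with the trap of performing it incorrectly. If one instead retains the $\wpalpha$-form of the right side of \eqref{wpeq} and passes to $y,z\to\infty$ term by term, the resulting infinite series telescopes to $1$, not $0$, and one is misled into declaring \eqref{cor3eq3} equal to $(qk;q)_\infty/(q;q)_\infty\ne0$; the interchange of limit and summation simply fails for that series. Routing every limit through the \emph{finite} $\beta$-sum avoids the convergence question altogether, so I would organize the argument around the observation following Theorem \ref{6t1} rather than around the Jacobi triple product. The remaining work---matching summands and simplifying the infinite products---is routine $q$-Pochhammer algebra.
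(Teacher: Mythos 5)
Your derivation of \eqref{cor3eq1} --- substituting \eqref{H3'dual} into \eqref{fbt1eq} with $a=1$ and using the factorization of $1-q^n+kq^{2n}$ as $k$ times a quotient of $q$-shifted factorials with bases $\frac{1\pm\sqrt{1-4k}}{2}$ --- is exactly the paper's argument, and your treatment of the left sides of \eqref{cor3eq2}--\eqref{cor3eq4} likewise matches. Where you genuinely diverge is in evaluating the right-hand sides: the paper lets $N\to\infty$ in \eqref{cor3eq1}, specializes $(y,z)$, and sums the resulting $\wpalpha$-series directly by the Jacobi triple product \eqref{JTP}, whereas you invoke the observation following Theorem \ref{6t1} to replace that series by the $\beta$-series of the $k=0$ specialization of \eqref{H3'dual}, which terminates ($\beta_0=1$, $\beta_1=-1/q$, $\beta_n=0$ for $n\ge 2$) and evaluates to $\frac1y+\frac1z-\frac1{yz}$. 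That is a correct and rather clean alternative, and it does reduce every limit to a finite computation. However, the motivation you give for preferring it rests on a miscalculation: with $y,z\to\infty$ the $\wpalpha$-series becomes $1+\sum_{n\ge 1}(-1)^n\bigl(q^{3(n^2-n)/2}+q^{3(n^2+n)/2}\bigr)$, whose partial sums telescope to $(-1)^N q^{3(N^2+N)/2}\to 0$, not to $1$; equivalently, it is the bilateral sum $\sum_{n\in\mathbb{Z}}(-1)^n q^{3(n^2+n)/2}$, and the triple product \eqref{JTP} gives it the factor $(1;q^3)_\infty=0$. So there is no trap, the term-by-term limit yields the correct value $0$, and the paper's JTP route to \eqref{cor3eq3} is perfectly sound. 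Your proof stands as written once that cautionary paragraph is deleted or corrected.
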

\begin{proof}
The identity at \eqref{cor3eq1} follows  upon substituting the pair
from \eqref{H3'dual} into \eqref{fbt1eq}, setting $a=1$, and using
the fact that
\[
1-q^n+kq^{2n} =k \frac{\left(
\frac{1-\sqrt{1-4k}}{2}q,\frac{1+\sqrt{1-4k}}{2}q;q\right)_n}
{\left(\frac{1-\sqrt{1-4k}}{2}, \frac{1+\sqrt{1-4k}}{2};q\right)_n}.
\]

Next, let $N \to \infty$ in \eqref{cor3eq1} to get that
\begin{multline}\label{cor3eq1a}
\hspace{-4pt}\sum_{n=0}^{\infty}\frac{\left(q\sqrt{k},-q\sqrt{k},k,y,z,
\frac{1-\sqrt{1-4k}}{2}q,\frac{1+\sqrt{1-4k}}{2}q;q\right)_n}
{\left(\sqrt{k},-\sqrt{k},\frac{k q}{y},\frac{k
q}{z},\frac{1-\sqrt{1-4k}}{2},
\frac{1+\sqrt{1-4k}}{2},q;q\right)_n}\left(\frac{-k}{y z} \right)^n
q^{\frac{n^2-n}{2}}\\
=\frac{\left(q k, \frac{q k}{y z}, \frac{q }{y}, \frac{q
}{z};q\right)_{\infty}}{\left(\frac{q k}{y}, \frac{q k}{z}, q ,
\frac{q }{y z};q\right)_{\infty}}\left( 1+\sum_{n=1}^{\infty} \frac{
\left(y,z;q\right)_n} {\left(\frac{q}{y},\frac{q}{z};q\right)_n}
\left(\frac{-1}{ y z}\right)^n
q^{(n^2-n)/2}\left(1+q^{3n}\right)\right).
\end{multline}

The identities at \eqref{cor3eq2}, \eqref{cor3eq3} and
\eqref{cor3eq4} follow, respectively, from letting $(y,z)\to
(\sqrt{q}, -\sqrt{q})$, $(y,z)\to (\infty, \infty)$ and $(y,z)\to
(\sqrt{q}, \infty)$, and using the Jacobi triple product identity
\eqref{JTP} to sum the resulting series on the right sides.
\end{proof}

\section{Double-sum identities of the Rogers-Ramanujan-Slater
type}

If $( \alpha_n )_{n\geq 0}$ is any sequence with $\alpha_0=1$, then trivially
\[
\left(\alpha_n, \sum_{j=0}^{n}
\frac{\left(k/a;q\right)_{n-j}(k;q)_{n+j}}{(q;q)_{n-j}(aq;q)_{n+j}}\,\alpha_{j}
\right )
\]
is a WP-Bailey pair. If this pair is substituted into
\eqref{wpeq},  then after switching the order of summation on the
left side and re-indexing, we get the following theorem.

\begin{theorem}\label{6t4}
Let $(\alpha_n)_{n\geq 0}$ be any sequence with $\alpha_0=1$.
Then, subject to suitable convergence conditions,
{\allowdisplaybreaks
\begin{multline}\label{wpeq4}
\sum_{j,n=0}^{\infty} \frac{(1-k
q^{2n+2j})(k/a;q)_{n}(k;q)_{n+2j}(y,z;q)_{n+j}}
{(1-k)(q;q)_n(a q;q)_{n+2j}(q k/y,q k/z;q)_{n+j}}\left( \frac{q a}{y z }\right )^{n+j} \alpha_j\\
 =\frac{(q k,q k/yz,q a/y,q a/z;q)_{\infty}} {(q k/y,q k/z,q a,q
a/yz;q)_{\infty}} \sum_{n=0}^{\infty}\frac{(y,z;q)_{n}}{(q a/y ,q
a/z;q)_n}\left (\frac{q a}{y z}\right)^{n}\alpha_n.
\end{multline}
}
\end{theorem}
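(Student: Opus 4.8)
The plan is to obtain the statement as an immediate consequence of Theorem~\ref{6t1} together with a routine interchange of summation. First I would observe that, for \emph{any} sequence $(\alpha_n)_{n\geq 0}$ with $\alpha_0=1$, the pair $(\wpalpha_n,\wpbeta_n)$ given by $\wpalpha_n=\alpha_n$ and
\[
\wpbeta_n = \sum_{j=0}^{n} \frac{(k/a;q)_{n-j}(k;q)_{n+j}}{(q;q)_{n-j}(aq;q)_{n+j}}\,\alpha_j
\]
is a WP-Bailey pair, since this is precisely the defining relation \eqref{WPpair}. Hence Theorem~\ref{6t1} applies verbatim, and the right-hand side of \eqref{wpeq} is already the right-hand side of \eqref{wpeq4}, because $\wpalpha_n=\alpha_n$. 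The entire content of the theorem therefore lies in rewriting the left-hand side of \eqref{wpeq}.

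Next I would simplify the very-well-poised prefactor appearing on the left of \eqref{wpeq} using the elementary telescoping identity
\[
\frac{(q\sqrt{k},-q\sqrt{k};q)_n}{(\sqrt{k},-\sqrt{k};q)_n}=\frac{(1-\sqrt{k}q^{n})(1+\sqrt{k}q^{n})}{(1-\sqrt{k})(1+\sqrt{k})}=\frac{1-kq^{2n}}{1-k},
\]
so that, after inserting the double-sum expression for $\wpbeta_n$, the left side of \eqref{wpeq} takes the form
\[
\sum_{n=0}^{\infty}\frac{1-kq^{2n}}{1-k}\,\frac{(y,z;q)_n}{(qk/y,qk/z;q)_n}\left(\frac{qa}{yz}\right)^n \sum_{j=0}^{n}\frac{(k/a;q)_{n-j}(k;q)_{n+j}}{(q;q)_{n-j}(aq;q)_{n+j}}\,\alpha_j.
\]

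The heart of the argument is then to interchange the two summations and re-index. For fixed $j$ the outer index runs over $n\geq j$, so I would replace the old summation variable by $n-j$ (equivalently, substitute $n\mapsto n+j$) and track how each factor transforms: $1-kq^{2n}\mapsto 1-kq^{2n+2j}$, $(k/a;q)_{n-j}\mapsto(k/a;q)_n$, $(k;q)_{n+j}\mapsto(k;q)_{n+2j}$, $(q;q)_{n-j}\mapsto(q;q)_n$, $(aq;q)_{n+j}\mapsto(aq;q)_{n+2j}$, while the factors $(y,z;q)_n$, $(qk/y,qk/z;q)_n$, and $(qa/yz)^n$ each acquire the shifted index $n+j$. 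Collecting these yields exactly the summand of \eqref{wpeq4}.

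I expect the only genuine issue to be analytic rather than algebraic: the rearrangement of the double sum must be justified, which is the content of the phrase ``subject to suitable convergence conditions'' in the statement, and holds by absolute convergence for $y$, $z$, $q$ in appropriate ranges. The main obstacle is thus nothing more than careful bookkeeping of the Pochhammer indices under the shift $n\mapsto n+j$; no identity beyond Theorem~\ref{6t1} and the telescoping of the very-well-poised factor is required.
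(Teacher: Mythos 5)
Your proposal is correct and follows exactly the paper's own route: the paper likewise forms the trivial WP-Bailey pair $\bigl(\alpha_n,\ \sum_{j=0}^{n}\tfrac{(k/a;q)_{n-j}(k;q)_{n+j}}{(q;q)_{n-j}(aq;q)_{n+j}}\alpha_j\bigr)$, substitutes it into \eqref{wpeq}, and then switches the order of summation and re-indexes on the left side. Your bookkeeping of the shift $n\mapsto n+j$ and the simplification of the very-well-poised factor to $(1-kq^{2n})/(1-k)$ are both accurate, so nothing is missing.
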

This last equation can be used to derive, almost trivially, a large
number of double-sum series = product identities. Firstly, any
identity on the Slater list can be extended to a double-sum identity
involving the free parameter $k$, and which reverts back to the
original single-sum identity upon setting $k=0$. This is simply done
by inserting the sequence $\alpha_n$ from the same Bailey pair
$(\alpha_n, \beta_n)$, and making the same choices for $y$ and $z$,
as Slater did to derive the original identity. Secondly, we can
choose $(\alpha_n)$ so that the series on the right of \eqref{wpeq4}
becomes one of the series in an identity on the Slater list, so that
the right side once again can be expressed as an infinite product.
Thirdly, we can choose $\alpha_n$ so that the series on the right
side becomes summable via the Jacobi triple product identity or the
quintuple product identity. We illustrate each of these methods of
generating a double-sum series = product by giving an example in
each case (the resulting identities are new).

We first consider the standard pair \textbf{B1} from Slater's
paper \cite{S51}. This pair has
\begin{equation}\label{B1pair}
\alpha_j=
\begin{cases}1,&j=0,\\
q^{3j^2/2-j/2}(-1)^j(1+q^j), &j>0,
\end{cases}
\end{equation}
and leads to the first Rogers-Ramanujan identity.
\begin{equation}\label{RR1}
\sum_{n=0}^{\infty} \frac{q^{n^2}}{(q;q)_{n}} =
\frac{1}{(q,q^4;q^{5})_{\infty}}.
\end{equation}
\begin{corollary}\label{crr1}
For $k \not = 1$,
\begin{multline}\label{B1/2pairpr3}
\sum_{j,n =0}^{\infty}\frac{ (1-k
q^{2n+2j})(1+q^j)(k;q)_{n}(k;q)_{n+2j}q^{(5j^2-j)/2+2nj+n^2}(-1)^j
}{ (1-k)(q;q)_{n}(q;q)_{n+2j}}\\= \frac{(k
q;q)_{\infty}}{(q,q^4;q^5)_{\infty}} +\frac{(k
q;q)_{\infty}}{(q;q)_{\infty}}.
\end{multline}
\end{corollary}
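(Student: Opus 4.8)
The plan is to substitute the sequence $\alpha_j$ from \textbf{B1} (equation \eqref{B1pair}) into the double-sum transformation \eqref{wpeq4} of Theorem \ref{6t4}, take $a=1$, and then let $y,z\to\infty$. First I would record the limiting evaluation that drives everything: as $y\to\infty$ one has $(y;q)_m\sim(-y)^mq^{m(m-1)/2}$ while $(qk/y;q)_m\to 1$, so in both \eqref{wpeq4} and the ordinary Bailey transformation \eqref{Baileyeq} the combination $(y,z;q)_m(qa/yz)^m$ divided by the $qk/y,qk/z$ factors collapses to $a^mq^{m^2}$. With $a=1$ this sends the weight attached to $\alpha_j$ on the left of \eqref{wpeq4} to $\sum_n\frac{(1-kq^{2n+2j})(k;q)_n(k;q)_{n+2j}}{(1-k)(q;q)_n(q;q)_{n+2j}}q^{(n+j)^2}$, and matching exponents $(n+j)^2+3j^2/2-j/2=(5j^2-j)/2+2nj+n^2$ shows that multiplying by $\alpha_j=q^{3j^2/2-j/2}(-1)^j(1+q^j)$ reproduces exactly the summand displayed in \eqref{B1/2pairpr3} for $j>0$.

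The right-hand side I would evaluate using the Rogers--Ramanujan identity itself. In the limit the prefactor of \eqref{wpeq4} tends to $(qk;q)_\infty/(q;q)_\infty$ and its sum tends to $\sum_n q^{n^2}\alpha_n$; since \textbf{B1} has $\beta_n=1/(q;q)_n$, applying \eqref{Baileyeq} in the same limit gives $\sum_n q^{n^2}\alpha_n=(q;q)_\infty\sum_n q^{n^2}/(q;q)_n=(q;q)_\infty/(q,q^4;q^5)_\infty$ by \eqref{RR1}. Hence the raw substitution of \textbf{B1} into \eqref{wpeq4} yields $\frac{(kq;q)_\infty}{(q,q^4;q^5)_\infty}$, the first term on the right of \eqref{B1/2pairpr3}.

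The one genuine subtlety --- and the step I expect to be the crux --- is that \textbf{B1} carries $\alpha_0=1$, whereas the uniform summand written in \eqref{B1/2pairpr3} evaluates at $j=0$ to $(1+q^0)(-1)^0=2$. Thus the stated left side exceeds the \eqref{wpeq4}-substitution by exactly one extra copy of the $j=0$ block, namely $\sum_{n\ge0}\frac{(1-kq^{2n})(k;q)_n^2}{(1-k)(q;q)_n^2}q^{n^2}$. I would evaluate this leftover sum by a second, independent use of the machinery: feed the WP-Bailey pair \textbf{H6'} (equation \eqref{H6'ab}, for which $\wpalpha_n=0$ for $n\ge1$ while $\wpalpha_0=1$, and $\wpbeta_n=(k,k;q)_n/(q,q;q)_n$) into \eqref{wpeq} of Theorem \ref{6t1} with $a=1$ and $y,z\to\infty$. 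Using $(q\sqrt k,-q\sqrt k;q)_n/(\sqrt k,-\sqrt k;q)_n=(1-kq^{2n})/(1-k)$, the left side of \eqref{wpeq} becomes precisely this leftover sum, while the right side collapses to $(qk;q)_\infty/(q;q)_\infty$ because only the $n=0$ term of $\sum_n q^{n^2}\wpalpha_n$ survives. Adding this $\frac{(kq;q)_\infty}{(q;q)_\infty}$ to the main contribution produces the two-term product on the right of \eqref{B1/2pairpr3}.

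Throughout, the only care required is in justifying the $y,z\to\infty$ limits termwise --- the ``suitable convergence conditions'' of Theorems \ref{6t1} and \ref{6t4} --- which is routine. The essential point is the bookkeeping observation that the second product term on the right is exactly the price of writing the left side with a single uniform summand rather than respecting the boundary value $\alpha_0=1$, and that this price is itself summed in closed form by the trivial pair \textbf{H6'}.
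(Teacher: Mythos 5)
Your proposal is correct and follows the same architecture as the paper's proof: substitute the \textbf{B1} sequence into \eqref{wpeq4} with $a=1$ and $y,z\to\infty$, and account for the fact that the uniform summand in \eqref{B1/2pairpr3} evaluates to $2$ at $j=0$ while $\alpha_0=1$, so the extra $j=0$ block is exactly the second product term. The two places where you diverge are in how the closed forms are justified. For the main term, the paper sums $\sum_n q^{n^2}\alpha_n$ directly by the Jacobi triple product \eqref{JTP} to get $(q^2,q^3,q^5;q^5)_\infty$ and hence $(kq;q)_\infty/(q,q^4;q^5)_\infty$, whereas you route through \eqref{Baileyeq} and the Rogers--Ramanujan identity \eqref{RR1}; these are equivalent (the latter is proved from the former), so nothing is gained or lost. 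For the leftover sum $\sum_n \frac{(1-kq^{2n})(k,k;q)_n}{(1-k)(q,q;q)_n}q^{n^2}$, the paper invokes Jackson's ${}_6\phi_5$ summation \eqref{6phi5eq} with $a=b=k$ and $c,d\to\infty$, while you feed the trivial pair \textbf{H6'} (whose WP-Bailey property is immediate since $\wpalpha_n=\delta_{n,0}$) into \eqref{wpeq} with $a=1$, $y,z\to\infty$. Your route is self-contained within the paper's own machinery and arguably cleaner conceptually, though it is really the same identity in disguise: substituting the unit WP-Bailey pair into \eqref{wpeq} \emph{is} this limiting case of Jackson's sum, and \eqref{wpeq} itself rests on such summations. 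Both versions of each step are sound, and your bookkeeping of exponents and of the $j=0$ correction matches the paper's exactly.
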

\begin{proof}
First, set $a=1$ and let $y, z \to \infty$ in \eqref{wpeq4} to get
{\allowdisplaybreaks
\begin{equation}\label{wpeq4B1}
\sum_{j,n=0}^{\infty} \frac{(1-k q^{2n+2j})(k;q)_{n}(k;q)_{n+2j}}
{(1-k)(q;q)_n( q;q)_{n+2j}}q^{(n+j)^2} \alpha_j
 =\frac{(q k;q)_{\infty}} {(q ;q)_{\infty}} \sum_{n=0}^{\infty}
 q^{n^2}\alpha_n.
\end{equation}
}

Next, substitute for $(\alpha_n)$, writing $\alpha_0=1$ as
\[
\alpha_0=1=q^{3(0^2)/2-0/2}(-1)^0(1+q^0)-1,
\]
and use \eqref{JTP} to sum the right side, to get
\begin{multline*}
\sum_{j,n =0}^{\infty}\frac{ (1-k
q^{2n+2j})(1+q^j)(k;q)_{n}(k;q)_{n+2j}q^{(5j^2-j)/2+2nj+n^2}(-1)^j
}{ (1-k)(q;q)_{n}(q;q)_{n+2j}}\\
-\sum_{n =0}^{\infty}\frac{ (1-k q^{2n})(k,k;q)_{n}q^{n^2} }{
(1-k)(q,q;q)_{n}}=\frac{(q k;q)_{\infty}} {(q
;q)_{\infty}}(q^2,q^3,q^5;q^5)_{\infty} =\frac{(k
q;q)_{\infty}}{(q,q^4;q^5)_{\infty}}.
\end{multline*}
The fact that
\[
\sum_{n =0}^{\infty}\frac{ (1-k q^{2n})(k,k;q)_{n}q^{n^2} }{
(1-k)(q,q;q)_{n}} =\frac{(k q;q)_{\infty}}{(q;q)_{\infty}}
\]
follows as a special case of the following identity, which is a
special case of an identity due to Jackson \cite{J21} (set $a=k$,
$b=k$ and let $c, d \to \infty$).
\begin{align}\label{6phi5eq}
&\PHI{6}{5}\phiargs{a,q\sqrt{a},-q\sqrt{a},b,c,d}{
\sqrt{a},-\sqrt{a},aq/b,aq/c,aq/d}{q}{\frac{aq}{bcd}}=
\frac{(aq,aq/bc,aq/bd,aq/cd;q)_\infty}{(aq/b,aq/c,aq/d,aq/bcd;q)_\infty}.
\end{align}
\end{proof}

\begin{corollary}
{\allowdisplaybreaks
\begin{multline}\label{wpeq4B1'}
\sum_{j,n=0}^{\infty} \frac{(1-k
q^{2n+2j})\left(k/a;q\right)_{n}(k;q)_{n+2j}(y q^j,z
q^j;q)_{n}\left(q a/y,q a/z;q\right)_{j}} {(1-k)(q;q)_n(a
q;q)_{n+2j} \left(q k/y,q k/z;q\right)_{n+j} (q;q)_j}\\\times \left(
\frac{q a}{y z }\right )^{n} q^{j^2}
 =\frac{(q k,q k/yz,q a/y,q a/z;q)_{\infty}} {(q k/y,q k/z,q a,q
a/yz;q)_{\infty}} \frac{1}{(q,q^4;q^5)_{\infty}}.
\end{multline}
}
\end{corollary}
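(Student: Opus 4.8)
The plan is to specialize Theorem~\ref{6t4}: I would choose the free sequence $(\alpha_n)$ so that the series on the right of \eqref{wpeq4} reduces to the first Rogers--Ramanujan sum in \eqref{RR1}, whose product evaluation then supplies the right-hand side of \eqref{wpeq4B1'}. Matching the infinite-product prefactors in \eqref{wpeq4} and \eqref{wpeq4B1'} shows that I need
\[
\sum_{n=0}^{\infty}\frac{(y,z;q)_{n}}{(q a/y ,q a/z;q)_n}\left(\frac{q a}{y z}\right)^{n}\alpha_n = \sum_{n=0}^\infty \frac{q^{n^2}}{(q;q)_n},
\]
and matching this term by term forces
\[
\alpha_j = \frac{q^{j^2}}{(q;q)_{j}}\,\frac{(q a/y ,q a/z;q)_j}{(y,z;q)_{j}}\left(\frac{y z}{q a}\right)^{j}.
\]
I would first note that $\alpha_0=1$, so this is an admissible sequence for Theorem~\ref{6t4}.

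Next I would verify the right-hand side. With this $\alpha_n$, every factor in the right-hand series of \eqref{wpeq4} cancels except $q^{n^2}/(q;q)_n$, so by \eqref{RR1} the right-hand side equals the product prefactor times $1/(q,q^4;q^5)_\infty$, which is precisely the right side of \eqref{wpeq4B1'}.

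Then I would simplify the left-hand side by substituting $\alpha_j$ into the double sum of \eqref{wpeq4}. The powers of $qa/(yz)$ combine as $(qa/yz)^{n+j}(yz/qa)^{j}=(qa/yz)^{n}$, and each coupled symbol splits via $(x;q)_{n+j}=(x;q)_j\,(xq^j;q)_n$, so that $(y,z;q)_{n+j}/(y,z;q)_j=(yq^j,zq^j;q)_n$; the factor $(qk/y,qk/z;q)_{n+j}$ is untouched. After these cancellations the summand is exactly that of \eqref{wpeq4B1'}.

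The hard part will be only the index bookkeeping in this last step: since $\alpha$ is indexed by $j$ while the factors $(y,z;q)_{n+j}$ and $(qa/yz)^{n+j}$ couple $n$ and $j$, I must split every $(n+j)$-indexed factor so that its $j$-part cancels the corresponding $j$-dependence introduced by $\alpha_j$, leaving the clean $(yq^j,zq^j;q)_n$ and $q^{j^2}$ appearing in the target. No identity beyond \eqref{RR1} is needed, so this is essentially a bookkeeping verification rather than a genuine difficulty.
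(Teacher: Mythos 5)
Your proposal is correct and matches the paper's proof exactly: the paper also sets $\alpha_n =\frac{(q a/y ,q a/z;q)_n}{(y,z;q)_{n}}\left(\frac{y z}{q a}\right)^{n} \frac{q^{n^2}}{(q;q)_n}$ in \eqref{wpeq4} so that the right-hand series becomes the Rogers--Ramanujan sum \eqref{RR1}, with the left side then simplifying by the same splitting of the $(n+j)$-indexed factors.
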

\begin{proof}
Set
\[
\alpha_n =\frac{(q a/y ,q a/z;q)_n}{(y,z;q)_{n}}\left (\frac{y z}{q
a}\right)^{n} \frac{q^{n^2}}{(q;q)_n}
\]
in \eqref{wpeq4}, so that the series on the left side of
\eqref{wpeq4} becomes the series on the right side of \eqref{RR1}.
\end{proof}
Corollary \ref{crr1} is an extension of the first Rogers-Ramanujan
identity, since setting $k=0$ recovers this identity, after some
series manipulations. It is possible to generalize the identity in
Corollary \ref{crr1} as follows (Corollary \ref{crr1} is the case
$s=5/2$, $r=1/2$ of the following corollary).
\begin{corollary}\label{crs}
Let $s$ be a positive rational number and $r$ a rational number. For
$k \not = 1$,
\begin{multline}\label{rseq}
\sum_{j,n =0}^{\infty}\frac{ (1-k
q^{2n+2j})(1+q^{2rj})(k;q)_{n}(k;q)_{n+2j}q^{sj^2-jr+2nj+n^2}(-1)^j
}{ (1-k)(q;q)_{n}(q;q)_{n+2j}}\\= \frac{(k
q;q)_{\infty}(q^{s-r},q^{s+r},q^{2s};q^{2s})_{\infty}}{(q;q)_{\infty}}
+\frac{(k q;q)_{\infty}}{(q;q)_{\infty}}.
\end{multline}
\end{corollary}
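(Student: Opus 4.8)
The plan is to follow exactly the template established in the proof of Corollary~\ref{crr1}, with the Bailey pair \textbf{B1} replaced by a one-parameter family indexed by $s$ and $r$. First I would insert the sequence
\[
\alpha_j=\begin{cases}1,&j=0,\\ q^{sj^2-jr}(-1)^j(1+q^{2rj}),&j>0,\end{cases}
\]
into equation~\eqref{wpeq4B1}, which is the $a=1$, $y,z\to\infty$ specialization of \eqref{wpeq4} already recorded in the excerpt. The left side of \eqref{wpeq4B1} becomes, after expanding $\alpha_j$, precisely the double sum appearing on the left of \eqref{rseq}; the only bookkeeping issue is that the closed form for $\alpha_j$ when $j>0$ does not give the correct value $\alpha_0=1$ at $j=0$ (it would give $2$), so I would write $\alpha_0=1=q^{0}(-1)^0(1+q^0)-1$ exactly as is done in the \textbf{B1} argument, thereby splitting off a correction term.

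Next I would evaluate the right side $\frac{(qk;q)_\infty}{(q;q)_\infty}\sum_{n\geq0}q^{n^2}\alpha_n$. With the ``minus one'' convention for $\alpha_0$, the sum $\sum_{n\geq 0}q^{n^2}\alpha_n$ splits as
\[
\sum_{n=-\infty}^{\infty}q^{sn^2-nr}(-1)^n \;-\;1,
\]
where the bilateral series is obtained by combining the $(1+q^{2rj})$ factor with $q^{sj^2-jr}$ to fold the $n>0$ terms together with their mirror images. The key step is then to recognize this bilateral theta series as an instance of the Jacobi triple product \eqref{JTP}: after setting $q\mapsto q^{2s}$ and $x=-q^{s-r}$ (so that $x^n q^{sn^2}=(-1)^n q^{sn^2-nr}$, up to matching exponents), \eqref{JTP} gives
\[
\sum_{n=-\infty}^{\infty}(-1)^n q^{sn^2-nr}=(q^{s-r},q^{s+r},q^{2s};q^{2s})_\infty .
\]
Multiplying through by $\frac{(qk;q)_\infty}{(q;q)_\infty}$ and moving the correction term $-\frac{(qk;q)_\infty}{(q;q)_\infty}$ to the other side reproduces exactly the right-hand side of \eqref{rseq}, namely $\frac{(kq;q)_\infty(q^{s-r},q^{s+r},q^{2s};q^{2s})_\infty}{(q;q)_\infty}+\frac{(kq;q)_\infty}{(q;q)_\infty}$. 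Finally I would note that the correction sum $\sum_n q^{n^2}\cdot(\text{the }-1)$ contributes the single extra term $-\frac{(kq;q)_\infty}{(q;q)_\infty}\cdot 1$ on the product side, which is why that second summand appears; this is the analogue of the $+\frac{(kq;q)_\infty}{(q;q)_\infty}$ term in \eqref{B1/2pairpr3}.

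The main obstacle, such as it is, is purely notational: I must verify that the exponent arithmetic in the Jacobi triple product substitution lines up, i.e.\ that with $q\mapsto q^{2s}$ and the appropriate choice of $x$ the generic term $x^n q^{2s\cdot n^2/2}$ matches $(-1)^n q^{sn^2-nr}$ and hence the three infinite products emerge with moduli $q^{2s}$ and arguments $q^{s\mp r}$. There is also a mild convergence caveat to state, requiring $s>0$ (so $q^{2s}$ is a legitimate base with $|q^{2s}|<1$) and $r$ such that the series converges, which is already signaled by the hypotheses ``$s$ a positive rational number'' and ``$r$ a rational number''; one should check that the ``suitable convergence conditions'' of Theorem~\ref{6t4} are met for these choices. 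Once the triple-product identification is confirmed, the rest is the same formal manipulation as in Corollary~\ref{crr1}, and specializing $s=5/2$, $r=1/2$ recovers that corollary.
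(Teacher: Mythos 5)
Your overall strategy is the paper's: generalize the \textbf{B1} computation by feeding a theta-type sequence into \eqref{wpeq4B1} and summing the right side with the triple product. But the sequence you insert is off by a factor of $q^{j^2}$. The kernel of \eqref{wpeq4B1} already carries $q^{(n+j)^2}=q^{n^2+2nj+j^2}$, so to produce the exponent $sj^2-jr+2nj+n^2$ on the left of \eqref{rseq} you must take $\alpha_j=q^{(s-1)j^2-jr}(-1)^j(1+q^{2rj})$ (which is exactly the substitution the paper records); with your $\alpha_j=q^{sj^2-jr}(-1)^j(1+q^{2rj})$ the left side acquires the exponent $(s+1)j^2-jr+2nj+n^2$ and the right side folds to $\sum_{n=-\infty}^{\infty}(-1)^nq^{(s+1)n^2-nr}$, so neither of your two key claims (``the left side becomes precisely the double sum of \eqref{rseq}'' and ``$\sum_{n\geq 0}q^{n^2}\alpha_n$ splits as $\sum_{n=-\infty}^{\infty}(-1)^nq^{sn^2-nr}-1$'') is true for the $\alpha_j$ you actually wrote down.

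Second, your bookkeeping of the correction term is wrong. With the true $\alpha_0=1$ one has $\sum_{n\geq 0}q^{n^2}\alpha_n=\sum_{n=-\infty}^{\infty}(-1)^nq^{sn^2-nr}$ exactly: the $n=0$ term of the bilateral series is already $1$, so there is no ``$-1$'' on the product side. The extra summand $\frac{(kq;q)_{\infty}}{(q;q)_{\infty}}$ in \eqref{rseq} comes instead from the series side: writing $\alpha_0=2-1$ turns the left side of \eqref{wpeq4B1} into the displayed double sum minus the $j=0$ single sum $\sum_{n\geq 0}\frac{(1-kq^{2n})(k,k;q)_nq^{n^2}}{(1-k)(q,q;q)_n}$, and that single sum must be evaluated as $\frac{(kq;q)_{\infty}}{(q;q)_{\infty}}$ via Jackson's ${}_6\phi_5$ summation \eqref{6phi5eq} before it can be moved to the right with a plus sign. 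Your proposal omits this ${}_6\phi_5$ step entirely and instead attributes the term to a ``$-1$'' on the triple-product side; besides being unjustified, that accounting would deliver $-\frac{(kq;q)_{\infty}}{(q;q)_{\infty}}$ rather than $+\frac{(kq;q)_{\infty}}{(q;q)_{\infty}}$. Both defects are repairable, but each must be fixed before the argument goes through.
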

\begin{proof}
The proof is similar to the proof of Corollary \ref{crr1}, except we
make the substitution
\[
\alpha_n=q^{(s-1)n^2-nr}(-1)^n(1+q^{2nr})
\]
in \eqref{wpeq4B1}.
\end{proof}

\section{WP Generalizations of the Multiparameter Bailey Pairs}
In~\cite{S07}, the second author showed that more than half of
the identities in
Slater's list could be recovered by specializing
parameters in just three general Bailey pairs together with
some $q$-difference equations.

The \emph{standard multiparameter Bailey pair (SMBP)}~\cite{S07} is
defined as follows:

Let
\begin{multline} \label{SMBPdef}
\alpha_{n}^{(d,e,h)}(a,q):=
   \frac{(-1)^{n/d} a^{(h/d-1)n/e} q^{(h/d-1+1/2d)n^2/e - n/2e} (1-a^{1/e} q^{2n/e})
   }{(1-a^{1/e}) (q^{d/e};q^{d/e})_{n/d}} \\
            \times  (a^{1/e};q^{d/e})_{n/d}\ \chi(d\mid n) ,
 \end{multline} where
\[ \chi(P(n,d))= \left\{
  \begin{array}{ll}
  1 &\mbox{if $P(n,d)$ is true,}\\
  0 &\mbox{if $P(n,d)$ is false,}
  \end{array} \right.
\]
 and let $\beta_{n}^{(d,e,h)} (a,q)$ be determined by \eqref{bpeq}.

The \emph{Euler multiparameter Bailey pair (EMBP)} is given by
\begin{equation} \label{EMBPdef}
\widetilde{\alpha}_{n}^{(d,e,h)}(a,q):=q^{n(d-n)/2de} a^{-n/de}
\frac{(-a^{1/e};q^{d/e})_{n/d}}
{(-q^{d/e};q^{d/e})_{n/d}}  \alpha_{n}^{(d,e,h)}(a,q)
 \end{equation} with $\widetilde{\beta}_{n}^{(d,e,h)} (a,q)$
determined by \eqref{bpeq},
 and the \emph{Jackson-Slater multiparameter Bailey pair (JSMBP)} by
\begin{equation} \label{JSMBPdef}
\bar{\alpha}_{n}^{(d,e,k)}(a,q):= (-1)^{n/d} q^{-n^2/2de}
\frac{(q^{d/2e};q^{d/e})_{n/d}}
{(a^{1/e} q^{d/2e}; q^{d/e})_{n/d}} \alpha_{n}^{(d,e,h)}(a,q)
 \end{equation} with $\bar{\beta}_{n}^{(d,e,h)} (a,q)$ determined by
\eqref{bpeq}.

Clearly each of the $\alpha$'s in~\eqref{SMBPdef}--\eqref{JSMBPdef}
could be inserted into~\eqref{WPpair} instead of~\eqref{bpeq} to
produce WP generalizations of the multiparameter Bailey pairs.

Let us therefore define
\begin{align}
   \wpalpha_{n}^{(d,e,h)}(a,k,q) &:= \alpha_{n}^{(d,e,h)}(a,q),  \label{SWPalpha}\\
   \widetilde{\wpalpha}_{n}^{(d,e,h)}(a,k, q) &:=\widetilde{\alpha}_{n}^{(d,e,h)}(a,q), \label{EWPalpha}\\
   \bar{\wpalpha}_{n}^{(d,e,k)}(a,k, q) & :=\bar{\alpha}_{n}^{(d,e,k)}(a,q), \label{JSWPalpha}
\end{align}
and employ~\eqref{WPpair} to obtain the following corresponding
$\wpbeta$'s:
%{\allowdisplaybreaks
\begin{multline}
\wpbeta_n^{(d,e,h)}(a^e, k, q^e) = \frac{(k, k/a^e; q^e)_n}{(q^e, a^e q^e;q^e)_n} \\
 \times
   \sum_{r=0}^{\lfloor n/d \rfloor} \frac{(a;q^d)_r (1-aq^{2dr})
    (q^{-en}, k q^{en}; q^e)_{dr} }
   {(q^d;q^d)_r (1-a) (a^e q^{e(n+1)}, k^{-1}a^e q^{e(1-n)}; q^e)_{dr} }\\
   \times (-1)^{r} a^{(h-d+ed)r} k^{-dr} q^{(2h-2d+1)dr^2/2 +
   (2e -1)dr/2} \label{SWPbeta}
\end{multline}
%}
%{\allowdisplaybreaks
\begin{multline}
\widetilde{\wpbeta}_n^{(d,e,h)}(a^e,k, q^e)
 = \frac{(k , k/a^e; q^e)_n }{(q^e, a^e q^e;q^e)_n} \\
 \times
   \sum_{r=0}^{\lfloor n/d \rfloor} \frac{(a^2;q^{2d})_r (1-aq^{2dr})
   (q^{-en}, k q^{en}; q^e)_{dr} }
   {(q^{2d};q^{2d})_r (1-a) (a^e q^{e(n+1)}, k^{-1} a^e q^{e(1-n)}; q^e)_{dr} }\\
   \times (-1)^{r} a^{(h-d-1+ed)r} k^{-dr} q^ {(h - d )dr^2 +
   edr} \label{EWPbeta}
\end{multline}
%}
%{\allowdisplaybreaks
\begin{multline}
\bar{\wpbeta}_n^{(d,e,h)}(a^e,k, q^e)
= \frac{(k, k/a^e;q^e)_n}{(q^e, a^e q^e;q^e)_n} \\
\times
   \sum_{r=0}^{\lfloor n/d \rfloor} \frac{(a, q^{d/2} ;q^d)_r (1-aq^{2dr})
   (q^{-en}, k q^{en}; q^e)_{dr}}
   {(q^d, aq^{d/2};q^d)_r (1-a) (a^e q^{e(n+1)}, k^{-1}a^e q^{e(1-n)}; q^e)_{dr} }\\
   \times a^{(h-d+ed)r} k^{-dr} q^ {(h - d)dr^2 +
   (2e-1)dr/2}. \label{JSWPbeta}
\end{multline}
%}
Thus each of $\Big( \wpalpha_n^{(d,e,h)}(a,k;q),
\wpbeta_n^{(d,e,h)}(a,k;q) \Big)$, $\Big(
\widetilde{\wpalpha}_n^{(d,e,h)}(a,k,q),$\\ $
\widetilde{\wpbeta}_n^{(d,e,h)}(a,k,q) \Big)$, and $\Big(
\bar{\wpalpha}_n^{(d,e,h)}(a,k,q), \bar{ \wpbeta}_n^{(d,e,h)}(a,k,q)
\Big)$  is a WP Bailey pair. Note that the series in each
of~\eqref{SWPbeta}--\eqref{JSWPbeta} may be expressed as a limiting
case of a very-well-poised ${}_{t+1}\phi_t$ basic hypergeometric
series, where \[ t = |2h-2d+1|+2ed +2,\] and as such is either summable
or transformable via standard formulas found in Gasper and Rahman's
book~\cite{GR04}.

\begin{proposition}
The WP multiparameter Bailey pairs \\$\Big(
\wpalpha_n^{(1,1,1)}(a,k,q),$$ \wpbeta_n^{(1,1,1)}(a,k,q) \Big)$, $\Big(
\widetilde{\wpalpha}_n^{(1,1,1)}(a,k,q),
\widetilde{\wpbeta}_n^{(1,1,1)}(a,k,q) \Big)$, and\\ $\Big(
\bar{\wpalpha}_n^{(1,1,1)}(a,k,q), \bar{ \wpbeta}_n^{(1,1,1)}(a,k,q)
\Big)$ are given by {\allowdisplaybreaks
\begin{align}
\wpalpha_n^{(1,1,1)} (a,k,q) &= \frac{ (-1)^n q^{n(n-1)/2} (1-aq^{2n}) (a;q)_n}{(1-a)(q;q)_n} \label{S111alpha}\\
\wpbeta_n^{(1,1,1)} (a,k,q) &   %= \frac{ (k;q)_n (k/a;q)_n }{ (q;q)_n (a q^{1-n}/k;q)_n }
=  \frac{ (-1)^n k^{n} q^{\binom n2} (k;q)_n } { a^n (q;q)_n } \label{S111beta}\\
\widetilde{\wpalpha}_n^{(1,1,1)} (a,k,q) &=
\frac{ (-1)^n a^{-n} (1-aq^{2n}) (a^2;q^2)_n}{(1-a)(q^2;q^2)_n} \label{E111alpha}\\
\widetilde{\wpbeta}_n^{(1,1,1)} (a,k,q) & %= \frac{ (k)_n (k/a)_n (-q^{1-n}/k)_n} {(q^2;q^2)_n(aq^{1-n}/k)_n}
 = \frac{(-1)^n a^{-n} (k^2;q^2)_n}{(q^2;q^2)_n}
\label{E111beta} \\
\bar{ \wpalpha}_n^{(1,1,1)} (a,k,q) &= \frac{  q^{-n/2} (1-aq^{2n}) (a, \sqrt{q}; q)_n}{(1-a)
(q, a\sqrt{q})_n} \label{JS111alpha} \\
\bar{\wpbeta}_n^{(1,1,1)} (a,k,q) & %= \frac{(k)_n (k/a)_n (aq^{\frac 12-n}/k)_n }{ (q)_n (a\sqrt{q})_n(a q^{1-n}/k)_n  }
 = \frac{ (k, k\sqrt{q}/a;q)_n }{ (q, a\sqrt{q};q)_n } q^{-n/2}
\label{JS111beta}
\end{align}
}
\end{proposition}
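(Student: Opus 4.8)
The plan is to split the six identities into two groups: the three $\wpalpha$ formulas, which follow by direct specialization of the defining formulas, and the three $\wpbeta$ formulas, which each reduce to a single very-well-poised summation. First I would establish \eqref{S111alpha}, \eqref{E111alpha}, \eqref{JS111alpha} by simply putting $(d,e,h)=(1,1,1)$ into \eqref{SMBPdef}, \eqref{EMBPdef}, \eqref{JSMBPdef}. In \eqref{SMBPdef} the divisibility constraint $\chi(1\mid n)$ is identically $1$ and the exponent of $a$ vanishes, so \eqref{S111alpha} drops out immediately. For the other two I would use the factorizations $(a^2;q^2)_n=(a;q)_n(-a;q)_n$ and $(q^2;q^2)_n=(q;q)_n(-q;q)_n$, together with the cancellations of $q$-powers $q^{n(1-n)/2}q^{n(n-1)/2}=1$ (EMBP) and $q^{-n^2/2}q^{n(n-1)/2}=q^{-n/2}$ (JSMBP). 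These are routine.

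The substance is in the $\wpbeta$'s. Setting $(d,e,h)=(1,1,1)$ in \eqref{SWPbeta}, \eqref{EWPbeta}, \eqref{JSWPbeta} collapses each $\lfloor n/d\rfloor$-sum to a sum over $0\le r\le n$ whose summand is, in every case, very-well-poised in $a$: the factor $(1-aq^{2r})/(1-a)$ supplies the pair $q\sqrt a,-q\sqrt a$, while $q^{-n}$ and $kq^n$ supply two of the remaining numerator parameters, with matching denominators $aq^{n+1}$ and $aq^{1-n}/k$. Concretely, for the SMBP the third parameter runs off to infinity and the sum is the $d\to\infty$ instance of a terminating ${}_6\phi_5$ with $b=q^{-n}$, $c=kq^n$; for the EMBP it is a full ${}_6\phi_5$ with $b=-a$ (so that $aq/b=-q$ pairs with the $(-q;q)_r$ arising from $(q^2;q^2)_r=(q;q)_r(-q;q)_r$, after rewriting $(a^2;q^2)_r=(a;q)_r(-a;q)_r$), $c=q^{-n}$, $d=kq^n$; and for the JSMBP it is a ${}_6\phi_5$ with $b=\sqrt q$, $c=q^{-n}$, $d=kq^n$. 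In each case I would verify, by matching exponents, that the residual power of $a$, $k$, $q$ multiplying the summand is exactly the required argument $aq/bcd$. Hence each sum evaluates in closed form by the ${}_6\phi_5$ summation \eqref{6phi5eq}.

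It then remains to simplify the resulting products to the stated closed forms. Multiplying the evaluated sum by the prefactor $(k,k/a;q)_n/(q,aq;q)_n$ and telescoping the infinite products into finite Pochhammer symbols, I would use the reversal identity $(a;q)_n=(-a)^n q^{\binom n2}(q^{1-n}/a;q)_n$ to rewrite the offending factor $(aq^{1-n}/k;q)_n$, and so recover \eqref{S111beta}, \eqref{E111beta}, \eqref{JS111beta}. The SMBP case is cleanest, since $b\to\infty$ annihilates all but two products and gives $\dfrac{(aq;q)_n}{(aq^{1-n}/k;q)_n}=(-k/a)^n q^{\binom n2}\dfrac{(aq;q)_n}{(k/a;q)_n}$ directly, whence \eqref{S111beta}.

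The hard part will be precisely this final simplification in the EMBP and JSMBP cases, where the extra finite parameter $b$ keeps several products in play and the bookkeeping of which infinite products telescope to finite $q$-Pochhammer symbols is delicate; a low-order sanity check (for instance $n=1$) is a useful guard against sign and exponent slips. As an alternative route that avoids \eqref{SWPbeta}--\eqref{JSWPbeta} altogether, one may insert each explicit $\wpalpha_n^{(1,1,1)}$ into the hypergeometric form of the defining relation \eqref{WPpair} and verify that the right-hand side reproduces the claimed $\wpbeta_n^{(1,1,1)}$; this produces the identical ${}_6\phi_5$ and the same summation, so the two approaches coincide.
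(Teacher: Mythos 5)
Your proposal matches the paper's proof: the $\wpalpha$'s are obtained by direct substitution of $d=e=h=1$ into the definitions, and each $\wpbeta$ is evaluated by recognizing the sum as a (limiting case of a) very-well-poised ${}_6\phi_5$ and applying Jackson's summation \eqref{6phi5eq}. Your write-up simply fills in the parameter identifications and simplifications that the paper leaves implicit.
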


\begin{proof}
Each of the $\alpha$'s is a direct substitution into the definition with $d=e=h=1$.
Each of the $\beta$'s follows from Jackson's summation of a very-well-poised
${}_6 \phi_5$~\cite[Eq. (II.21)]{GR04}.
\end{proof}

 We may now use these WP Bailey pairs to derive WP generalizations of Rogers-Ramanujan-Slater
type identities.
\begin{corollary}
\begin{gather}
 \sum_{n=0}^\infty  \frac{  (1-kq^{2n})  (k;q)_n   } {(1-k)  (q;q)_n  }
 (-1)^n k^{n} q^{n(3n-1)/2}= (kq;q)_\infty \label{S111WPWBL}
 \\
 \sum_{n=0}^\infty \frac{ (1-kq^{4n}) (-q,k;q^2)_n  }{ (1-k) (-kq, q^2;q^2)_n }
   (-1)^n k^n q^{2n^2-n} = \frac{ (kq^2;q^2)_\infty}{ (-kq;q^2)_\infty} \label{S111WPTBL}\\
 \sum_{n=0}^\infty \frac{ (1-kq^{2n}) (-1,k ;q)_n  }{ (1-k) (-kq, q;q)_n }
   (-1)^n k^n q^{n^2} = \frac{ (kq;q)_\infty}{ (-kq;q)_\infty}   \label{S111WPSSBL}\\
  \sum_{n=0}^\infty \frac{ (1-kq^{2n}) (k^2;q^2)_n }{  (1-k)(q^2;q^2)_n} (-1)^n q^{n^2} =
  {(kq;q)_\infty}{(q;q^2)_\infty} \label{E111WPWBL} \\
   \sum_{n=0}^\infty \frac{ (1-kq^{4n}) (-q;q^2)_n (k^2;q^4)_n }{  (1-k)(-kq;q^2)_n(q^4;q^4)_n}
   (-1)^n q^{n^2}=
  \frac{(kq^2;q^2)_\infty (q;q)_\infty }{(-kq;q^2)_\infty (q^4;q^4)_\infty}
    \label{E111WPTBL}\\
   \sum_{n=0}^\infty \frac{ (1-kq^{4n}) (k;q)_{2n} }{  (1-k) (q;q)_{2n} }
    q^{2n^2-n}=
  \frac{(kq^2;q^2)_\infty}{(q;q^2)_\infty}\label{JS111WPWBL}\\
   \sum_{n=0}^\infty \frac{ (1-kq^{4n}) (-q;q^2)_n (k;q)_{2n} }{  (1-k) (-kq;q^2)_n (q;q)_{2n} }
    q^{n^2-n}=
  \frac{(kq^2, -1 ;q^2)_\infty}{(-kq, q;q^2)_\infty} \label{JS111WPTBL}
 \end{gather}
\end{corollary}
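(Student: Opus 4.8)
The plan is to feed each of the three WP-Bailey pairs of the preceding proposition, \eqref{S111alpha}--\eqref{JS111beta}, into the limiting transformation \eqref{wpeq} of Theorem \ref{6t1}, and then to specialize $a$, $y$, $z$ (and, where convenient, to replace $q$ by $q^2$) so that both sides collapse to the stated closed forms. The observation that makes this tractable is that the very-well-poised factor on the left of \eqref{wpeq} simplifies: $\frac{(q\sqrt{k},-q\sqrt{k};q)_n}{(\sqrt{k},-\sqrt{k};q)_n}=\frac{1-kq^{2n}}{1-k}$, which already accounts for the prefactor $(1-kq^{2n})/(1-k)$, or $(1-kq^{4n})/(1-k)$ after $q\mapsto q^2$, appearing in every identity. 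Throughout I would set $a=1$; for \eqref{S111WPTBL}, \eqref{E111WPTBL}, \eqref{JS111WPWBL} and \eqref{JS111WPTBL} I would first replace $q$ by $q^2$ (this is essentially forced for the Jackson--Slater pair, whose $\bar{\wpbeta}$ carries a $\sqrt{q}$ that the squaring clears).

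The core of the computation is the specialization of $y$ and $z$. For \eqref{S111WPWBL}, \eqref{E111WPWBL} and \eqref{JS111WPWBL} I would let $y,z\to\infty$; for \eqref{S111WPSSBL} I would set $z=-1$ and let $y\to\infty$; and for \eqref{S111WPTBL}, \eqref{E111WPTBL} and \eqref{JS111WPTBL} I would set $z=-q$ (in base $q^2$) and let $y\to\infty$. In each case one uses the asymptotics $(y;q)_n\sim(-y)^nq^{\binom{n}{2}}$ as $y\to\infty$ together with $(qk/y;q)_n,(qa/y;q)_n\to 1$, so that the $y$-dependence cancels against the factor $(qa/yz)^n$ and leaves a clean power of $q$; combining this power with the exponent carried by the relevant $\wpbeta$ reproduces exactly the $q$-power and the finite products $(-1;q)_n/(-kq;q)_n$ or $(-q;q^2)_n/(-kq;q^2)_n$ displayed on the left-hand sides. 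Checking that these powers and partial products match each statement is pure bookkeeping, but it must be carried out for all seven cases.

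On the right-hand side the same limits turn the infinite-product prefactor of \eqref{wpeq} into a residual quotient such as $\frac{(kq;q)_\infty}{(q;q)_\infty}$ (times whatever survives from the $z$-factor), and turn the $\wpalpha$-sum into $\sum_n a^nq^{n^2}\wpalpha_n$ evaluated at $a=1$. The pivotal step is the $a\to1$ limit of $\wpalpha_n^{(1,1,1)}$ and its tilde/bar analogues: the ratio $(a;q)_n/(1-a)$ is indeterminate at $a=1$ but equals the finite $(aq;q)_{n-1}$, producing a symmetry factor $(1+q^{n})$ (respectively $(1+q^{2n})$ in base $q^2$, or the constant $2$ for the Euler pair, where $(a^2;q^2)_n/(1-a)$ contributes $1+a$). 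This factor is precisely what is needed to fold the one-sided sum $\sum_{n\geq 0}$ into a bilateral theta series $\sum_{n=-\infty}^{\infty}$, which I would then evaluate by Euler's pentagonal number theorem $\sum_n(-1)^nq^{n(3n-1)/2}=(q;q)_\infty$ for \eqref{S111WPWBL}, and by the Jacobi triple product \eqref{JTP} in all the remaining cases.

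I expect the main obstacle to be not any single deep step but the final reconciliation of products: after the theta sum is evaluated one is left with an expression such as $\frac{(kq;q)_\infty(-q;q)_\infty}{(-kq;q)_\infty(q;q)_\infty}(q;q^2)_\infty^2(q^2;q^2)_\infty$, and one must invoke the elementary Euler identities $(q;q)_\infty=(q;q^2)_\infty(q^2;q^2)_\infty$ and $(-q;q)_\infty=1/(q;q^2)_\infty$ (and their $q\mapsto q^2$ analogues) to collapse this to the claimed quotient, here $\frac{(kq;q)_\infty}{(-kq;q)_\infty}$. Keeping these base-$q$ versus base-$q^2$ product manipulations straight, and confirming the $q$-power bookkeeping on the left for each specialization, is where the real care is needed; the transformation \eqref{wpeq} and the two classical summations do all of the conceptual work.
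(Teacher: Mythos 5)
Your proposal is correct and follows essentially the same route as the paper: insert each of the three $(1,1,1)$ WP-multiparameter pairs into \eqref{wpeq} with $a=1$ and the indicated specializations of $y$ and $z$ (replacing $q$ by $q^2$ where square roots appear), evaluate the resulting $\wpalpha$-sum by the Jacobi triple product or the pentagonal number theorem after the $a\to1$ limit produces the $(1+q^n)$ (or constant $2$) symmetry factor, and then reconcile the infinite products exactly as you describe. The only divergence is in \eqref{S111WPSSBL}, where you take $z=-1$, $y\to\infty$ while the paper prescribes $y=-q$, $z\to\infty$; your choice is in fact the one that reproduces the identity precisely as stated, so this is a point in your favor rather than a gap.
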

\begin{proof} To obtain~\eqref{S111WPWBL},
insert~\eqref{S111alpha}--\eqref{S111beta} into~\eqref{wpeq} with  $a=1$ and $y,z\to\infty$.
To obtain~\eqref{S111WPTBL}, insert~\eqref{S111alpha}--\eqref{S111beta}
into~\eqref{wpeq} with $a=1$, $y=-\sqrt{q}$ and $z\to\infty$.
To obtain~\eqref{S111WPSSBL}, insert~\eqref{S111alpha}--\eqref{S111beta}
into~\eqref{wpeq} with $a=1$, $y=-{q}$ and $z\to\infty$.
 To obtain~\eqref{E111WPWBL},
insert~\eqref{E111alpha}--\eqref{E111beta} into~\eqref{wpeq} with  $a=1$ and $y,z\to\infty$.
To obtain~\eqref{E111WPTBL}, insert~\eqref{E111alpha}--\eqref{E111beta}
into~\eqref{wpeq} with $a=1$, $y=-\sqrt{q}$ and $z\to\infty$.
 To obtain~\eqref{JS111WPWBL},
insert~\eqref{JS111alpha}--\eqref{JS111beta} into~\eqref{wpeq} with  $a=1$ and $y,z\to\infty$.
To obtain~\eqref{JS111WPTBL}, insert~\eqref{JS111alpha}--\eqref{JS111beta}
into~\eqref{wpeq} with $a=1$, $y=-\sqrt{q}$ and $z\to\infty$.
\end{proof}

\begin{remark}
Setting $k=0$ in~\eqref{E111WPWBL} recovers Eq. (3) of
Slater~\cite{S52}. We had obtained~\eqref{JS111WPWBL}
and~\eqref{JS111WPTBL} earlier via another method (see
Eqs.~\eqref{q1/2pairpr} and~\eqref{q1/2pairpr3}). Setting $k=0$
in~\eqref{JS111WPWBL} recovers Eq. (9) of Slater~\cite{S52}, an
identity originally due to Jackson~\cite[p. 179, 3 lines from
bottom]{J28}. Note that these identities may also be derived as
special cases of \eqref{6phi5eq}.

\end{remark}

If any of $d$, $e$, or $h$ is greater than $1$, then the
representation of the $\beta$ as a finite product times a very-well-poised
${}_{t+1}\phi_t$ will have $t>6$, will thus not be summable.
Accordingly, the WP-Rogers-Ramanujan-Slater type identities
obtained from these will involve double sums.

Let us now consider the case which leads to a generalization of the
first Rogers-Ramanujan identity.

\begin{proposition}
$\Big( \wpalpha_n^{(1,1,2)}(a,k, q), \wpbeta_n^{(1,1,2)}(a,k,q)
\Big)$ is given by {\allowdisplaybreaks
\begin{align}
\wpalpha_n^{(1,1,2)} (a,k,q) &= \frac{ (-1)^n a^n q^{n(3n-1)/2} (1-aq^{2n}) (a;q)_n}{(1-a)(q;q)_n} \label{S112alpha}\\
\wpbeta_n^{(1,1,2)} (a,k,q) &
=  (k;q)_n \sum_{j=0}^n \frac{ (-1)^j k^j q^{\binom j2+nj} (k/a;q)_{n-j} }{(q;q)_j (q;q)_{n-j}  } \label{S112beta}
\end{align}
}
\end{proposition}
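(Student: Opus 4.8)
The plan is to verify that the explicit formulas \eqref{S112alpha} and \eqref{S112beta} are the specializations of the general WP-Bailey pair $\big(\wpalpha_n^{(d,e,h)},\wpbeta_n^{(d,e,h)}\big)$ to $d=e=h=1$ with the outer $q$ kept as $q$ (so that the scaling $a^e\mapsto a$, $q^e\mapsto q$ is trivial). Since $\big(\wpalpha_n^{(d,e,h)},\wpbeta_n^{(d,e,h)}\big)$ has already been established to be a WP-Bailey pair in the discussion following \eqref{JSWPbeta}, it suffices to check two things: first, that formula \eqref{S112alpha} agrees with \eqref{SMBPdef} composed with the trivial identification \eqref{SWPalpha} at $d=e=h=2/2=1$; and second, that \eqref{S112beta} is exactly the sum \eqref{SWPbeta} at $d=e=1$, $h=2$.

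For the $\wpalpha$ formula I would simply substitute $d=e=1$, $h=2$ into \eqref{SMBPdef}. With $d\mid n$ always true and $n/d=n$, $n/e=n$, the prefactor $(-1)^{n/d}a^{(h/d-1)n/e}q^{(h/d-1+1/2d)n^2/e-n/2e}$ becomes $(-1)^n a^{n}q^{(1+1/2)n^2-n/2}=(-1)^n a^n q^{(3n^2-n)/2}$, and the remaining factor $(1-a^{1/e}q^{2n/e})(a^{1/e};q^{d/e})_{n/d}/[(1-a^{1/e})(q^{d/e};q^{d/e})_{n/d}]$ becomes $(1-aq^{2n})(a;q)_n/[(1-a)(q;q)_n]$. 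Collecting these reproduces \eqref{S112alpha} verbatim, so this step is routine bookkeeping with the definition.

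For the $\wpbeta$ formula the cleanest route is to bypass the ${}_{t+1}\phi_t$ representation \eqref{SWPbeta} entirely and instead substitute the now-verified $\wpalpha_j=\wpalpha_j^{(1,1,2)}$ directly into the defining sum \eqref{WPpair}, namely $\wpbeta_n=\sum_{j=0}^n \frac{(k/a;q)_{n-j}(k;q)_{n+j}}{(q;q)_{n-j}(aq;q)_{n+j}}\,\wpalpha_j$. The task is then to simplify $\sum_{j=0}^n \frac{(k/a;q)_{n-j}(k;q)_{n+j}}{(q;q)_{n-j}(aq;q)_{n+j}}\cdot\frac{(-1)^j a^j q^{(3j^2-j)/2}(1-aq^{2j})(a;q)_j}{(1-a)(q;q)_j}$ into the claimed closed form $(k;q)_n\sum_{j=0}^n \frac{(-1)^j k^j q^{\binom j2+nj}(k/a;q)_{n-j}}{(q;q)_j(q;q)_{n-j}}$. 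The key manipulation is to split $(k;q)_{n+j}=(k;q)_n\,(kq^n;q)_j$ and $(aq;q)_{n+j}=(aq;q)_n\,(aq^{n+1};q)_j$, pull out the $(k;q)_n/(aq;q)_n$, and combine the factor $\tfrac{(1-aq^{2j})(a;q)_j}{(1-a)(aq^{n+1};q)_j}$ with $(kq^n;q)_j$; this is precisely the ${}_6\phi_5$-type arrangement, and after recognizing the $j$-sum as a very-well-poised series one applies Jackson's summation \eqref{6phi5eq} (equivalently \cite[(II.21)]{GR04}) to collapse it. The one genuine subtlety is that, unlike the $(1,1,1)$ case, the very-well-poised series here does \emph{not} fully sum to a product because $h=2$ introduces the extra $q$-power weight $q^{\binom j2+nj}$ that is not of ${}_6\phi_5$ shape; so the honest claim is only that the $\wpbeta$ reduces to the stated single $j$-sum, and I would present \eqref{S112beta} as the result of this reindexing rather than as a summed product.

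The main obstacle, therefore, is not any deep identity but the careful tracking of the $q$-powers and Pochhammer splittings so that the $(1-a)$, $(1-aq^{2j})$, and $(a;q)_j$ factors cancel correctly against the denominators to leave exactly $(-1)^j k^j q^{\binom j2+nj}$ in the summand; I expect this to be an exercise in the standard identities $(x;q)_{m+j}=(x;q)_m(xq^m;q)_j$ and the shift $(a;q)_j=(1-a)(aq;q)_{j-1}$, with the $1-aq^{2j}$ telescoping factor being the piece most likely to require attention. If a clean cancellation to \eqref{S112beta} is obtained directly, no appeal to \eqref{6phi5eq} is even needed; otherwise the ${}_6\phi_5$ route provides the fallback.
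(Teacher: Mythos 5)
Your treatment of \eqref{S112alpha} is fine and matches the paper: it is indeed a direct substitution of $d=e=1$, $h=2$ into \eqref{SMBPdef} via \eqref{SWPalpha}, and your bookkeeping of the exponents is correct.

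The $\wpbeta$ half, however, has a genuine gap. The defining sum \eqref{WPpair} with $\wpalpha_j=\wpalpha_j^{(1,1,2)}$ and the claimed closed form \eqref{S112beta} are \emph{not} equal term by term, so no amount of Pochhammer splitting and cancellation will carry one summand into the other. (Check $j=n$: the summand of the defining sum carries $(aq;q)_{2n}$ in its denominator against only $(a;q)_n(1-aq^{2n})/(1-a)$ in its numerator, so it retains nontrivial $a$-dependence beyond what appears in the $j=n$ term of \eqref{S112beta}.) The two expressions are related by a nontrivial series transformation: the defining sum is a limiting case of a very-well-poised ${}_8\phi_7$ (the extra weight $q^{3j^2/2}a^{2j}$ relative to the $(1,1,1)$ case corresponds to two parameters sent to infinity), and the tool that converts it into the ${}_4\phi_3$-type sum \eqref{S112beta} is Watson's $q$-analogue of Whipple's theorem, \cite[Eq.~(III.18)]{GR04} --- which is exactly what the paper invokes. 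Your proposed fallback, Jackson's ${}_6\phi_5$ summation \eqref{6phi5eq}, cannot serve here: as you yourself observe (and as the paper notes for any case with $h>1$), the series is not of ${}_6\phi_5$ shape and is not summable to a product; but it \emph{is} transformable, and naming that transformation is the missing idea. Without it your argument reduces to the assertion that the simplification ``works out,'' which it does not by elementary cancellation alone.
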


\begin{proof}
The $\alpha_n^{(1,1,2)}(a,k,q)$ follows by direct substitution
into~\eqref{SWPalpha}. The $\beta_n^{(1,1,2)}$ $(a,k,q)$ follows by
specializing~\eqref{SWPbeta} and applying Watson's $q$-analog of
Whipple's theorem~\cite[Eq. (III.18)]{GR04}.
\end{proof}

\begin{corollary}[a WP-generalization of the first Rogers-Ramanujan
identity]\label{crr}
\begin{equation*}
\sum_{n,j\geq 0} \frac{  (1-kq^{2n+2j}) q^{n^2 + 3nj + j(5j-1)/2} (-1)^j k^j (k;q)_n (k;q)_{n+j}}{(1-k)
(q;q)_j (q;q)_n }
= \frac{ (kq;q)_\infty }{ (q, q^4; q^5)_\infty }.
\end{equation*}
\end{corollary}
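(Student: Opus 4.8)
The plan is to recognize this corollary as a direct application of Theorem~\ref{6t4}, specialized to the case $a=1$ and $y,z\to\infty$, with the sequence $(\alpha_n)$ taken from the standard Bailey pair \textbf{B1} at \eqref{B1pair}. The structure of the argument mirrors the proof of Corollary~\ref{crr1} very closely, but with the WP multiparameter pair $\bigl(\wpalpha_n^{(1,1,2)},\wpbeta_n^{(1,1,2)}\bigr)$ of the preceding proposition playing the central role instead.

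First I would substitute the WP-Bailey pair from \eqref{S112alpha}--\eqref{S112beta} into Theorem~\ref{6t1}, i.e.\ into \eqref{wpeq}, and then set $a=1$ and let $y,z\to\infty$. Under this limit the factors $(y,z;q)_n/(qk/y,qk/z;q)_n$ together with the prefactor $(qa/yz)^n$ collapse so that the left side of \eqref{wpeq} becomes $\sum_n \frac{(q\sqrt k,-q\sqrt k;q)_n}{(\sqrt k,-\sqrt k;q)_n}q^{n^2}\wpbeta_n^{(1,1,2)}$, and the very-well-poised factor $(q\sqrt k,-q\sqrt k;q)_n/(\sqrt k,-\sqrt k;q)_n$ simplifies to $(1-kq^{2n})/(1-k)$. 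Inserting the explicit double-sum form of $\wpbeta_n^{(1,1,2)}$ from \eqref{S112beta}, reindexing the outer summation variable to incorporate the internal $j$ (so that the $n$ in \eqref{S112beta} becomes $n+j$ in the doubled sum, exactly as in the passage from \eqref{wpeq4} to \eqref{wpeq4B1}), and collecting the powers of $q$ should produce precisely the left-hand side claimed in the corollary, with combined exponent $n^2+3nj+j(5j-1)/2$.

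For the right-hand side I would use that with $\wpalpha_n^{(1,1,2)}=\alpha_n$ independent of $k$, the right side of \eqref{wpeq} with $a=1$, $y,z\to\infty$ reduces to $(kq;q)_\infty\sum_{n\geq0} q^{n^2}\alpha_n^{(1,1,2)}$. The sum $\sum_{n\geq0} q^{n^2}\alpha_n^{(1,1,2)}$ is, up to the normalization of the Bailey machinery, exactly the content of the first Rogers-Ramanujan identity \eqref{RR1}: because $\bigl(\wpalpha_n^{(1,1,2)},\wpbeta_n^{(1,1,2)}\bigr)$ lifts the standard pair \textbf{B1}, the $k=0$ specialization of the whole construction is the very computation Slater used to reach \eqref{RR1}. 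Hence $(kq;q)_\infty$ times that sum evaluates to $(kq;q)_\infty/(q,q^4;q^5)_\infty$, matching the stated product.

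The main obstacle I anticipate is the bookkeeping in the reindexing step: one must verify that the power of $q$ arising from combining the $q^{(n+j)^2}$-type factor on the left of \eqref{wpeq4B1} with the internal exponent $\binom{j}{2}+nj$ (and the accompanying $(-1)^j k^j$ and $q$-Pochhammer factors) collapses cleanly into the single exponent $n^2+3nj+j(5j-1)/2$ after shifting indices, and that the $(k;q)_{n+2j}$ and $(k;q)_n$ factors align correctly. This is the same kind of algebraic consolidation carried out in Corollary~\ref{crr1}, so while somewhat tedious it is routine; the only genuine subtlety is ensuring that no stray $k$-dependence survives on the right-hand side, which is guaranteed precisely because $\wpalpha_n^{(1,1,2)}$ is independent of $k$, per the observation following Theorem~\ref{6t1}.
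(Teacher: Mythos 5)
Your proposal is correct and follows essentially the same route as the paper: insert $\bigl(\wpalpha_n^{(1,1,2)},\wpbeta_n^{(1,1,2)}\bigr)$ into \eqref{wpeq} with $a=1$ and $y,z\to\infty$, interchange/reindex the summation on the left, and evaluate the right side (your appeal to the first Rogers--Ramanujan identity is just the Jacobi triple product evaluation the paper cites, packaged differently). The exponent bookkeeping you flag does check out: $(n+j)^2+\binom{j}{2}+(n+j)j=n^2+3nj+j(5j-1)/2$.
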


\begin{proof}
Insert $\Big( \wpalpha_n^{(1,1,2)}(a,k,q), \wpbeta_n^{(1,1,2)}(a,k,q) \Big)$ into~\eqref{wpeq}
with $a=1$ and $y,z\to\infty$, interchange the order of summation on the left hand side and
apply Jacobi's triple product identity~\eqref{JTP} on the right hand side.
\end{proof}

\begin{corollary}[a WP-generalization of the first G\"ollnitz-Gordon
identity]\label{cgg}
\begin{multline} \label{WPGG}
\sum_{n,j\geq 0} \frac{  (-k)^j(1-kq^{4n+4j}) q^{(n+2j)^2 -j}
(-q;q^2)_{n+j} (k;q^2)_n
(k;q^2)_{n+j}}{ (1-k) (-kq;q^2)_{n+j} (q^2;q^2)_j (q^2;q^2)_n }\\
= \frac{ (kq^2;q^2)_\infty }{ (-kq;q^2)_\infty (q, q^4, q^7; q^8)_\infty }.
\end{multline}
\end{corollary}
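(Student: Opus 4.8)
The plan is to imitate the proof of Corollary~\ref{crr} (the first Rogers--Ramanujan case), but carried out in base $q^2$ with a twisted specialization of the free parameters: the appearance of $(-q;q^2)$, $(k;q^2)$ and $q^{4n+4j}$ on the left of~\eqref{WPGG} signals that the relevant instance of Theorem~\ref{6t1} is the one obtained by replacing $q$ with $q^2$ throughout~\eqref{wpeq} and then setting $y=-q$ rather than $y\to\infty$. First I would insert the very same WP-Bailey pair $\big(\wpalpha_n^{(1,1,2)}(a,k,q),\wpbeta_n^{(1,1,2)}(a,k,q)\big)$ from~\eqref{S112alpha}--\eqref{S112beta}, read in base $q^2$, into~\eqref{wpeq}, and then set $a=1$, let $z\to\infty$ and put $y=-q$.

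On the left-hand side the external very-well-poised factor collapses as usual: $(q^2\sqrt{k},-q^2\sqrt{k};q^2)_m/(\sqrt{k},-\sqrt{k};q^2)_m=(1-kq^{4m})/(1-k)$, the choice $y=-q$ produces exactly $(-q;q^2)_m/(-kq;q^2)_m$ since $q^2k/y=-kq$, and the $z\to\infty$ limit of $(z;q^2)_m(q^2/yz)^m/(q^2k/z;q^2)_m$ contributes the pure power $q^{m^2}$. I would then substitute the explicit double-sum form of $\wpbeta_m^{(1,1,2)}(1,k,q^2)$ from~\eqref{S112beta}, interchange the order of summation, and re-index by $m=n+j$. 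The only real bookkeeping is the exponent: $(n+j)^2$ from the external power combines with $2\binom{j}{2}+2(n+j)j$ coming from $\wpbeta$ to give precisely $(n+2j)^2-j$, while $(-1)^jk^j$ becomes $(-k)^j$; this reproduces the left side of~\eqref{WPGG} verbatim, the prefactor $(k;q^2)_m$ and the factor $(k;q^2)_{m-j}$ splitting into $(k;q^2)_{n+j}$ and $(k;q^2)_n$.

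On the right-hand side the infinite product, after $z\to\infty$, reduces to $(kq^2;q^2)_\infty(-q;q^2)_\infty/\big[(-kq;q^2)_\infty(q^2;q^2)_\infty\big]$, and because $\wpalpha_n^{(1,1,2)}$ is independent of $k$ the remaining series is the $k$-free sum $\sum_{n\geq0}q^{n^2}\wpalpha_n^{(1,1,2)}(1,k,q^2)$. Here I would take the $a\to1$ limit of the very-well-poised $\wpalpha$ carefully, using $(a;q^2)_n(1-aq^{4n})/(1-a)\to(q^2;q^2)_n(1+q^{2n})$, so that $\wpalpha_n^{(1,1,2)}(1,k,q^2)=(-1)^nq^{n(3n-1)}(1+q^{2n})$ for $n\geq1$. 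The terms $(-1)^n(q^{4n^2-n}+q^{4n^2+n})$ then reassemble into the bilateral theta series $\sum_{n=-\infty}^{\infty}(-1)^nq^{4n^2-n}$, which by the Jacobi triple product~\eqref{JTP}, with $q$ replaced by $q^4$ and $x=-q^{-1}$, equals $(q^3,q^5,q^8;q^8)_\infty$.

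It then remains only to simplify the product side. Combining the prefactor with $(q^3,q^5,q^8;q^8)_\infty$ and using $(q;q^2)_\infty(-q;q^2)_\infty=(q^2;q^4)_\infty$ together with the regroupings $(q,q^3,q^5,q^7;q^8)_\infty=(q;q^2)_\infty$ and $(q^4,q^8;q^8)_\infty=(q^4;q^4)_\infty$ shows that $(-q;q^2)_\infty(q^3,q^5,q^8;q^8)_\infty(q,q^4,q^7;q^8)_\infty=(q^2;q^2)_\infty$, which is exactly what turns the right side into $(kq^2;q^2)_\infty/\big[(-kq;q^2)_\infty(q,q^4,q^7;q^8)_\infty\big]$. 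I expect the main obstacle to be organizational rather than conceptual: keeping the three simultaneous specializations ($q\mapsto q^2$, $y=-q$, $z\to\infty$) consistent both in the exponent bookkeeping of the interchanged double sum and in the $a\to1$ limit of the quasi-singular factor $1/(1-a)$ inside $\wpalpha_n^{(1,1,2)}$, and then recognizing that the one-sided series on the right completes to the symmetric theta series demanded by~\eqref{JTP}.
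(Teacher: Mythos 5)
Your proposal is correct and follows essentially the same route as the paper: insert $\big(\wpalpha_n^{(1,1,2)},\wpbeta_n^{(1,1,2)}\big)$ into \eqref{wpeq} with $a=1$, $z\to\infty$ and $y=-\sqrt{q}$ (your choice $y=-q$ after first sending $q\mapsto q^2$ is the same specialization, just with the base change performed at the start rather than at the end), then interchange summation and apply the triple product. Your exponent bookkeeping, the $a\to1$ limit of $\wpalpha$, and the final product simplification all check out.
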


\begin{proof}
Insert $\Big( \wpalpha_n^{(1,1,2)}(a,k,q), \wpbeta_n^{(1,1,2)}(a,k,q) \Big)$ into~\eqref{wpeq}
with $a=1$, $y=-\sqrt{q}$ and $z\to\infty$, interchange the order of summation on the left hand side and
apply Jacobi's triple product identity~\eqref{JTP} on the right hand side.  Finally, replace $q$ by
$q^2$ throughout.
\end{proof}

\begin{remark}
By sending $k\to 0$ in~\eqref{WPGG}, we recover Identity (36) of
Slater \cite{S52}. An equivalent analytic identity was recorded by
Ramanujan in the lost notebook~\cite{AB07}. This identity became
well-known after being interpreted partition theoretically by
G\"ollnitz~\cite{G60} and Gordon~\cite{G65}.
\end{remark}

Many additional WP-analogs of known Rogers-Ramanujan type identities
could easily be derived using the
WP-multiparameter Bailey pairs.  We shall content ourselves here with several examples
where the series expressions are not too complicated.  Each of the following identities can
be proved by inserting an appropriate WP-Bailey pair into a limiting case of~\eqref{wpeq},
and applying Jacobi's triple product identity~\eqref{JTP}.

A WP-generalization of the first Rogers-Selberg mod 7 identity~\cite[p. 339]{R94}; cf.~\cite[Eq. (33)]{S52}:
\begin{multline}
\sum_{n,r\geq 0} \frac{ (1-kq^{4n+4r}) (k;q^2)_{n+2r} }{ (1-k) (q^2;q^2)_r (-q;q)_{2r} (q^2;q^2)_n}
(-1)^n k^n q^{n(3n-1) + 4nr + 2r^2} \\
= \frac{ (kq^2;q^2)_\infty (q^3,q^4,q^7; q^7)_\infty}{ (q^2;q^2)_\infty}.
\end{multline}

A WP-generalization of the Jackson-Slater identity~\cite[p. 170, 5th Eq.]{J28}; cf.~\cite[Eq. (39)]{S52}:
\begin{multline}
\sum_{n,r\geq 0} \frac{ (1-kq^{4n+4r}) (k;q^2)_{n+2r} (kq;q^2)_{n+r} (q;q^2)_r }{ (1-k) (q^2;q^2)_r
(q;q^2)_{n+r} (kq;q^2)_r (q^2;q^2)_n}
(-1)^r  q^{2n^2-n+2nr+r^2} \\
= \frac{ (kq^2;q^2)_\infty (-q^3,-q^5,q^8; q^8)_\infty}{ (q^2;q^2)_\infty}.
\end{multline}

A WP-generalization of Bailey's mod 9 identity~\cite[p. 422, Eq. (1.8)]{B47}, cf.~\cite[Eq. (42)]{S52}:
\begin{multline}
\sum_{n, r\geq 0} \frac{(1-kq^{6n+6r}) (k;q^3)_{n+2r}(q;q)_{3r} }
{(1-k)(q^3;q^3)_n (q^3;q^3)_{2r} (q^3;q^3)_r} (-1)^n k^n q^{3n(3n-1)/2 + 6nr + 3r^2}
\\ = \frac{(kq^3;q^3)_\infty (q^4,q^5,q^9;q^9)_\infty }{ (q^3;q^3)_\infty }.
\end{multline}

A WP-generalization of Rogers's mod 14 identity~\cite[p. 341, Ex. 2]{R94}; cf.~\cite[Eq. (61)]{S52}:
\begin{multline}
\sum_{n,r\geq 0} \frac{ (1-kq^{2n+2r}) (k;q)_{n+2r} }
{ (1-k) (q;q^2)_r (q;q)_r (q;q)_n }  (-1)^n k^n q^{n(3n-1)/2 + 2nr + r^2} \\
= \frac{(kq;q)_\infty (q^6,q^8,q^{14};q^{14})_\infty}{ (q;q)_\infty}.
\end{multline}

Remark: All the of the double-sum identities above, and those in
Corollaries \ref{crr} and \ref{cgg}, are new.

\section{Slater Revisited}
It would interesting to lift all the Bailey pairs found by Slater to
WP-Bailey pairs, but at present we do not have a general method that
will allow us to do this.

 As was noted earlier, it is likely that
finding lifts of the other Bailey pairs will be more difficult, as
experimentation suggests that the sequence $\alpha_n$ will be
dependent on the parameter $k$.

It is hoped that some of the results in the present paper might
interest others in the search for lifts of the remaining Bailey
pairs in the Slater papers \cite{S51, S52}.

 \allowdisplaybreaks{

}


\begin{thebibliography}{99}


%\bibitem{A74}G. E. Andrews,
%\emph{Problem 74-12}. SIAM Review. \textbf{16} (1974) 390.


%\bibitem{A86}G. E. Andrews,
%\emph{$q$-series: their development and application in analysis,
%number theory, combinatorics, physics, and computer algebra}. CBMS
%Regional Conference Series in Mathematics, \textbf{66}. Published
%for the Conference Board of the Mathematical Sciences, Washington,
%DC; by the American Mathematical Society, Providence, RI, 1986.
%xii+130 pp.

\bibitem{A84}
G.E. Andrews, \emph{Multiple series Rogers-Ramanujan type
identities.} Pacific J. Math. \textbf{114} (1984) 267--283.



\bibitem{A01}
G.E. Andrews, \emph{Bailey's transform, lemma, chains and tree.}
Special functions 2000: current perspective and future directions
(Tempe, AZ), 1--22, NATO Sci. Ser. II Math. Phys. Chem.,
\textbf{30}, Kluwer Acad. Publ., Dordrecht, 2001.




\bibitem{AB02}
G.E. Andrews and A. Berkovich, \emph{The WP-Bailey tree and its
implications.}
 J. London Math. Soc. (2) \textbf{66} (2002) 529--549.

\bibitem{AB07}
G.E. Andrews and B.C. Berndt, \emph{Ramanujan's Lost Notebook, part
2}, Springer, to appear.

\bibitem{B47}
W.N. Bailey, \emph{Some Identities in Combinatory Analysis.} Proc.
London Math. Soc. \textbf{49}  (1947) 421--435.

\bibitem{B49}
W.N. Bailey, \emph{Identities of the Rogers-Ramanujan type.} Proc.
London Math. Soc. \textbf{50} (1949) 1--10.



%\bibitem{B81} Bressoud, David,
%\emph{The Rogers--Ramanujan identities. (solution to problem
%74-12)}. SIAM Review. \textbf{23} (1981) 101--104.

\bibitem{B81a}
D.M. Bressoud, \emph{ Some identities for terminating $q$-series.}
Math. Proc. Cambridge Philos. Soc. \textbf{89} (1981) 211--223.

\bibitem{GR04}
G. Gasper and M. Rahman, \emph{Basic hypergeometric series}. With a
foreword by Richard Askey. Second edition. Encyclopedia of
Mathematics and its Applications, 96. Cambridge University Press,
Cambridge, 2004. xxvi+428 pp.

\bibitem{G60} H.~G\"ollnitz,
Einfache Partitionen (unpublished), Diplomarbeit W. S. (1960),\\
G\"ottingen.

\bibitem{G65} B. Gordon,
Some continued fractions of the Rogers-Ramanujan type, \textit{Duke
Math. J.} \textbf{32} (1965) 741--748.

\bibitem{J21}
F.H. Jackson, \emph{Summation of q-hypergeometric series}, Messenger
of Math. \textbf{50} (1921) 101-112.

\bibitem{J28}
F.H. Jackson, \emph{Examples of a generalization of Euler's
transformation for power series}, Messenger of Math. \textbf{57}
(1928) 169--187.


\bibitem{McLZ07a}
J. Mc Laughlin and P. Zimmer, \emph{Some Applications of a
Bailey-type Transformation} - submitted.

\bibitem{McLZ07b}
J. Mc Laughlin and P. Zimmer, \emph{Some Implications of the
WP-Bailey Tree} - to appear.

%\bibitem{PR97}
%P. Paule, A. Riese, \emph{A Mathematica $q$-analogue of
%Zeilberger's algorithm based on an algebraically motivated
%approach to $q$-hypergeometric telescoping.} Special functions,
%$q$-series and related topics (Toronto, ON, 1995), 179--210,
%Fields Inst. Commun., \textbf{14}, Amer. Math. Soc., Providence,
%RI, 1997.

%\bibitem{R03}
%A. Riese, \emph{qMultiSum---a package for proving
%$q$-hypergeometric multiple summation identities.} J. Symbolic
%Comput. \textbf{35} (2003), no. 3, 349--376.


\bibitem{R94}
L.J. Rogers, \emph{Second memoir on the expansion of certain
infinite products}, Proc. London Math. Soc. \textbf{25} (1894)
318--343.

%\bibitem{S03}
%A. V. Sills,
% \emph{Finite Rogers-Ramanujan type identities.}
%Electronic J. Combin. \textbf{10(1)} (2003) \#R13, 1-122.

%\bibitem{S04}
%A. V. Sills,
% \emph{{\tt RRtools}---a Maple package for aiding the
%discovery and proof of finite Rogers-Ramanujan type identities.}
%J. Symbolic Comput. \textbf{37} (2004), no. 4, 415--448.

\bibitem{S07}
A.V. Sills, \emph{Identities of the Rogers-Ramanujan-Slater type},
Int. J. Number Theory \textbf{3} (2007) 293-323.

\bibitem{S94}
U.B. Singh, \emph{A note on a transformation of Bailey.}
 Quart. J. Math. Oxford Ser. (2) \textbf{45} (1994) 111--116.

\bibitem{S51}
L.J. Slater, \emph{A new proof of Rogers's transformations of
infinite series.} Proc. London Math. Soc. (2) \textbf{53} (1951)
460--475.



\bibitem{S52}
L.J. Slater,
 \emph{Further identities of the Rogers-Ramanujan
type},
 Proc. London Math.Soc. \textbf{54} (1952) 147--167.

\end{thebibliography}
\end{document}